 \DeclareMathAlphabet{\mathpzc}{OT1}{pzc}{m}{it}
\newtheorem*{theoremnn}{Theorem~\ref{main theorem}}
\theoremstyle{definition}
 \newtheorem{theorem}{Theorem}[section]
 \newtheorem{lemma}[theorem]{Lemma}
 \newtheorem{proposition}[theorem]{Proposition}
 \newtheorem{corollary}[theorem]{Corollary}
 \newtheorem{definition}[theorem]{Definition}
  \theoremstyle{definition}
 \newtheorem{example}[theorem]{Example}
 \newtheorem{remark}[theorem]{Remark}
\newcommand{\id}{\mathrm{id}}
\newcommand{\End}{\textrm{End}}
\newcommand{\Hom}{\mathsf{Hom}}
\newcommand{\Path}{\mathsf{P}}
\newcommand{\Chen}{\mathsf{C}}
\newcommand{\B}{\mathsf{B}}
\newcommand{\dgcc}{\mathsf{DGCC}}
\newcommand{\dgc}{\mathsf{DGC}}
\newcommand{\dga}{\mathsf{DGA}}
\newcommand{\Linfty}{\mathsf{L}_{\infty}}
\newcommand{\Ainfty}{\mathsf{A}_{\infty}}
\newcommand{\dgla}{\mathsf{DGLA}}
\newcommand{\CE}{\mathsf{CE}}
\newcommand{\Lie}{\mathsf{L}}
\newcommand{\U}{\mathbb{U}}
\newcommand{\Uinfty}{\mathbb{U}_{\infty}}
\newcommand{\T}{\mathsf{T}}
\newcommand{\LF}{\mathsf{L}}
\newcommand{\Sym}{\mathsf{S}}
\newcommand{\ST}{\mathbb{S}}
\newcommand{\s}{\mathsf{s}}
\newcommand{\CG}{\mathsf{CG}}
\newcommand{\td}{\mathsf{t}}
\newcommand{\us}{\mathsf{u}}
\newcommand{\SW}{\mathsf{SW}}
\newcommand{\Graphs}{\mathsf{Graphs}}
\newcommand{\g}{\mathfrak{g}}
\newcommand{\Conf}{\mathsf{Conf}}
\newcommand{\FM}{\mathsf{FM}}
\newcommand{\hol}{\mathsf{hol}}
\newcommand{\holinfty}{\mathsf{hol}^{\infty}}
\newcommand{\hatUinfty}{\hat{\mathbb{U}}_{\infty}}
\def\tetra{\draw(A) -- (B) -- (C) -- (D) -- cycle; \draw[shade](A)--(B)--(D); \draw[shade](B)--(C)--(D);\draw(B)--(D); \draw[dotted](A)--(C);\draw[dotted](A)--(C); \draw(A)--(D);}
\def\tetraA{\draw(A) -- (B) -- (C) -- (D) -- cycle; \draw[shade=gray](B)--(C)--(D);\draw[thin](B)--(D); \draw[dotted](A)--(C);\draw[dotted](A)--(C); \draw[thick](A)--(B);}
\def\tetraB{\draw(A) -- (B) -- (C) -- (D) -- cycle; \shade(A)--(C)--(D);\draw(B)--(D); \draw(A)--(D); \draw[dotted](A)--(C);\draw(C)--(D);}
\def\tetraC{\draw(A) -- (B) -- (C) -- (D) -- cycle; \draw[shade=gray](A)--(B)--(D);\draw(B)--(D); \draw[dotted](A)--(C); \draw[dotted](A)--(C); \draw(A)--(D);\draw[thick](D)--(C);}
\def\tetraD{\draw(A) -- (B) -- (C) -- (D) -- cycle; \draw[shade=gray](A)--(B)--(C);\draw(B)--(D); \draw[dotted](A)--(C);\draw[dotted](A)--(C);\draw(A)--(D);  }
\def\triangle{\draw(A) -- (B) -- (C)--cycle; \draw[shade](A)--(B)--(C);\draw(A)--(C);}
\def\triangleA{\draw(A) -- (B) -- (C)  -- cycle;\draw[ultra thick](A)--(B);\draw[ultra thick](C)--(B);}
 \def\triangleB{\draw(A) -- (B) -- (C)  -- cycle;\draw[ultra thick](A)--(C);}
\def\line{\draw[ultra thick](A) -- (B);\fill[ultra thick](A);}
\def\bleft{\draw(X) -- (Y) -- (Z)--(W);}
\def\bright{\draw(x) -- (y) -- (z)--(w);}
\begin{document}

\vspace{15cm}
 \title{Holonomies for connections with values in $L_\infty$-algebras}
\author{Camilo Arias Abad\footnote{Max Planck Institut f\"ur Mathematik, Bonn.
camiloariasabad@gmail.com. Partially supported
by SNF Grant 200020-131813/1 and a Humboldt Fellowship. } \hspace{0cm} and
Florian Sch\"atz\footnote{Centre for Quantum Geometry of Moduli Spaces, Ny Munkegade 118,
DK-8000 Aarhus C, Denmark. florian.schaetz@gmail.com. Supported by ERC Starting Grant no. 279729 and the Danish National Research Foundation grant DNRF95 (Centre for Quantum Geometry of Moduli Spaces - QGM).}}

 \maketitle
 \begin{abstract}

Given a flat connection $\alpha$ on a manifold $M$ with values in a filtered $L_\infty$-algebra $\g$, we construct a
morphism $\holinfty_\alpha:C_\bullet(M)\rightarrow \B \hat{\U}_\infty(\g)$, generalizing the holonomies of flat connections with values in Lie algebras.
The construction is based on Gugenheim's $\Ainfty$-version of de Rham's theorem, which in turn is based on Chen's iterated integrals.
Finally, we discuss examples related to the geometry of configuration spaces of points in Euclidean space $\mathbb{R}^d$, and to generalizations of the holonomy representations of braid groups.
\end{abstract}

\tableofcontents

\section{Introduction}\label{s:intro}

In this note we propose an answer to the following question: Assume that
$M$ is a smooth manifold, $\g$ an $L_\infty$-algebra and $\alpha$ a flat connection on $M$ with values in $\g$, i.e.,\ a Maurer--Cartan element of the $L_\infty$-algebra
$\g \hat{\otimes} \Omega(M)$; what are the holonomies associated to the flat connection $\alpha$? 
Our answer differs from those that have appeared in the literature, such as
\cite{Picken,SatiSchreiberStasheff,SchreiberWaldorf,Tradleral,Yekutieli}, where various notions of two-dimensional parallel transport
are considered.
In order to motivate our answer, let us first discuss the case where $\g=\End V$ is the Lie algebra of endomorphisms
of a finite-dimensional vector space. In this case, $\alpha$ is just a flat
connection on the trivial vector bundle $V$, and by solving the
differential equation for parallel transport, one obtains the holonomy
$\hat{\hol}(\sigma)\in \End V\subset \U(\End V)$ associated to a path
$\sigma\colon I \rightarrow M$. One can view this whole assignment as an element $\hat{\hol}$ of $\End V \otimes C^{\bullet}(M)$,
the differential graded algebra of $\End V$-valued smooth singular cochains on $M$. The flatness of $\alpha $ implies the homotopy invariance of the holonomy. This corresponds to the
fact that $\hat{\hol}$ is a Maurer--Cartan element. Indeed, an element
$\beta \in \End V \otimes C^1(M)$ is a Maurer--Cartan element precisely if
it is homotopy invariant in the sense that for any two-dimensional simplex one has
\begin{center}
\begin{tikzpicture}[scale=1]
    \coordinate (A) at (-1,-0.5);
    \coordinate (B) at (0,-0.5);
    \coordinate (C) at (0,0.5);
    \coordinate (X) at (0,0.5);
    \coordinate (Y) at (-0.1,0.5);
    \coordinate (Z) at (-0.1,-0.5);
    \coordinate (W) at (0,-0.5);
      \coordinate (x) at (-0.5,0.5);
    \coordinate (y) at (-0.4,0.5);
    \coordinate (z) at (-0.4,-0.5);
     \coordinate (w) at (-0.5,-0.5);

     \matrix[column sep=0.8cm,row sep=0.5cm]
{
\triangleA &node[$-$]& \triangleB&   node{$=$}&&node{$0$}node[$.$]\\
};
\end{tikzpicture}
\end{center}
Here the bold edges represent holonomies associated to the corresponding paths, and concatenation of paths corresponds
to multiplication in the algebra $\End V$. Observe that a Maurer--Cartan element of $
\End V \otimes C^{\bullet}(M)$ corresponds naturally to a morphism of differential graded coalgebras
$C_\bullet(M)\rightarrow \B(\End V)$.

Using the explicit iterated integral formulas for the parallel transport, one can show that this morphism factors through the bar coalgebra of the (completed) universal enveloping algebra of $\End V$:
\begin{align*}
\xymatrix{
C_{\bullet}(M) \ar[r]^{{\hol_\alpha}\,\,} \ar[rd]&\B \hat{\U}(\End V)\ar[d]^p\\
& \B(\End V).
}
\end{align*}

This construction works for any filtered Lie algebra $\mathfrak{g}$, and we conclude that the holonomies of a flat connection with values in
$\g$ can be interpreted as a morphism of differential graded coalgebras
$\hol_{\alpha}\colon C_{\bullet}(M)\rightarrow \B \hat{\U}(\g),$ where $\B\hat{\U}(\g)$ denotes the bar construction of the completion of the universal enveloping algebra $\U(\g)$.

The case where the $L_\infty$-algebra $\g$ is the graded Lie algebra of
endomorphisms of a graded vector space $V$ corresponds to holonomies of
flat $\mathbb{Z}$-graded connections. This has been studied recently by
Igusa \cite{I}, Block and Smith \cite{BS}, and Arias Abad and Sch\"atz \cite{AS}, and ultimately relies on Gugenheim's \cite{G} $\Ainfty$-version of de Rham's theorem. In turn, Gugenheim's construction is based on Chen's theory of iterated integrals \cite{Chen}.
We extend this approach to flat connections with values in $L_\infty$-algebras. The holonomy of $\alpha$ is a morphism of differential graded coalgebras
$\hol^\infty_\alpha\colon C_\bullet(M) \rightarrow \B
\hat{\U}_\infty(\g)$.\footnote{Throughout the introduction, we gloss over the technical issue that one has to work with the completed bar complex $\hat{\B}\hatUinfty(\g)$ of $\hatUinfty(\g)$, which is not a differential graded
coalgebra, because its ``comultiplication'' does not map into the tensor product, but into the completion. See Appendix~\ref{section:twisting_cochains} for details.}

We first need to explain what the universal enveloping algebra $\Uinfty(\g)$ of an $L_\infty$-algebra $\g$ is.
Several proposals  for a definition of the enveloping algebra of an
$L_\infty$-algebra exist in the literature, e.g.,\ \cite{Rossi,B, LM}.
Following \cite{B}, we use the idea of defining the enveloping algebra via
the strictification $\mathbb{S}(\g)$ of the $L_\infty$-algebra $\g$. The
differential graded Lie algebra $\mathbb{S}(\g)$ is naturally
quasi-isomomorhic to $\g$, and we define the enveloping algebra of $\g$ to
be that of its strictification. Our main result is as follows:

\begin{theoremnn}

Suppose that $\alpha$ is a flat connection on $M$ with values in a filtered $L_\infty$-algebra  $\mathfrak{g}$. Then there is a natural homomorphism of differential graded coalgebras
\[\hol^\infty_\alpha\colon C_\bullet(M) \rightarrow \B\hat {\mathbb{U}}_\infty(\g).\]

\end{theoremnn}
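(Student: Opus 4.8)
The plan is to reduce the problem, by a sequence of natural constructions, from a flat connection with values in the $L_\infty$-algebra $\g$ to a flat connection with values in an ordinary differential graded \emph{associative} algebra, and then to appeal to Chen's iterated integrals in the form of Gugenheim's $\Ainfty$-de Rham theorem. I would carry this out in three steps, followed by a reinterpretation in terms of twisting cochains.

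\emph{Steps 1 and 2 (algebraic preparation).} First I would replace $\g$ by its strictification $\ST(\g)$: this is a filtered differential graded Lie algebra which, as recalled above, is $L_\infty$-quasi-isomorphic to $\g$. Tensoring an $L_\infty$-morphism $\g \rightsquigarrow \ST(\g)$ with the commutative differential graded algebra $\Omega(M)$ produces an $L_\infty$-morphism $\g \hat{\otimes}\Omega(M) \rightsquigarrow \ST(\g)\hat{\otimes}\Omega(M)$, and since $L_\infty$-morphisms act on Maurer--Cartan elements --- the relevant infinite sums converging because the filtration on $\g$ makes $\g\hat{\otimes}\Omega(M)$ pro-nilpotent --- the flat connection $\alpha$ is carried to a Maurer--Cartan element $\bar\alpha$ of the differential graded Lie algebra $\ST(\g)\hat{\otimes}\Omega(M)$. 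Next I would push $\bar\alpha$ forward along the canonical inclusion $\ST(\g)\hookrightarrow \U(\ST(\g)) =: \Uinfty(\g)$, which sends the bracket to the commutator, and complete with respect to the induced filtration; this gives a Maurer--Cartan element $\tilde\alpha$ of the differential graded \emph{associative} algebra $\hatUinfty(\g)\hat{\otimes}\Omega(M)$, i.e.\ a flat connection on $M$ with values in the ordinary algebra $\hatUinfty(\g)$.

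\emph{Step 3 (iterated integrals) and conclusion.} Now I would invoke Gugenheim's theorem: Chen's iterated integrals assemble into an $\Ainfty$-morphism $\Phi\colon \Omega(M) \rightsquigarrow C^\bullet(M)$ of differential graded algebras whose linear component is integration of forms over smooth singular chains. Because $\Phi$ acts factor by factor and never permutes tensor factors, $\id\otimes\Phi$ is again an $\Ainfty$-morphism of differential graded algebras $\hatUinfty(\g)\hat{\otimes}\Omega(M) \rightsquigarrow \hatUinfty(\g)\hat{\otimes}C^\bullet(M)$, notwithstanding the noncommutativity of $\hatUinfty(\g)$; pushing $\tilde\alpha$ forward along it yields a Maurer--Cartan element of $\hatUinfty(\g)\hat{\otimes}C^\bullet(M)$. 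I would then identify the latter with the convolution differential graded algebra $\Hom(C_\bullet(M),\hatUinfty(\g))$ --- cup product on cochains times product on $\hatUinfty(\g)$, built from the Alexander--Whitney diagonal --- under which a Maurer--Cartan element is exactly a twisting cochain, and twisting cochains $C_\bullet(M)\to\hatUinfty(\g)$ are the same data as morphisms of differential graded coalgebras $C_\bullet(M)\to\B\hatUinfty(\g)$. This morphism is $\holinfty_\alpha$; on a smooth simplex it is the expected iterated-integral series in $\tilde\alpha$, whose $k$-th term lies in the $k$-th level of the filtration and therefore converges. Naturality in $M$ and in $\g$ is inherited from the naturality of strictification, of the universal enveloping algebra, and of Gugenheim's map.

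I expect Step 3 to be the main obstacle. Beyond citing Gugenheim's theorem itself, it requires careful bookkeeping of Koszul signs when interleaving $\hatUinfty(\g)$-factors with form- and cochain-factors, a convergence argument based squarely on the filtration, and --- as already flagged in the introduction --- the technically delicate point that one must work with the \emph{completed} bar complex $\hat{\B}\hatUinfty(\g)$, whose ``coproduct'' lands only in the completed tensor product and which is therefore not literally a differential graded coalgebra; making the construction precise at this level of generality is the content of Appendix~\ref{section:twisting_cochains}.
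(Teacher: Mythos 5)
Your proposal is correct and follows essentially the same route as the paper: strictify $\g$ via $\eta\colon\g\to\ST(\g)$, push the Maurer--Cartan element into $\Uinfty(\g)\hat{\otimes}\Omega(M)$, apply Gugenheim's $\Ainfty$-de Rham morphism to land in $\Uinfty(\g)\hat{\otimes}C^{\bullet}(M)$, and reinterpret the resulting Maurer--Cartan element as a twisting cochain on $C_\bullet(M)$, hence as a coalgebra morphism into the completed bar complex $\hat{\B}\hatUinfty(\g)$. The only cosmetic difference is that the paper factors your Step 2 through the enveloping algebra $\hat{\U}(\ST(\g)\otimes\Omega(M))$ and the coefficient map $\tau$ of Lemma~\ref{lemma:coefficients}, which is just a precise packaging of your ``tensor the inclusion $\ST(\g)\hookrightarrow\U(\ST(\g))$ with $\id_{\Omega(M)}$''.
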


In order for this notion of holonomy to be reasonable, it should be consistent with the standard definition in the
case of Lie algebras. Indeed, in the case where $\g$ is a Lie algebra, the usual parallel transport provides a
holonomy map:
\[{\hol}\colon C_\bullet(M)\rightarrow \B \hat{\U}(\g).\]
On the other hand, there is a natural map of differential graded coalgebras

\[\B \hat{\U}(\rho)\colon \B \hat{\U}_\infty (\g)\rightarrow \B \hat{\U}(\g),\]
and the following diagram commutes:

\begin{align*}
\xymatrix{
 C_\bullet(M)  \ar[r]^{\hol^\infty_\alpha} \ar[rd]_{{\hol}_\alpha}& \B \hat{\U}_\infty (\g) \ar[d]^{\B \hat{\U}(\rho)}\\
 &\B \hat{\U}(\g).
}
\end{align*}

The notion of holonomy on which Theorem~\ref{main theorem} is based admits
a rather visual description. Given any filtered differential graded algebra
$(\mathsf{A},\partial)$, a morphism of differential graded coalgebras
$\phi\colon  C_\bullet(M)\rightarrow \B \hat{A}$ corresponds to a Maurer--Cartan  element $\overline{\phi}$ in the algebra
$A \hat{\otimes} C^{\bullet}(M)$, which is an element in the vector space $\Hom(C_\bullet(M), A)$. Thus, $\phi$ can be interpreted as a rule that assigns to each simplex in $M$ an element of the algebra $\hat{A}$, which we think of as being the holonomy associated to that simplex.

Since the algebra $A\hat{\otimes} C^{\bullet}(M)$ is bigraded, the
condition for $\overline{\phi}$ to be Maurer--Cartan decomposes into a sequence of equations.
In degree~0, the condition is that $\phi$ assigns to every point $p \in
M$ a Maurer--Cartan element of $\hat{A}$.  This implies that if we set $\partial_p:=\partial+[\phi(p),\_],$ then $\partial_p \circ \partial_p=0$.
Let us denote the complex $(A,\partial_p)$ by $A_p$.
Given a simplex $\sigma: \Delta_k \to M$, we denote
the commutator between the operation of multiplying by $\phi(\sigma)$ and of applying the differentials associated
to the first and last vertex of $\sigma$ by $[\partial, \phi(\sigma)]$, i.e.,
$[\partial, \phi(\sigma)]:= \partial_{v_k} \circ \phi(\sigma)-(-1)^{1+|\sigma|} \phi(\sigma)\circ \partial_{v_0}$.

The Maurer--Cartan equation in degree $1$ is
\begin{center}
\begin{tikzpicture}[scale=1]
    \coordinate (A) at (-0.5,0);
    \coordinate (B) at (0.5,0);
    \coordinate (X) at (0,0.5);
    \coordinate (Y) at (-0.1,0.5);
    \coordinate (Z) at (-0.1,-0.5);
    \coordinate (W) at (0,-0.5);
     \coordinate (x) at (-0.5,0.5);
    \coordinate (y) at (-0.4,0.5);
    \coordinate (z) at (-0.4,-0.5);
     \coordinate (w) at (-0.5,-0.5);

\matrix[column sep=0.8cm,row sep=0.5cm]
{
\bleft &node{$\partial ,$} & \line &\bright & node{$=$}&&node{$0\quad ,$}\\
};
\end{tikzpicture}
\end{center}
\noindent
which says that multiplication by the holonomy associated to a path is an isomorphism between the complexes $A_{v_0}$ and $A_{v_1}$.
The equation in degree~2 reads

\begin{center}
\begin{tikzpicture}[scale=1]
    \coordinate (A) at (-1,-0.5);
    \coordinate (B) at (0,-0.5);
    \coordinate (C) at (0,0.5);
    \coordinate (X) at (0,0.5);
    \coordinate (Y) at (-0.1,0.5);
    \coordinate (Z) at (-0.1,-0.5);
    \coordinate (W) at (0,-0.5);
      \coordinate (x) at (-0.5,0.5);
    \coordinate (y) at (-0.4,0.5);
    \coordinate (z) at (-0.4,-0.5);
     \coordinate (w) at (-0.5,-0.5);

     \matrix[column sep=0.8cm,row sep=0.5cm]
{
\bleft & node{$\partial ,$} &\triangle&\bright & node{$=$}&\triangleA &node[$-$]& \triangleB&  node[$,$]\\
};
\end{tikzpicture}
\end{center}
\noindent
requiring that the two isomorphisms between the complexes $A_{v_0}$ and $A_{v_2}$ are homotopic, with a specified homotopy given by the holonomy associated to the triangle.

Similarly, for the tetrahedron one obtains

\begin{tikzpicture}[scale=1]
    \coordinate (A) at (-0.8,-0.3);
    \coordinate (B) at (0,-0.5);
    \coordinate (D) at (-0.1,0.5);
    \coordinate (C) at (0.5,-0.1);
     \coordinate (X) at (0,0.5);
    \coordinate (Y) at (-0.1,0.5);
    \coordinate (Z) at (-0.1,-0.5);
    \coordinate (W) at (0,-0.5);
     \coordinate (x) at (-0.5,0.5);
    \coordinate (y) at (-0.4,0.5);
    \coordinate (z) at (-0.4,-0.5);
     \coordinate (w) at (-0.5,-0.5);

\matrix[column sep=0.8cm,row sep=0.5cm]
{
\bleft & node{$\partial ,$} &\tetra &\bright  node{$=$} 
\\ && \tetraA &node[$-$]& \tetraC & node{$+$}& \tetraD& node{$-$}& \tetraB &  node[$.$]\\
};
\end{tikzpicture}

Our main motivation to develop this version of parallel transport is the
appearance of certain flat connections on configuration spaces $\Conf_d(n)$ of $n$ points in Euclidean space $\mathbb{R}^d$. In dimension $d=2$ these connections were introduced and studied by \v{S}evera and Willwacher in \cite{SW}. There, the flat connections mentioned above yield a homotopy between the formality maps for the little disks operad of Kontsevich \cite{K} and Tamarkin \cite{T}, respectively, provided that in the second one the Alekseev--Torossian associator is used. 

In Section 5, we discuss these connections on configuration spaces. We first explain a link between rational homotopy theory and the theory of flat connections with values in $L_\infty$-algebras.
We then describe Kontsevich's model $^*\Graphs_d(n)$ of $\Conf_d(n)$ and the corresponding flat connections $\SW_d(n)$, extending the construction of \v{S}evera and Willwacher to higher dimensions. Finally, we demonstrate how to use this machinery to construct actions of the $\infty$-groupoid of $\Conf_d(n)$ on representations of quadratic differential graded Lie algebras, generalizing the holonomy representations of the braid groups.

\subsection*{Acknowledgements}We thank Alberto Cattaneo, Ya\"el Fr\'egier, Pavol \v{S}evera, and Thomas Willwacher for several helpful conversations related to this project. Moreover, we thank Carlo Rossi for making available his unpublished work with J. Alm \cite{Rossi}. We would also like to thank Utrecht University (C.A.A.) and the University Zurich (F.S.) for their hospitality. Finally, we are grateful to James D. Stasheff, the editor and the referees for their careful revisions and useful comments.

\section{The universal enveloping algebra}

\subsection{Basic definitions}\label{subsection:basic_definitions}
In order to fix notations and conventions, we review the definitions of some functors
and collect relevant facts. We essentially follow \cite{F}.

\begin{definition}
Let $V$ be a graded vector space.
The suspension of $V$, denoted $\s V$, is the graded vector space
$(\s V)^k:=V^{k+1}$.
The desuspension of $V$, denoted $\us V$, is the graded vector space
$(\us V)^k:=V^{k-1}$.
\end{definition}

\begin{definition}
We will make use of the following categories:
\begin{itemize}
 \item The category $\dga_{(a)}$ of (augmented) differential graded algebras
 \item The category $\dgc_{(a)}$ of (co-augmented) differential graded coalgebras
 \item The category $\dgcc_{(a)}$ of (co-augmented) cocommutative differential graded coalgebras
 \item The category $\dgla$ of differential graded Lie algebras
 \item The category $\Linfty$ of $L_\infty$-algebras
\end{itemize}
For the relevant definitions please see \cite{LM,Getzler}.
\end{definition}

\begin{remark}
We will assume that differential graded algebras and differential coalgebras are unital and co-unital, respectively. 
\end{remark}

\begin{definition}
The symmetric coalgebra $\Sym(V)$ of a graded vector space $V$ is the subspace of elements in the tensor coalgebra $\T V$
that are invariant under the action by $\Sigma_{\bullet}$, i.e.,\ the collection of actions of $\Sigma_n$ on $\T^n V$ defined by
$$ \Sigma_n \times \T^n V \to \T^n V, \quad \sigma\bullet (x_1\otimes \cdots \otimes x_n) := (-1)^{|\sigma|} x_{\sigma(1)}\otimes \cdots \otimes x_{\sigma(n)},$$
for $x_1,\dots,x_n \in V$ homogeneous. Here $(-1)^{|\sigma|}$ refers to the
Koszul sign, which is the character of the representation of
$\Sigma_n$ on $\T^n V$ determined by
$$\big( \cdots \otimes x_k \otimes x_{k+1} \otimes \cdots  \mapsto  \cdots \otimes x_{k+1} \otimes x_{k} \otimes \cdots \big) \quad \mapsto \quad (-1)^{|x_k||x_{k+1}|}.$$
There is a natural projection $p: \T V \to \Sym(V)$ given by
$$p(x_1\otimes \cdots \otimes x_n) := \frac{1}{n!} \sum_{\sigma \in \Sigma_n}(-1)^{\sigma}x_{\sigma(1)}\otimes \cdots \otimes x_{\sigma(n)}.$$
The coproduct $\Delta: \T V \to \T V\otimes \T V $, defined via
$$ \Delta(x_1\otimes \cdots \otimes x_n) := \sum_{k=0}^{n} (x_1\otimes \cdots \otimes x_k)\otimes (x_{k+1}\otimes \cdots \otimes x_n),$$
restricts to a graded commutative coproduct on $\Sym(V)$, which we also denote by $\Delta$.
\end{definition}

\begin{definition}
The  Chevalley--Eilenberg functor
$ \CE: \Linfty \to \dgcc_a$
is defined as follows:
\begin{enumerate}
 \item To an $L_{\infty}$-algebra $\g$, the functor $\CE$ associates the co-augmented differential graded cocommutative coalgebra
$(\CE(\g),\delta_g,\Delta)$, where:
\begin{enumerate}
 \item $\CE(\g)$ is the symmetric coalgebra $\Sym(\s\g)$ of the suspension $\s\g$ of $\g$.
The co-unit and co-agumentation are given by the identification $\Sym^0(\s \g)\cong \mathbb{R}$.
 \item The differential $\delta_g$ on $\CE(\g)$ is obtained from the $L_\infty$-structure on $\g$ via the identification $\mathrm{Coder}(\Sym(\s \g))\cong \Hom(\Sym(\s \g),\s \g)$.
\end{enumerate}
\item A morphism of $L_\infty$-algebras $f:\g \to \mathfrak{h}$ is a morphism of differential graded coalgebras
$\CE(f): \CE(\g)\to \CE(\mathfrak{h})$.
\end{enumerate}
\end{definition}

\begin{definition}
The  universal enveloping algebra functor $\U\colon \dgla\hspace{-1pt} \rightarrow\hspace{-1pt} \dga$
is defined as follows:
\begin{enumerate}
 \item To a differential graded Lie algebra $(\g,d,[\cdot,\cdot])$, the
 functor $\U$ associates the differential graded algebra $(\U(\g),d_\U)$, where
\begin{enumerate}
\item $\U(\g)$ is the quotient of the tensor algebra $\T \g$ by the two-sided ideal generated by elements of the form $x\otimes y - (-1)^{|x||y|}y\otimes x - [x,y]$.
\item The differential $d_\U$ on $\U(\g)$ is inherited from  $d_\T: \T \g \to \T \g$, where
 \begin{eqnarray*}
 & d_\T(x_1\otimes \cdots \otimes x_n) & := \\
 & \sum_{i=1}^n(-1)^{|x_1|+\cdots + |x_{i-1}|}&x_1\otimes \cdots \otimes x_{i-1}\otimes dx_i\otimes x_{i+1}\otimes \cdots \otimes x_n.
 \end{eqnarray*}
\end{enumerate}
\item To a morphism $f:\g\to \mathfrak{h}$ of differential graded Lie algebras, the functor $\U$ associates the morphism
$ \U(f): \U(\g) \to \U(\mathfrak{h})$
induced by
$$ \T(f): \T \g \to \T \mathfrak{h}, \quad \T(f)(x_1\otimes \cdots\otimes x_n) := f(x_1)\otimes \cdots \otimes f(x_n).$$
\end{enumerate}

The (anti)symmetrization functor
$\Sigma: \dga \to \dgla$
maps $(A,d,\cdot)$ to the differential graded Lie algebra $\Sigma A$, whose underlying complex is $(A,d)$
and whose Lie bracket is defined by setting $ [x,y] := x\cdot y - (-1)^{|x||y|} y \cdot x$.
The functor $\Sigma: \dga \to \dgla$ is right adjoint to $\U: \dgla \to \dga$ and $\U$ preserves quasi-isomorphisms.
\end{definition}

\begin{definition}
Let $(C,d,\Delta)$ be a co-augmented differential graded coalgebra.
The {\em reduced coproduct} $\overline{\Delta}$ is defined on the kernel $\overline{C}$ of the co-unit map via
$$\overline{\Delta}(x) := \Delta(x) - x\otimes 1 - 1\otimes x.$$
\end{definition}

\begin{definition}\label{def:cobar}
The  cobar functor
$\Omega: \dgc_a \to \dga_a$
is defined as follows:
\begin{enumerate}
 \item To a co-augmented differential graded coalgebra $(C,d,\Delta)$, the functor $\Omega$ associates the augmented differential graded
algebra $(\Omega(C),\delta,\cdot)$, where:
\begin{enumerate}
\item The underlying augmented graded algebra is the tensor algebra $\T(\us \overline{C})$ of the desuspension $\us \overline{C}$.
\item The differential $\delta$ of $\Omega(C)$ is  determined by
$\delta(\us x) := \us dx + \partial(\us x),$ where $\partial(\us x) = -\sum_i (-1)^{|x_i|} \us x_i\otimes \us y_i$ if $\overline{\Delta}(x) = \sum_i x_i\otimes y_i$.
\end{enumerate}
\item To a morphism $f: C\to D$ of augmented differential graded cocommutative coalgebras, the functor $\Omega$ associates
the morphism
$\Omega(f): \Omega(C)\to \Omega(D)$
induced by $\T(\us f)$.
\end{enumerate}
\end{definition}

\begin{definition}
The  bar functor $\B: \dga_a \to \dgc_a$
is defined as follows:
\begin{enumerate}
\item
To an augmented differential graded algebra $(A,d,\cdot)$, the functor $\B$ associates the co-augmented differential graded coalgebra $(\B(A),\delta,\Delta)$, where:
\begin{enumerate}
\item The underlying augmented graded coalgebra is the tensor coalgebra $\T(\s \underline{A})$ of the suspension $\s \underline{A}$ of the augmentation ideal $\underline{A}$.
\item The differential $\delta$ of $\B(A)$ is the coderivation given by
\begin{eqnarray*}
\delta(\s x_1 \otimes \dots \otimes \s x_k) &:=& -\sum_{i=1}^k (-1)^{n_i}(\s x_1 \otimes \dots \otimes \s dx_i \otimes \dots  \otimes \s x_k)\\
&&+ \sum_{i=2}^k (-1)^{n_i} \s x_1 \otimes \dots \otimes \s (a_{i-1} a_i) \otimes\dots \otimes  \s x_k,
\end{eqnarray*}
where $n_i:=|\s x_1|+\dots +|\s x_{i-1}|$
on homogeneous elements of $\underline{A}$.
\end{enumerate}
\item To a morphism $f: A\to A'$ of augmented differential graded algebras, the functor $\B$ associates
the morphism
$\B f: \B A \to \B A'$
induced by $\T(\s f)$.
\end{enumerate}
\end{definition}

\begin{remark}
In applications the bar complex is not sufficient and it has to be replaced by
the completed bar complex; see Appendix~\ref{section:twisting_cochains} for details.
\end{remark}

\begin{definition}
The  Lie functor
$\LF: \dgcc_a \to \dgla$
is defined as follows:
\begin{enumerate}
 \item To a co-augmented differential graded cocommutative coalgebra $(C,d,\Delta)$, the functor $\LF$ associates the  differential graded
algebra $(\LF(C),\delta,[,])$, where:
\begin{enumerate}
\item The underlying graded Lie algebra is the free graded Lie algebra on the  desuspension $\us \overline{C}$ of $\overline{C}$.
\item The differential $\delta$ on $\LF(C)$ is the Lie derivation determined by
\begin{align*}
\delta(\us x) := \us dx + \partial(\us x) \in \LF(C) \subset \T(\overline{C}),
\end{align*}
on homogeneous elements of $\overline{C}$, where $\partial(\us x) = -\sum_i (-1)^{|x_i|} \us x_i\otimes \us y_i$ if $\overline{\Delta}(x) = \sum_i x_i\otimes y_i$. Note that the cocommutativity of the coproduct guarantees that the right hand side belongs to $\LF(C)$.
\end{enumerate}
\end{enumerate}
\end{definition}

The following theorem  will be essential for our construction.

\begin{theorem}[Quillen \cite{Quillen}, Hinich \cite{Hinich}]\label{Quillen}
The functor $\LF\colon \dgcc_a \rightarrow \dgla$ is left adjoint to
$\CE\colon \dgla \rightarrow \dgcc_a$.
Moreover, the adjunction maps $X\to \CE(\LF(X))$ and $\LF(\CE(\g)) \to \g$ are quasi-isomorphisms.
\end{theorem}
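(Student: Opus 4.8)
The plan is to prove the adjunction formally, via twisting cochains, and then to reduce the statements about the unit and counit to the Koszul duality between the commutative and Lie operads.

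\emph{The adjunction.} Given $(C,d,\Delta)$ in $\dgcc_a$ and $(\g,d,[\,,\,])$ in $\dgla$, the graded vector space $\Hom(\overline C,\g)$ carries a natural differential graded Lie algebra structure, the \emph{convolution Lie algebra}: the differential is assembled from $d_C$ and $d_\g$, and the bracket is $[\varphi,\psi]:=[\,,\,]\circ(\varphi\otimes\psi)\circ\overline\Delta$, where the cocommutativity of $\Delta$ is exactly what forces this to be a graded Lie bracket. Write $\mathrm{Tw}(C,\g)$ for its set of Maurer--Cartan elements. On the one hand, by freeness of $\LF(C)$ on $\us\overline C$, a morphism $\LF(C)\to\g$ in $\dgla$ is the same datum as a linear map $\us\overline C\to\g$, and the requirement that it intertwine the two differentials --- the one on $\LF(C)$ being the Lie derivation built from $d_C$ and $\overline\Delta$ --- is, after unwinding the (de)suspensions, precisely the Maurer--Cartan equation. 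On the other hand, by cofreeness of $\Sym(\s\g)=\CE(\g)$ among coaugmented conilpotent cocommutative coalgebras, a morphism $C\to\CE(\g)$ in $\dgcc_a$ is the same datum as a linear map $\overline C\to\s\g$, and intertwining the differentials --- $\delta_\g$ being the coderivation that encodes the bracket of $\g$ --- is again the Maurer--Cartan equation. This yields natural bijections $\Hom_{\dgla}(\LF(C),\g)\cong\mathrm{Tw}(C,\g)\cong\Hom_{\dgcc_a}(C,\CE(\g))$, and naturality in $C$ and in $\g$ is a routine verification.

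\emph{Unit and counit.} For the counit $\varepsilon_\g\colon\LF(\CE(\g))\to\g$ I would argue by a weight filtration on $\LF(\CE(\g))=\LF(\Sym(\s\g))$, by the total number of tensor factors coming from $\g$. The resulting spectral sequence first passes to the homology of $\g$ --- legitimate over a field of characteristic zero, where homology commutes with $\Sym^n$ and with the formation of free Lie words --- and then identifies $\varepsilon_\g$ with the counit of the analogous adjunction between the cocommutative cooperad and the Lie operad; that the latter is a quasi-isomorphism is exactly the acyclicity of the Koszul complex of $\mathsf{Lie}$, i.e.\ the Koszulity of $\mathsf{Lie}$ with Koszul dual $\mathsf{Com}$. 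This is classical, and is essentially Quillen's original computation. Granting it, the unit $\eta_X\colon X\to\CE(\LF(X))$ follows formally. One has $\U\circ\LF=\Omega$ on the nose --- the enveloping algebra of the free Lie algebra on $W$ is the tensor algebra $\T W$, and the differentials match --- so applying $\U$ to the triangle identity $\varepsilon_{\LF X}\circ\LF(\eta_X)=\id_{\LF X}$ gives $(\U\varepsilon_{\LF X})\circ\Omega(\eta_X)=\id_{\Omega X}$. Now $\U\varepsilon_{\LF X}$ is a quasi-isomorphism, since $\varepsilon_{\LF X}$ is one by the previous step and $\U$ preserves quasi-isomorphisms; hence $\Omega(\eta_X)$ is a quasi-isomorphism, and since $\Omega$ reflects quasi-isomorphisms on coaugmented conilpotent cocommutative coalgebras --- the bar--cobar unit $C\to\B(\Omega(C))$ being a quasi-isomorphism there and $\B$ preserving quasi-isomorphisms --- $\eta_X$ is a quasi-isomorphism.

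\emph{Main obstacle and alternatives.} The genuinely hard input is the operadic Koszulity of the pair $(\mathsf{Com},\mathsf{Lie})$, equivalently the acyclicity results that sit underneath the bar and cobar constructions; the spectral-sequence bookkeeping that reduces the counit to it, and the verification that everything survives in characteristic zero, are the delicate points (and outside the connected range one must be attentive to conilpotence, passing if necessary to completed (co)bar complexes, cf.\ the footnote in the introduction and Appendix~\ref{section:twisting_cochains}). A conceptually cleaner alternative, due to Hinich, is model-categorical: put the projective model structure on $\dgla$ (quasi-isomorphisms as weak equivalences, surjections as fibrations), transfer it along the adjunction to $\dgcc_a$, show that $(\LF,\CE)$ becomes a Quillen equivalence, and read off that the derived unit and counit are weak equivalences; one then checks that on the objects at hand the transferred weak equivalences are quasi-isomorphisms and that no (co)fibrant replacement is needed to compute the (co)units. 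In either approach the adjunction itself is formal, and the weight-filtration / Koszulity input is the crux.
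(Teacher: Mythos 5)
Your proposal is essentially the argument behind the paper's citation: the paper itself gives no proof of Theorem~\ref{Quillen} (it refers to Appendix~B of Quillen and to Hinich), and your route --- the adjunction via Maurer--Cartan elements of the convolution Lie algebra $\Hom(\overline{C},\g)$ using freeness of $\LF$ and cofreeness of $\Sym(\s\g)$ among conilpotent coalgebras, the counit by a weight filtration reducing to the $\mathsf{Com}$--$\mathsf{Lie}$ Koszul complex, and the unit deduced formally from $\U\circ\LF\cong\Omega$, preservation of quasi-isomorphisms by $\U$ and $\B$, and the bar--cobar resolution --- is precisely the classical Quillen/Hinich argument in modern packaging. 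The one hypothesis you must keep explicit is the connectedness (conilpotence) of the coalgebras, which both the cofreeness step and the convergence of your spectral sequence require, and which the paper records in the remark following the theorem; you flag this correctly, so I see no gap.
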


\begin{remark}
The above theorem works under the hidden assumption that we restrict to the subcategory of \emph{connected} differential
graded cocommutative coalgebras; see Appendix B of \cite{Quillen}. All the coalgebras to which we will apply the Theorem
are of this kind.\footnote{In contrast, the coalgebra $C_\bullet(M)$ is not connected. This is what forces us to introduce
the completed bar complex; see Appendix~\ref{section:twisting_cochains}.}
\end{remark}

\begin{definition}
The strictification functor
$\ST: \Linfty \to \dgla$
is $\ST:= \LF\circ \CE$.
\end{definition}

\begin{corollary}\label{corollary:g_to_ST(g)}
Let $\mathfrak{g}$ be an $L_\infty$-algebra. Then the unit of the adjunction between $\LF$ and $\CE$, applied to $\CE(\g)$ gives a
map
\[ \eta \in \Hom_{\dgcc_a}(\CE(\g), \CE(\ST(\g)))\cong \Hom_{\Linfty}(\g,\ST(\g)),  \]
which is a quasi-isomorphism of $L_\infty$-algebras.
\end{corollary}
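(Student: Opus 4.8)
The plan is to recognize $\eta$ as a component of the unit of the Quillen--Hinich adjunction of Theorem~\ref{Quillen}, to conclude from that theorem that $\eta$ is a quasi-isomorphism of differential graded coalgebras, and then to transport this conclusion across the identification $\Hom_{\dgcc_a}(\CE(\g),\CE(\ST(\g)))\cong\Hom_{\Linfty}(\g,\ST(\g))$.

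First I would unwind the definitions involved. Since $\ST=\LF\circ\CE$, the object $\ST(\g)=\LF(\CE(\g))$ is a differential graded Lie algebra and $\CE(\ST(\g))=\CE(\LF(\CE(\g)))$. Writing $u$ for the unit of the adjunction $\LF\dashv\CE$ furnished by Theorem~\ref{Quillen}, the map $\eta$ is by construction the component $u_{\CE(\g)}\colon\CE(\g)\to\CE(\LF(\CE(\g)))$ of $u$ at the object $\CE(\g)$ of $\dgcc_a$. The identification $\Hom_{\dgcc_a}(\CE(\g),\CE(\ST(\g)))\cong\Hom_{\Linfty}(\g,\ST(\g))$ is nothing but the definition of a morphism of $L_\infty$-algebras via the functor $\CE$; under it $\eta$ corresponds to an $L_\infty$-morphism $\g\to\ST(\g)$, which I denote again by $\eta$.

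Next I would verify that $\eta$ is a quasi-isomorphism of differential graded coalgebras. The coalgebra $\CE(\g)=\Sym(\s\g)$ is \emph{connected}: the increasing filtration $F_p\CE(\g):=\bigoplus_{n\le p}\Sym^n(\s\g)$ is exhaustive, begins with $F_0=\mathbb{R}$, and is compatible with the reduced coproduct. This connectedness is exactly the hypothesis needed for Theorem~\ref{Quillen}, so that theorem applies to $X=\CE(\g)$ and gives precisely that the unit $u_{\CE(\g)}=\eta$ is a quasi-isomorphism.

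Finally I would upgrade ``quasi-isomorphism of differential graded coalgebras'' to ``quasi-isomorphism of $L_\infty$-algebras'', i.e.\ deduce that the linear part $\eta_1\colon(\g,\ell_1)\to(\ST(\g),\delta)$ is a quasi-isomorphism of complexes. For this I would use the standard fact (see e.g.\ \cite{LM,Getzler}) that an $L_\infty$-morphism $f\colon\g\to\mathfrak{h}$ is a quasi-isomorphism if and only if $\CE(f)$ is a quasi-isomorphism of differential graded coalgebras. The ``only if'' direction is the routine one: the filtration by symmetric powers is the coradical filtration and is therefore preserved by every morphism of co-augmented cocommutative coalgebras; on the associated graded the differential of $\CE(\g)$ becomes $\bigoplus_n\Sym^n(\s\ell_1)$ and $\CE(f)$ becomes $\bigoplus_n\Sym^n(\s f_1)$; and since we work over a field of characteristic zero, $H(\Sym^n(V,d))\cong\Sym^n(H(V,d))$, so the first page of the resulting (convergent, bounded below, exhaustive) spectral sequence is $\Sym(\s H(f_1))$, where an isomorphism forces an isomorphism on the abutment. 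The direction we actually need is the \emph{converse} --- detecting a quasi-isomorphism of the linear parts from one of the Chevalley--Eilenberg complexes --- and this is the single genuine obstacle of the argument; I would treat it as a standard input, citing \cite{LM,Getzler}, since it is exactly the kind of homotopical control over $\CE$ that underlies Theorem~\ref{Quillen} itself.
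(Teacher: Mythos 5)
Your proposal is correct and follows essentially the same route as the paper, which states this as an immediate consequence of Theorem~\ref{Quillen} applied to the connected coalgebra $\CE(\g)$ (the connectedness point being exactly what the remark after that theorem addresses). The only extra content you supply is the translation between ``$\CE(\eta)$ is a quasi-isomorphism of coalgebras'' and ``$\eta$ is a quasi-isomorphism of $L_\infty$-algebras,'' which the paper leaves implicit and which is indeed the standard fact you cite from \cite{LM,Getzler}.
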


\begin{remark}\label{remark:ST(g)_to_g}
In case $\g$ is a differential graded Lie algebra, there is also a morphism
$ \rho: \ST(\g) \to \g$
of differential graded Lie algebras obtained by the adjunction
\[ \Hom_{\dgla}(\ST(\g),\g) = \Hom_{\dgla}(\LF(\CE(\g)),\g) \cong \Hom_{\dgcc_a}(\CE(\g),\CE(\g))\]
from $\id_{\CE(\g)}$. Moreover, $\rho \circ \eta = \mathsf{id}$ holds, hence $\rho$ is a quasi-isomorphism.
\end{remark}

\begin{remark}
Let $\iota$ denote the inclusion functor $\dgcc_a \rightarrow \dgc_a$.
One can check that the functors $\Omega \circ \iota$ and $\U \circ \LF$
are isomorphic.

We sum up this subsection in the diagram
\begin{align*}
\xymatrix{
\Linfty \ar[rr]^{\CE} \ar[rrrd]_{\ST}& & \dgcc_a \ar[r]^{\iota} \ar[dr]^{\Lie} & \dgc_a  \ar[r]^{\Omega}& \dga_a\\
 &  & & \dgla. \ar[ur]_{\U}&
}
\end{align*}
Observe that the triangle on the left commutes, while the triangle on the right side commutes up
to a natural isomorphism.
\end{remark}

\subsection{The enveloping algebra}\label{subsection:universal_enveloping}

Following \cite{B}, we now define the universal enveloping algebra of an $L_\infty$-algebra.
The idea is to use the strictification functor.
\begin{definition}
The universal enveloping functor
$\Uinfty\colon \Linfty \rightarrow \dga$
is given by\linebreak
$\Uinfty:=\U \circ \ST$.
We call $\Uinfty(\mathfrak{g})$ the universal enveloping algebra of $\mathfrak{g}$.
\end{definition}

The universal enveloping algebra $\Uinfty(\g)$ of a differential graded Lie algebra $\g$, seen as an $L_\infty$ algebra, is not
the same as the usual enveloping algebra $\U(\g)$ of $\g$. However, these two algebras are naturally quasi-isomorphic:

\begin{proposition}\label{Halperin}
Let $\mathfrak{g}$ be a differential graded Lie algebra.
The map
$\U (\rho)\colon$\linebreak $\Uinfty(\g) \rightarrow \U(\g)$
induced from $\rho: \ST(\g) \to \g$
is a quasi-isomorphism of differential graded algebras.
\end{proposition}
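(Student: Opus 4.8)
The plan is to reduce the statement to a known fact about the universal enveloping algebra functor, namely that $\U$ preserves quasi-isomorphisms, which was recorded in the definition of $\U$ above. Recall that $\ST = \LF \circ \CE$ and $\Uinfty = \U \circ \ST$. By Remark~\ref{remark:ST(g)_to_g}, for a differential graded Lie algebra $\g$ there is a morphism of differential graded Lie algebras $\rho\colon \ST(\g) \to \g$, obtained from $\id_{\CE(\g)}$ via the adjunction of Theorem~\ref{Quillen}, and moreover $\rho \circ \eta = \id$ where $\eta\colon \g \to \ST(\g)$ is the unit. Since $\eta$ is a quasi-isomorphism (this is the content of Theorem~\ref{Quillen}, or of Corollary~\ref{corollary:g_to_ST(g)} applied to the underlying $L_\infty$-algebra of $\g$) and $\rho \circ \eta = \id$ is certainly a quasi-isomorphism, the two-out-of-three property for quasi-isomorphisms forces $\rho$ itself to be a quasi-isomorphism of differential graded Lie algebras. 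This is exactly the last sentence of Remark~\ref{remark:ST(g)_to_g}, so I may simply invoke it.

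Given that $\rho\colon \ST(\g) \to \g$ is a quasi-isomorphism of differential graded Lie algebras, I would then apply the functor $\U$. The definition of $\U$ explicitly asserts that $\U$ preserves quasi-isomorphisms, so $\U(\rho)\colon \U(\ST(\g)) = \Uinfty(\g) \to \U(\g)$ is a quasi-isomorphism of differential graded algebras. That completes the argument.

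For the sake of giving more than a one-line proof, I would also indicate why $\U$ preserves quasi-isomorphisms, in case one wishes to keep the paper self-contained. The underlying graded object of $\U(\g)$ is computed by the Poincar\'e--Birkhoff--Witt theorem: there is an isomorphism of graded coalgebras (or at least of graded vector spaces) $\U(\g) \cong \Sym(\g)$, natural enough that a morphism $f\colon \g \to \h$ of differential graded Lie algebras induces, on associated graded with respect to the PBW filtration, the map $\Sym(f)\colon \Sym(\g) \to \Sym(\h)$. If $f$ is a quasi-isomorphism then so is $\Sym(f)$ — because over a field of characteristic zero taking (graded) symmetric powers commutes with taking homology, each $\Sym^n$ of a quasi-isomorphism being a quasi-isomorphism via the K\"unneth theorem and the fact that $\Sym^n V = (V^{\otimes n})^{\Sigma_n}$ is a direct summand — and then a spectral sequence comparison for the PBW filtration shows $\U(f)$ is a quasi-isomorphism. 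Applying this with $f = \rho$ gives the Proposition.

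The only subtlety worth flagging is that all of this takes place over $\mathbb{R}$ (characteristic zero), which is what makes the splitting $\Sym^n V \subset V^{\otimes n}$ and the commutation of $\Sym^n$ with homology valid; in positive characteristic the statement would fail. Since the paper works over $\mathbb{R}$ throughout, there is no real obstacle here — the main "work" is entirely bookkeeping, and the mathematical content is already packaged in Theorem~\ref{Quillen}, Remark~\ref{remark:ST(g)_to_g}, and the stated property of $\U$.
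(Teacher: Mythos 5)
Your proof is correct and follows essentially the same route as the paper: the paper's proof simply cites that $\rho$ is a quasi-isomorphism (Remark~\ref{remark:ST(g)_to_g}) and that $\U$ preserves quasi-isomorphisms. Your additional PBW/spectral-sequence sketch of why $\U$ preserves quasi-isomorphisms is a correct elaboration of a fact the paper takes as given, but it does not change the argument.
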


\begin{proof}
This is immediate from the fact that $\rho$ is a quasi-isomorphism and that the functor
$\U$ preserves quasi-isomorphisms.
\end{proof}

As in the usual case of differential graded Lie algebras, the functor $\Uinfty$ can be characterized as a left adjoint
to a forgetful functor:

\begin{proposition}\label{prop:U_left_to_Sigma}
The functor $\Uinfty\colon \Linfty \rightarrow \dga$ is left adjoint to the
forgetful functor $\Sigma_\infty:=\iota \circ \Sigma\colon \dga \rightarrow \Linfty,$
where $\iota\colon \dgla \rightarrow \Linfty$ is the inclusion functor.
\end{proposition}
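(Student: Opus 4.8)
The plan is to exhibit the adjunction as a composite of two adjunctions that have already been established. On the one hand, Theorem~\ref{Quillen} provides the adjunction $\LF \dashv \CE$ between $\dgcc_a$ and $\dgla$; composing with the Chevalley--Eilenberg functor $\CE\colon \Linfty \to \dgcc_a$ (which has no adjoint role here, it is simply the first leg of the definition $\ST = \LF \circ \CE$) does not by itself give an adjunction, so I instead use the classical fact that $\U\colon \dgla \to \dga$ is left adjoint to the antisymmetrization functor $\Sigma\colon \dga \to \dgla$, which is recalled in the excerpt. So the strategy is: first show $\ST = \LF\circ\CE$ is left adjoint to $\CE'\colon \dgla \to \Linfty$ for a suitable right adjoint, then stack this with $\U \dashv \Sigma$.

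Concretely, I would proceed as follows. First I would observe that the functor $\CE\colon \Linfty \to \dgcc_a$ is fully faithful onto its image in the sense that a morphism of $L_\infty$-algebras $\g \to \h$ is \emph{by definition} a morphism $\CE(\g)\to\CE(\h)$ in $\dgcc_a$; that is, $\Hom_{\Linfty}(\g,\h) = \Hom_{\dgcc_a}(\CE\g,\CE\h)$. Using this together with the $\LF \dashv \CE$ adjunction of Theorem~\ref{Quillen}, for any $\g \in \Linfty$ and any $\mathfrak{n}\in\dgla$ we get
\[
\Hom_{\dgla}(\ST(\g),\mathfrak{n}) = \Hom_{\dgla}(\LF(\CE\g),\mathfrak{n}) \cong \Hom_{\dgcc_a}(\CE\g, \CE\mathfrak{n}) = \Hom_{\Linfty}(\g, \iota\,\mathfrak{n}),
\]
where in the last equality $\iota\mathfrak n$ denotes the differential graded Lie algebra $\mathfrak n$ regarded as an $L_\infty$-algebra, and I am using that for a strict $L_\infty$-algebra $\h=\iota\mathfrak n$ one has $\CE(\iota\mathfrak n)$ equal to the usual Chevalley--Eilenberg coalgebra of $\mathfrak n$. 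Thus $\ST\colon \Linfty \to \dgla$ is left adjoint to $\iota\colon \dgla \to \Linfty$. Next I would simply compose this with the adjunction $\U \dashv \Sigma$ recalled just before Definition~\ref{def:cobar}: for $\g\in\Linfty$ and $A \in \dga$,
\[
\Hom_{\dga}(\Uinfty(\g),A) = \Hom_{\dga}(\U(\ST\g),A) \cong \Hom_{\dgla}(\ST\g, \Sigma A) \cong \Hom_{\Linfty}(\g, \iota(\Sigma A)) = \Hom_{\Linfty}(\g,\Sigma_\infty A).
\]
All isomorphisms are natural in both variables because each individual adjunction is, so $\Uinfty \dashv \Sigma_\infty$ as claimed.

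The only genuine point requiring care — and the step I expect to be the main obstacle — is the identification used at the second-to-last equality in the first display: I must check that, when the target is a \emph{strict} $L_\infty$-algebra $\h = \iota\mathfrak n$, the hom-set $\Hom_{\Linfty}(\g,\h)$ computed via $\CE$ really does coincide with $\Hom_{\dgcc_a}(\CE\g,\CE\mathfrak n)$ for $\CE\mathfrak n$ the \emph{ordinary} (quadratic-differential) Chevalley--Eilenberg coalgebra of the dg Lie algebra $\mathfrak n$, i.e.\ that the inclusion $\dgla \hookrightarrow \Linfty$ really is realized on Chevalley--Eilenberg coalgebras by the honest $\CE$ functor with no correction terms. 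This is essentially a bookkeeping check on the coderivation-differential dictionary $\mathrm{Coder}(\Sym(\s\g))\cong\Hom(\Sym(\s\g),\s\g)$, verifying that a dg Lie algebra has only the length-one (differential) and length-two (bracket) Taylor coefficients and that these match the definition of $\CE$ on $\dgla$. I would dispatch this with a sentence, since it is standard, but it is where the content of the statement actually resides; everything else is formal concatenation of adjunctions.
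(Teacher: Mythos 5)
Your proposal is correct and follows essentially the same route as the paper: the authors likewise obtain the adjunction as the formal composite $\Hom_{\dga}(\U(\ST\g),A)\cong\Hom_{\dgla}(\ST\g,\Sigma A)\cong\Hom_{\dgcc_a}(\CE\g,\CE(\Sigma A))=\Hom_{\Linfty}(\g,\Sigma_\infty A)$, using $\U\dashv\Sigma$ and the Quillen--Hinich adjunction $\LF\dashv\CE$, with the last equality being the definition of $L_\infty$-morphisms via $\CE$. Your packaging of the middle steps as a separate adjunction $\ST\dashv\iota$, and your remark that $\CE$ of a strict dg Lie algebra is the ordinary Chevalley--Eilenberg coalgebra, are just explicit spellings-out of what the paper leaves implicit.
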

\begin{proof}
This is a formal consequence of the adjunctions discussed above:
\begin{eqnarray*}
\Hom_{\dga}(\Uinfty(\g), A)&\cong& \Hom_{\dga}(\U(\ST(\g)), A)
 \cong \Hom_{\dgla}(\ST(\g), \Sigma(A)) \\
 &\cong& \Hom_{\dgcc_a}( \CE(\g),\CE(\Sigma(A)))
 = \Hom_{\Linfty}(\g, \Sigma_\infty(A)).
\end{eqnarray*}
\end{proof}

The proof of the following lemma will be omitted for brevity.

\begin{lemma}
The functor $\Uinfty\colon \Linfty \rightarrow \dga$ preserves quasi-isomorphisms.
\end{lemma}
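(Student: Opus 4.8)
The plan is to use the factorization $\Uinfty = \U \circ \ST$ together with the fact, recorded in the definition of $\U$ above, that $\U\colon \dgla \to \dga$ preserves quasi-isomorphisms. This reduces the claim to showing that the strictification functor $\ST = \LF \circ \CE\colon \Linfty \to \dgla$ preserves quasi-isomorphisms, i.e.,\ sends a morphism $f\colon \g \to \mathfrak{h}$ of $L_\infty$-algebras whose linear part is a quasi-isomorphism of complexes to a quasi-isomorphism of differential graded Lie algebras.

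For this I would exploit the unit $\eta$ of the adjunction $\LF \dashv \CE$ of Theorem~\ref{Quillen}. Since $\CE$ is fully faithful (this is precisely how morphisms of $L_\infty$-algebras are defined) and the unit of an adjunction is natural, the quasi-isomorphisms $\eta_\g \in \Hom_{\Linfty}(\g, \ST(\g))$ of Corollary~\ref{corollary:g_to_ST(g)} are the components of a natural transformation $\eta\colon \id_{\Linfty} \Rightarrow \iota\circ\ST$, where $\iota\colon \dgla \to \Linfty$ is the inclusion. Hence for every morphism $f\colon \g\to\mathfrak{h}$ in $\Linfty$ the square
\[
\xymatrix{
\g \ar[r]^{\eta_{\g}} \ar[d]_{f} & \ST(\g) \ar[d]^{\ST(f)}\\
\mathfrak{h} \ar[r]^{\eta_{\mathfrak{h}}} & \ST(\mathfrak{h})
}
\]
commutes in $\Linfty$, and its horizontal arrows are quasi-isomorphisms by Corollary~\ref{corollary:g_to_ST(g)}.

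Now I would invoke that $L_\infty$-quasi-isomorphisms satisfy two-out-of-three: a morphism of $L_\infty$-algebras is a quasi-isomorphism iff its linear (arity-one) component is a quasi-isomorphism of complexes, and the linear component of a composite is the composite of the linear components (a morphism of coalgebras sends primitives to primitives, and the primitives of $\Sym(\s\g)$ are exactly $\s\g = \Sym^1(\s\g)$, so the arity-one Taylor coefficient of $G\circ F$ is $G_1\circ F_1$). Applied to the commuting square, $f$ being a quasi-isomorphism forces $\ST(f)$ to be one as well; since $\ST(f)$ is a strict morphism this says exactly that $\ST(f)$ is a quasi-isomorphism in $\dgla$. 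Applying $\U$, which preserves quasi-isomorphisms, yields that $\Uinfty(f) = \U(\ST(f))$ is a quasi-isomorphism in $\dga$.

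The only real content beyond formal manipulation is the second step: verifying that $\eta$ is natural as a transformation of functors on $\Linfty$ — not merely a family of quasi-isomorphisms — which rests on naturality of the adjunction unit on $\dgcc_a$ and full faithfulness of $\CE$. Alternatively, one could bypass $\eta$ and argue that $\CE$ and $\LF$ each preserve quasi-isomorphisms directly ($\CE$ by filtering $\Sym(\s\g)$ by symmetric powers, whose associated graded differential is induced by the linear part of the $L_\infty$-structure, and comparing the resulting spectral sequences; $\LF$ by Quillen's analysis of the connected case), but the argument via $\eta$ is shorter and uses only results already at hand.
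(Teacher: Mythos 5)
There is no proof in the paper to compare against: the authors state that the proof of this lemma is omitted for brevity. Judged on its own terms, your argument is correct and uses exactly the ingredients the paper has on hand. The reduction via $\Uinfty=\U\circ\ST$ and the stated fact that $\U$ preserves quasi-isomorphisms is fine; the square $\ST(f)\circ\eta_{\g}=\eta_{\mathfrak{h}}\circ f$ does commute in $\Linfty$, since $L_\infty$-morphisms are by definition coalgebra morphisms of Chevalley--Eilenberg coalgebras and the unit of the adjunction of Theorem~\ref{Quillen} is natural; its horizontal arrows are quasi-isomorphisms by Corollary~\ref{corollary:g_to_ST(g)}; and the two-out-of-three step is legitimate because the arity-one component of a composite is the composite of the arity-one components --- your justification via primitives is the right one, as a co-augmented coalgebra morphism sends the primitives $\Sym^1(\s\g)=\s\g$ into $\Sym^1(\s\mathfrak{h})$, so on $\Sym^1$ the morphism is literally its linear part. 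Two points worth making explicit in a written-up version: the Quillen--Hinich theorem is used under the connectedness hypothesis, which holds because $\CE(\g)$ is a connected (conilpotent) coalgebra, as the paper's remark after Theorem~\ref{Quillen} notes; and the final deduction that $H(\ST(f))$ is an isomorphism should be spelled out as $H(\ST(f))=H((\eta_{\mathfrak{h}})_1)\circ H(f_1)\circ H((\eta_{\g})_1)^{-1}$, which is what the commuting square gives on cohomology. Your alternative sketch (showing that $\CE$ and $\LF$ separately preserve quasi-isomorphisms) would also work but requires spectral-sequence input; the route through $\eta$ is the economical one given the results already established in the paper.
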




\section{Complete $L_\infty$-algebras}

\subsection{Generalities about complete $L_\infty$-algebras}\label{subsection:filtered}

The computation of holonomies is an operation that involves infinite sums. For this reason, we have to consider $L_\infty$-algebras where infinite sums can be treated.

\begin{definition}
An ideal  of an $L_\infty$-algebra $\g$ is a graded subspace $I\subset \g$ such that
\[[x_1,\dots, x_k] \in I ,\textrm{\, if one of the $x_i$ belongs to $I$}.\]
A filtration $F$ on $\g$ is a decreasing sequence of ideals
$F_1(\g)=\g \supseteq F_2(\g) \supseteq F_3(\g)\supseteq \cdots, $ such that:
\begin{enumerate}
\item $\bigcap_k F_k(\g)=0$.
\item If $x_{i} \in F_{l_i}(\g)$, then
$[x_1,\dots,x_k]\in F_{l_1+\dots +l_k}(\g)$.
\end{enumerate}

\end{definition}

\begin{definition}
A filtered $L_\infty$-algebra is an $L_\infty$-algebra together with a filtration.

If $\g, \mathfrak{h}$ are filtered $L_\infty$-algebras, a filtered morphism is a morphism $\phi$ such that if $x_i \in F_{l_i}(\g)$ then $\phi_k(x_1,\dots ,x_k) \in F_{l_1+\dots +l_k}(\mathfrak{h})$.
\end{definition}

\begin{remark}
\hspace{0cm}
\begin{enumerate}
\item If $I$ is an ideal of $\g$, then the quotient space $\g/I$ inherits the structure of an $L_\infty$-algebra.

\item Given a filtered $L_\infty$-algebra $\g$, there is a diagram

\[0 \leftarrow \g/F_2(\g) \leftarrow  \g/F_3(\g) \leftarrow \cdots . \]
The completion of $\g$, denoted $\hat{\g}$, is the limit

\[\hat{\g} := \varprojlim \g/ F_k(\g).\]

The natural map $\iota\colon  \g \rightarrow \hat{\g}$ given by $x \mapsto (\overline{x},\overline{x},\overline{x},\dots)$ is an injection in
view of the first property of the definition of a filtration.
\end{enumerate}
\end{remark}

For the sake of brevity, we will omit the proof of the following lemma.
\begin{lemma}\label{functoriality of completion}
The completion $\g \mapsto \hat{\g}$ defines a functor on the category of
filtered $L_\infty$-algebras and filtered morphisms. Moreover, for a
filtered morphism $\phi\colon \g \rightarrow \mathfrak{h}$, the following holds: $\iota \circ \phi=\hat{\phi}\circ \iota$.

\end{lemma}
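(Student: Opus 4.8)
The plan is to unwind the definitions and check that each clause of the claim is routine bookkeeping with the filtration data. First I would fix a filtered morphism $\phi\colon \g \rightarrow \mathfrak{h}$ and observe that for each $k$, the compatibility condition $\phi_j(F_{l_1}\g,\dots) \subseteq F_{\sum l_i}\mathfrak{h}$ together with the fact that the $F_k$ are ideals shows that $\phi$ descends to a well-defined $L_\infty$-morphism $\g/F_k(\g) \rightarrow \mathfrak{h}/F_k(\mathfrak{h})$. Indeed, if one of the inputs lies in $F_k(\g)$, all Taylor coefficients $\phi_j$ evaluated on it land in $F_k(\mathfrak{h})$, so the induced structure maps on the quotients are unambiguous. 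This gives a tower of $L_\infty$-morphisms compatible with the projection maps $\g/F_{k+1}(\g) \rightarrow \g/F_k(\g)$, and by the universal property of the inverse limit we obtain $\hat{\phi}\colon \hat{\g} \rightarrow \hat{\mathfrak{h}}$. One then checks that $\hat{\phi}$ is again a filtered morphism, where $\hat{\g}$ carries the filtration $F_k(\hat{\g}) := \varprojlim_{n\geq k} F_k(\g)/F_n(\g) = \ker(\hat{\g} \to \g/F_k(\g))$; this is immediate since $\hat{\phi}$ is assembled from the level maps which respect the filtration degrees.

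Next I would verify functoriality: $\widehat{\id_{\g}} = \id_{\hat{\g}}$ is clear since the identity induces the identity on each quotient, and $\widehat{\psi \circ \phi} = \hat{\psi}\circ\hat{\phi}$ follows because at each level $k$ the induced map of $\psi\circ\phi$ on the quotients is the composite of the induced maps of $\psi$ and $\phi$ (composition of $L_\infty$-morphisms is computed coefficientwise via the cofree coalgebra description, and passing to quotients commutes with this), hence the limits agree. Finally, for the compatibility $\iota\circ\phi = \hat{\phi}\circ\iota$: an element $x\in\g$ maps under $\iota$ to the coherent sequence $(\overline{x})_k \in \varprojlim \g/F_k(\g)$, and $\hat{\phi}$ sends this to the sequence whose $k$-th entry is the image of $\overline{x}$ under $\g/F_k(\g)\to\mathfrak{h}/F_k(\mathfrak{h})$, which by construction is $\overline{\phi(x)}$; this is precisely $\iota(\phi(x))$, so the square commutes.

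The only genuinely delicate point — and the one I would flag as the main obstacle — is making precise that an $L_\infty$-morphism, which is really a morphism of the associated cocommutative coalgebras $\CE(\g)\to\CE(\mathfrak{h})$, genuinely descends to the completions at the coalgebra level, i.e.\ that $\CE$ and $\widehat{(-)}$ interact well and that the comultiplication on $\CE(\hat\g)$ is the inverse limit of the comultiplications on $\CE(\g/F_k\g)$. Once one commits to the explicit Taylor-coefficient ($\phi_k$) description of $L_\infty$-morphisms this is straightforward, since each $\phi_k$ is a finite sum and the filtration estimates control everything degreewise; but the bookkeeping with Koszul signs and the symmetric coalgebra structure is what the authors presumably have in mind when they say the proof is omitted for brevity. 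I would therefore structure the argument entirely in terms of the $\{\phi_k\}_{k\geq 1}$, relegating the coalgebra reformulation to a remark, since the filtration axioms are tailor-made to guarantee convergence of all the relevant infinite sums in $\hat{\g}$ and $\hat{\mathfrak{h}}$.
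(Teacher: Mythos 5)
The paper deliberately omits the proof of this lemma (``for the sake of brevity''), so there is no argument to compare against; your proof supplies exactly the routine verification the authors presumably had in mind, and it is correct: descend each Taylor coefficient $\phi_j$ to the quotients $\g/F_k(\g)\to\mathfrak{h}/F_k(\mathfrak{h})$ using the filtered-morphism condition, pass to the inverse limit, and check functoriality and compatibility with $\iota$ levelwise. The only point I would tighten is the last paragraph of your main argument: since $\phi$ is an $L_\infty$-morphism rather than a single map on elements, the identity $\iota\circ\phi=\hat{\phi}\circ\iota$ should be stated as the equality of all Taylor coefficients, $\iota\circ\phi_j=\hat{\phi}_j\circ\iota^{\otimes j}$ for every $j$, which follows verbatim from your levelwise computation because $\iota$ is strict --- consistent with your own closing remark that the whole argument is best run in terms of the family $\{\phi_j\}_{j\ge 1}$.
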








\begin{definition}\label{definition:completeness}
A filtered $L_\infty$-algebra $\g$ is complete if the canonical
injection $\g \to \hat{\g}$ is  an isomorphism.
\end{definition}

\begin{remark}
\hspace{0cm}
\begin{enumerate}
\item A  filtered $L_\infty$-algebra $\g$ has the structure of a topological vector space where the sequence $F_k(\g)$ is a local basis for $0\in \g$. This topology is Hausdorff, since it is induced by the metric:
$d(x,y):= \mathsf{\inf}\{\frac{1}{k}: x-y \in F_k(\g)\}$.

In particular, any sequence of elements in a filtered $L_\infty$-algebra $\g$ has at most one limit.
In case $\g$ is complete in the sense of Definition~\ref{definition:completeness}, it is also complete
as a topological vector space.

\item Following \cite{Getzler}, we observe that there is a natural decreasing sequence of ideals on any $L_\infty$-algebra $\g$, defined recursively as follows: $F_1(\g):=\g$ and

\[F_k(\g):=  \sum_{l_1+ \dots +l_i=k} [F_{l_1}(\g),\dots ,F_{l_i}(\g)]. \]

In \cite{Getzler} this filtration is called the lower central filtration of $\g$.
Since it might fail to be a filtration in our sense, because the intersection of the $F_k(\g)$ might not be zero,
we refer to the collection $F_k(\g)$ as the lower central series of $\g$.

Given any  filtration $F'$ on $\g$, it is clear that
$ F_k(\g)\subseteq F'_k(\g),$
and therefore
\[\bigcap_k F_k(\g)\subseteq \bigcap_kF'_k(\g) =0.\]
Thus, if $\g$ admits a filtration at all, then the lower central series is a filtration, and
it is the minimal one.
\end{enumerate}
\end{remark}

\begin{definition}\label{definition:MC}
A Maurer--Cartan element of a complete $L_\infty$-algebra is an element $\alpha \in \mathfrak{g}^1$ such that
${\displaystyle
\sum_{k\geq 1}} \frac{1}{k!}[\underbrace{\alpha \otimes \dots \otimes \alpha}_{k \mathsf{ \,times}}]=0. $
We denote by $\mathsf{MC}(\g)$ the set of all Maurer--Cartan elements of $\g$.
\end{definition}

\begin{lemma}
Let $\phi\colon \g \rightarrow \mathfrak{h}$ be a filtered morphism between complete $L_\infty$-algebras.
There is a map of sets
$\phi_*\colon \g \rightarrow \mathfrak{h}$,
given by the formula $\phi_*(\alpha):= \sum_{k\geq 1} \phi_k(\alpha^{\otimes k})$.
This map is continuous at zero and preserves Maurer--Cartan elements.

\end{lemma}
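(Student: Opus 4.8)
The plan is to establish the three assertions — that $\phi_*$ is a well-defined map of sets, that it is continuous at zero, and that it preserves Maurer--Cartan elements — in that order, with the well-definedness being the substantive point and the other two following almost formally.

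\textbf{Well-definedness.} First I would check that the infinite sum $\phi_*(\alpha) = \sum_{k\geq 1}\phi_k(\alpha^{\otimes k})$ actually converges in the complete $L_\infty$-algebra $\mathfrak{h}$. The key is the filtration estimate from the definition of a filtered morphism: since $\alpha \in \mathfrak{g}^1 = F_1(\mathfrak{g})$, each term $\phi_k(\alpha^{\otimes k})$ lies in $F_k(\mathfrak{h})$. (Strictly, $\alpha$ need not lie in any deeper $F_l(\mathfrak{g})$, so one only gets $\phi_k(\alpha^{\otimes k}) \in F_{1+\cdots+1}(\mathfrak{h}) = F_k(\mathfrak{h})$; that is exactly what is needed.) Hence the partial sums $s_N := \sum_{k=1}^N \phi_k(\alpha^{\otimes k})$ form a Cauchy sequence: for $M > N$, $s_M - s_N \in F_{N+1}(\mathfrak{h})$, and by completeness of $\mathfrak{h}$ (as a topological vector space, using the metric description in the preceding remark) this sequence has a unique limit, which we declare to be $\phi_*(\alpha)$. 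I would also note there is no degree issue: each $\phi_k$ has degree $0$ as a map $\Sym^k(\mathsf{s}\mathfrak{g}) \to \mathsf{s}\mathfrak{h}$, which on unshifted spaces means $\phi_k(\alpha^{\otimes k})$ has the same degree as $\alpha$, namely $1$, so the sum lies in $\mathfrak{h}^1$.

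\textbf{Continuity at zero.} Given a basic neighbourhood $F_n(\mathfrak{h})$ of $0$, I claim $\phi_*$ maps $F_n(\mathfrak{g}) \cap \mathfrak{g}^1$ into $F_n(\mathfrak{h})$. Indeed if $\alpha \in F_n(\mathfrak{g})$ then $\phi_k(\alpha^{\otimes k}) \in F_{kn}(\mathfrak{h}) \subseteq F_n(\mathfrak{h})$ for every $k \geq 1$, and since $F_n(\mathfrak{h})$ is a closed subspace (it is open, hence closed, in this topology — or directly: it is the kernel of the projection $\mathfrak{h} \to \mathfrak{h}/F_n(\mathfrak{h})$ and the metric topology makes ideals closed) the limit $\phi_*(\alpha)$ also lies in $F_n(\mathfrak{h})$. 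This gives continuity at $0$; note that $\phi_*(0) = 0$ since every term vanishes.

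\textbf{Preservation of Maurer--Cartan elements.} Suppose $\alpha \in \mathsf{MC}(\mathfrak{g})$. I want $\sum_{j\geq 1}\frac{1}{j!}[\phi_*(\alpha)^{\otimes j}]_{\mathfrak{h}} = 0$. The cleanest route is the coalgebra reformulation: a Maurer--Cartan element $\alpha$ of $\mathfrak{g}$ is equivalent to a morphism of differential graded coalgebras from $\mathbb{R}$ (or more precisely the appropriate one-dimensional coaugmented coalgebra) into $\CE(\mathfrak{g})$ — concretely, the ``group-like'' element $e^{\mathsf{s}\alpha} = \sum_k \frac{1}{k!}(\mathsf{s}\alpha)^{\otimes k} \in \widehat{\CE}(\mathfrak{g})$ is a Maurer--Cartan element of the convolution structure precisely when $\alpha$ satisfies the $L_\infty$ Maurer--Cartan equation. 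Composing with $\CE(\phi)\colon \CE(\mathfrak{g}) \to \CE(\mathfrak{h})$ (which is a filtered morphism of coalgebras, so extends to completions by Lemma~\ref{functoriality of completion}) sends this group-like element to a group-like element of $\widehat{\CE}(\mathfrak{h})$, and unwinding the coalgebra-map identity $\CE(\phi)(e^{\mathsf{s}\alpha}) = e^{\mathsf{s}\beta}$ forces $\beta = \phi_*(\alpha)$ and simultaneously that $\beta$ solves the Maurer--Cartan equation in $\mathfrak{h}$. Alternatively, one can argue directly: expand $\sum_j \frac{1}{j!}[\phi_*(\alpha)^{\otimes j}]$, substitute the series for $\phi_*(\alpha)$, and use the $L_\infty$-morphism relations for $\phi$ together with the fact that $\alpha$ kills $\sum_k \frac{1}{k!}[\alpha^{\otimes k}]$; all manipulations are justified termwise because every rearrangement only mixes terms within a fixed $F_n(\mathfrak{h})$ and the sums converge there.

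\textbf{Main obstacle.} The genuinely delicate part is the Maurer--Cartan preservation: making the formal identity rigorous requires being careful that the infinite sums and the (infinitely many) brackets can be manipulated freely, which in turn relies on the filtration estimates to guarantee that each ``coefficient'' in the final expression is itself a convergent sum in $\mathfrak{h}$. The coalgebra-morphism argument sidesteps most of the bookkeeping, so I would present that as the main proof, but it does require having set up Maurer--Cartan elements in terms of $\CE$ and completed coalgebras carefully (as the paper does in its appendix on twisting cochains); the convergence and continuity parts, by contrast, are routine consequences of the filtered-morphism axiom.
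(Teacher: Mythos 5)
Your argument is correct, and the convergence and continuity-at-zero parts are essentially the paper's own (which simply records that $\phi_k(\alpha^{\otimes k})\in F_k(\mathfrak{h})$, so the series converges, and that $\phi_*(F_k(\g))\subset F_k(\mathfrak{h})$). Where you genuinely diverge is the preservation of Maurer--Cartan elements: the paper's proof is precisely the direct computation you relegate to an ``alternative'' --- expand $\sum_k\tfrac{1}{k!}[\phi_*(\alpha)\otimes\cdots\otimes\phi_*(\alpha)]$ by multilinearity, regroup the terms by total weight $p=l_1+\cdots+l_k$ (legitimate since all rearranged terms lie in $F_p(\mathfrak{h})$), and use the $L_\infty$-morphism relations evaluated on powers of $\alpha$ to identify the result with the push-forward of the curvature of $\alpha$, which vanishes. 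Your main route instead packages this combinatorics into the statement that $\CE(\phi)$ is a morphism of dg coalgebras carrying the group-like element attached to $\alpha$ to the one attached to $\phi_*(\alpha)$. That is conceptually cleaner and eliminates the sign and factorial bookkeeping from the computation itself, but it buys this at the cost of the completed-coalgebra setup: you must complete $\CE(\g)$ and $\CE(\mathfrak{h})$ with respect to a filtration combining symmetric word length with the filtrations of $\g$ and $\mathfrak{h}$, check that $\CE(\phi)$ and both coderivations extend (Lemma~\ref{functoriality of completion} is stated for $L_\infty$-algebras, not coalgebras, so this step needs its own, admittedly easy, justification), and match normalizations with the paper's conventions: since $\Sym(\s\g)$ is realized as the invariant subspace of the tensor coalgebra with deconcatenation coproduct, the group-like element is $\sum_k(\s\alpha)^{\otimes k}$ \emph{without} the $1/k!$ you wrote, which is exactly what makes its image project onto $\phi_*(\alpha)=\sum_k\phi_k(\alpha^{\otimes k})$ with no factorials. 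Also, the phrase ``Maurer--Cartan element of the convolution structure'' is not the right characterization; the statement you need is that $\alpha\in\mathsf{MC}(\g)$ if and only if the extended coderivation annihilates the group-like element, equivalently $1\mapsto\sum_k(\s\alpha)^{\otimes k}$ defines a dg coalgebra map from $\mathbb{R}$ into the completion --- none of which affects the validity of your argument, only its bookkeeping.
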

\begin{proof}
Since $\phi$ is a filtered morphism, we know that $\phi_k(\alpha^{\otimes k})\in F_k(\mathfrak{h})$ and therefore the sum converges.
It is clear that if $\alpha \in F_k(\g)$ then $\phi_*(\alpha)\in F_k(\mathfrak{h})$, so that the map is continuous at zero.
Let us now prove that $\phi_*(\alpha)$ is a Maurer--Cartan element whenever
$\alpha$ is as follows:

\begin{eqnarray*}
\sum_{k\geq 1} \frac{1}{k!}[\phi_*(\alpha) \otimes \dots \otimes \phi_*(\alpha)]&=&\sum_{k\geq 1}\frac{1}{k!}\sum_{l_1,\dots l_k} [\phi_{l_1}(\alpha^{\otimes l_1}) \otimes \dots \otimes \phi_{l_k}(\alpha^{\otimes l_{k}})]\\
&=&\sum_{p\geq 1}\sum_{l_1+\dots +l_k=p} \frac{1}{k!} [\phi_{l_1}(\alpha^{\otimes l_1}) \otimes \dots \otimes \phi_{l_k}(\alpha^{\otimes l_{k}})]\\
&=&\phi_*\left(\sum_{k\geq 1}\frac{1}{k!}[\alpha \otimes \dots \otimes \alpha]\right)=\phi_*(0)=0.
\end{eqnarray*}
\end{proof}

\begin{remark}\label{remark:MC_dga}
Similarly, for $A$ a differential graded algebra, one defines the set of
Maurer--Cartan elements to be
$ \mathsf{MC}(A) := \{ \alpha \in A^1: d\alpha + \alpha \cdot \alpha = 0\}$.
\end{remark}

\subsection{Compatibility with various functors}

The proof of the following lemma will be omitted for brevity.
\begin{lemma}\label{operations on filtrations}
Suppose that $V$ is a filtered graded vector space. Then we have the
following:

\begin{itemize}

\item The reduced tensor algebra $\overline{\T}V$ is a filtered algebra with filtration:

\[F_k(\overline{\T} V)= \sum_{l_1+\dots +l_r\geq k} F_{l_1}(V)\otimes \cdots \otimes F_{l_r}(V).\]

\item The vector space $\overline{\Sym}(V)$ is also a filtered graded vector space with filtration:
\[F_k(\overline{\Sym}(V)):= \langle \{x_1 \otimes \dots \otimes x_r \in \overline{\Sym}(V):\ \exists\  l_1+\dots +l_r\geq k \text{ with } x_i \in F_{l_i}(V)\}   \rangle\]
\item The free graded Lie algebra $\LF(V)$ is a filtered Lie algebra with filtration:
\[F_k(\LF(V)):= \langle \{ P(x_1, \dots x_r) \in \LF(V):\ \exists\  l_1+\dots +l_r\geq k \text{ with } x_i \in F_{l_i}(V)\}   \rangle.\]
Here $P(x_1, \dots,x_r)$ denotes a Lie monomial of length $k$ on $x_1, \dots ,x_r$ where all the $x_i$ appear.
\end{itemize}
\end{lemma}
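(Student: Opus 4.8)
The plan is to verify each of the three filtration claims directly from the definitions, checking in each case that the proposed sequence of subspaces satisfies the two axioms of a filtration (intersection zero, and compatibility of the bracket/product with the grading) together with the ideal property. Since $\overline{\T}V$, $\overline{\Sym}(V)$, and $\LF(V)$ are built functorially out of $V$, the key input is always the filtration of $V$ itself, and the structure of the argument is the same in all three cases.

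First I would treat the reduced tensor algebra $\overline{\T}V$. Here $F_k(\overline{\T}V)$ is spanned by products $x_1\otimes\cdots\otimes x_r$ with $x_i\in F_{l_i}(V)$ and $\sum l_i\ge k$. That this is a decreasing sequence of subspaces is immediate, and that it is a filtered algebra follows because the concatenation of a word with weight $\ge k$ and a word with weight $\ge k'$ has weight $\ge k+k'$. For the intersection being zero, I would observe that $\overline{\T}V=\bigoplus_{r\ge1}V^{\otimes r}$, and on each summand $V^{\otimes r}$ the filtration $F_k$ restricts (for $k$ large relative to $r$) to $\sum_{l_1+\cdots+l_r\ge k}F_{l_1}(V)\otimes\cdots\otimes F_{l_r}(V)$; since $\bigcap_m F_m(V)=0$ and $V^{\otimes r}$ is finite in the tensor degree, an element lying in $F_k$ for all $k$ must have all tensor components vanishing. (One should be slightly careful: an element of $\overline{\T}V$ is a finite sum of homogeneous tensors, so only finitely many tensor degrees $r$ occur, and for each the claim reduces to the finite-$r$ statement.) The same reasoning, now using the symmetrization projection $p:\T V\to\Sym(V)$ and noting that $p$ is compatible with the weight grading, handles $\overline{\Sym}(V)$: the spanning description is stable under the $\Sigma_\bullet$-action, and the coproduct $\Delta$ restricted to $\overline{\Sym}(V)$ only splits words, so it respects weights.

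For the free graded Lie algebra $\LF(V)\subset\overline{\T}V$, I would simply intersect the tensor-algebra filtration with $\LF(V)$: a Lie monomial $P(x_1,\dots,x_r)$ of bracket-length $r$ in which each $x_i$ appears, with $x_i\in F_{l_i}(V)$, expands in $\overline{\T}V$ as a linear combination of words each of which is a permutation of $x_1\otimes\cdots\otimes x_r$, hence of weight $\ge\sum l_i$; so $F_k(\LF(V))\subseteq\LF(V)\cap F_k(\overline{\T}V)$, which gives both the ideal property (the bracket of something in $F_k$ with anything lands in $F_k$ because it does so in $\overline{\T}V$) and the compatibility $[F_k(\LF(V)),F_{k'}(\LF(V))]\subseteq F_{k+k'}(\LF(V))$, and the intersection being zero follows from the corresponding fact for $\overline{\T}V$. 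I expect the only mildly delicate point to be the intersection axiom $\bigcap_k F_k=0$ in each case: one must use that every element is a finite sum of tensor-homogeneous pieces so that the tensor length is bounded, and then apply $\bigcap_m F_m(V)=0$ degree by degree; without the bound on tensor length (which would be the situation for the completed, rather than reduced, constructions) the statement would fail. Everything else is a routine bookkeeping of weights under concatenation, symmetrization, and bracketing.
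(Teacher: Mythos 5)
The paper does not actually give a proof of this lemma---it is explicitly ``omitted for brevity''---so there is no argument of the authors to compare with; judged on its own, your verification follows the expected route and is essentially correct: the algebra and bracket compatibilities are weight bookkeeping, and the only substantive point is the Hausdorff axiom $\bigcap_k F_k=0$, which you correctly reduce to the reduced tensor algebra. However, the fixed-degree statement you invoke there, namely $\bigcap_k\sum_{l_1+\cdots+l_r\ge k}F_{l_1}(V)\otimes\cdots\otimes F_{l_r}(V)=0$ inside $V^{\otimes r}$, is asserted rather than proved, and it is exactly the nontrivial content: it does not follow formally from $\bigcap_m F_m(V)=0$ alone. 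A clean way to complete it (say for $r=2$, then induct on $r$): for $k\ge 2m$ one has $F_k(V^{\otimes 2})\subseteq F_mV\otimes V+V\otimes F_mV=\ker\bigl(V\otimes V\to (V/F_mV)\otimes(V/F_mV)\bigr)$; write a nonzero $x=\sum_{s=1}^n v_s\otimes w_s$ with $(v_s)$ and $(w_s)$ linearly independent, note that a decreasing chain of subspaces of the finite-dimensional span of the $v_s$ with zero intersection is eventually zero, so for $m$ large the classes of the $v_s$ (and likewise of the $w_s$) stay independent in the quotients, hence the image of $x$ is nonzero and $x\notin F_k(V^{\otimes 2})$ for $k$ large.

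A second, smaller slip concerns $\LF(V)$: from the inclusion $F_k(\LF(V))\subseteq \LF(V)\cap F_k(\overline{\T}V)$ you deduce the bracket compatibility ``because it holds in $\overline{\T}V$'', but that only places $[F_k(\LF(V)),F_{k'}(\LF(V))]$ inside $\LF(V)\cap F_{k+k'}(\overline{\T}V)$; to conclude membership in $F_{k+k'}(\LF(V))$ you would need the reverse inclusion $\LF(V)\cap F_{k+k'}(\overline{\T}V)\subseteq F_{k+k'}(\LF(V))$, which you have not established (it is true, but not for free). No detour is needed: the bracket of a Lie monomial on letters $x_i\in F_{l_i}(V)$ with a Lie monomial on letters $y_j\in F_{m_j}(V)$ is again a Lie monomial on the union of these letters, of total weight at least $k+k'$, so the compatibility (and the ideal property) holds directly from the definition of $F_k(\LF(V))$; the embedding into $\overline{\T}V$ is needed, and is used correctly, only for $\bigcap_k F_k(\LF(V))=0$. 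With these two repairs your proof is complete.
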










We now prove that the strictification of $L_\infty$-algebras is compatible with filtrations.

\begin{lemma}\label{lemma:filtration_strictification}
Let $\g$ be a filtered $L_\infty$-algebra. Then the differential graded algebra $\ST(\g)$ has an induced filtration and
the natural morphism $\eta\colon \g \rightarrow \ST(\g)$
is a filtered morphism.
\end{lemma}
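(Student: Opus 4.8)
The plan is to unravel the definition $\ST = \LF \circ \CE$ and show that each of the three ingredients—$\CE$, then $\iota$, then $\LF$—is compatible with the given filtration, using Lemma~\ref{operations on filtrations} as the main tool. First I would equip $\CE(\g) = \Sym(\s\g)$ with the filtration $F_k(\overline{\Sym}(\s\g))$ coming from Lemma~\ref{operations on filtrations}, where $\s\g$ carries the obvious filtration $F_k(\s\g) := \s(F_k(\g))$. Then I would equip $\ST(\g) = \LF(\CE(\g)) = \LF(\us\overline{\CE(\g)})$ with the filtration $F_k(\LF(V))$ from the same lemma, applied to $V := \us\overline{\CE(\g)} = \us\overline{\Sym}(\s\g)$, which is a filtered graded vector space by the previous point (desuspension does not affect filtration degree). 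Since $\ST(\g)$ is a tensor/free-Lie construction on a filtered space, it is automatically a filtered algebra as a graded algebra; what remains is to check that the two differentials—the internal Chevalley–Eilenberg differential $\delta_\g$ and the cobar-type differential $\partial$ coming from the reduced coproduct—are filtered, i.e. send $F_k$ into $F_k$.

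The key step is the filtration estimate on $\delta_\g$. The Chevalley–Eilenberg differential is built from the $L_\infty$ brackets $\ell_k$ via the identification $\mathrm{Coder}(\Sym(\s\g)) \cong \Hom(\Sym(\s\g),\s\g)$; concretely, on a symmetric word $\s x_1 \odot \cdots \odot \s x_r$ it produces a sum of terms in which some subset of the $x_i$'s is replaced by a single element $\ell_j(x_{i_1},\dots,x_{i_j})$. By axiom (2) of a filtration, if $x_i \in F_{l_i}(\g)$ then $\ell_j(x_{i_1},\dots,x_{i_j}) \in F_{l_{i_1}+\cdots+l_{i_j}}(\g)$, so the total filtration degree of the output word is at least that of the input word. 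Hence $\delta_\g$ preserves $F_k(\overline{\Sym}(\s\g))$, and after desuspending, $\LF$ of it preserves $F_k(\LF(\us\overline{\CE(\g)}))$. For $\partial$, the argument is that the reduced coproduct $\overline\Delta$ on $\Sym(\s\g)$ splits a word into two subwords whose filtration degrees add up to that of the original word, so $\partial$ is filtered as well. Thus $\delta = \delta_\g + \partial$ is a filtered derivation, and $\ST(\g)$ is a filtered differential graded Lie algebra.

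Finally, I would check that $\eta\colon\g \to \ST(\g)$ is a filtered morphism. By Corollary~\ref{corollary:g_to_ST(g)}, $\eta$ is the unit of the $\LF \dashv \CE$ adjunction applied to $\CE(\g)$; concretely, as an $L_\infty$-morphism $\g \to \ST(\g)$ its Taylor coefficients are the canonical maps $\Sym^k(\s\g) \to \us\overline{\Sym}(\s\g) \hookrightarrow \LF(\us\overline{\Sym}(\s\g)) = \ST(\g)$, i.e. $\eta$ simply records each symmetric word $x_1 \odot \cdots \odot x_k$ as the corresponding generator. If $x_i \in F_{l_i}(\g)$, that generator lies in $F_{l_1+\cdots+l_k}$ of $\us\overline{\Sym}(\s\g)$ by construction, hence in $F_{l_1+\cdots+l_k}(\ST(\g))$; so the condition $\eta_k(x_1,\dots,x_k) \in F_{l_1+\cdots+l_k}(\ST(\g))$ from the definition of a filtered morphism holds. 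I expect the main obstacle to be purely bookkeeping: matching the sign- and combinatorial conventions in the cocommutative-coalgebra model of $\CE$, the desuspension in $\LF$, and the free-Lie-algebra filtration of Lemma~\ref{operations on filtrations}, so that ``filtration degree of a word = sum of filtration degrees of its letters'' is respected consistently through all three functors. None of the individual estimates is deep; the care lies in setting up the filtered structures on $\CE(\g)$ and on $\us\overline{\CE(\g)}$ so that Lemma~\ref{operations on filtrations} applies verbatim.
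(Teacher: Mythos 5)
Your proposal is correct and follows essentially the same route as the paper: filter $\ST(\g)$ as the free Lie algebra on $\us\overline{\Sym}(\s\g)$ via Lemma~\ref{operations on filtrations}, check the two summands of the differential separately (bracket-additivity of the filtration for the Chevalley--Eilenberg part, splitting of words into subwords for the coproduct part), and then bound $\eta$ componentwise. The only caveat is that the paper's explicit formula for $\eta_k$ contains further terms built from generators labeled by subwords of $x_1,\dots,x_k$, not only the single generator you describe, but these terms satisfy the same filtration estimate, so your conclusion is unaffected.
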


\begin{proof}
Recall that the Lie algebra $\ST(\g)$ is the free Lie algebra on the vector space $V=\us \overline{\Sym}(\s \g)$.  In view of
Lemma~\ref{operations on filtrations},  we know that there is a filtration on $\ST(\g)$ seen as a Lie algebra.
We need to prove that this filtration is compatible with the differential, i.e.,\ that
 $\delta (F_k(\ST(\g)))\subset F_k(\ST(\g)). $
Since $\delta$ is a derivation with respect to the Lie bracket, it suffices to prove the claim for elements of $V$.
The differential $\delta$ is the sum of two coboundary operators: one induced from that of $\g$ and one induced from the
coproduct. The claim is clearly true for the first differential. Let us prove it for the differential that comes from the coproduct, given by
\[\us (\s {x}_1\otimes \dots \otimes \s {x}_{n})\mapsto  -\hspace{-2pt} \sum_i (-1)^{|x_1|+ \dots +|x_i|+i}\us (\s {x}_1\otimes \dots \otimes \s x_i)  \otimes \us(\s x_{i+1}\otimes \dots \otimes \s {x}_{n}).\]
Since the right-hand side is the sum of Lie monomials on the same
elements, we conclude that if the left-hand side belongs to $F_k(\ST(\g))$,
so does the right-hand side.

So far, we have seen that the differential graded Lie algebra $\ST(\g)$
inherits a filtration; it remains to show that the map $\eta\colon \g \rightarrow \ST(\g)$ is a filtered map. The components of this map are given by the formula
\begin{eqnarray*}
 \eta_k(&x_1&\otimes \dots \otimes x_k)=\pm \s\us(\s x_1 \otimes \dots \otimes \s x_k) \\
&&+  \sum_{k_1+k_2=k}\sum_{\sigma \in (k_1,k_2)} \pm \s\us(\s x_{\sigma(1)}\otimes \cdots \otimes \s x_{\sigma(k_1)})\otimes \s \us (\s x_{\sigma(k_1+1)}\otimes \cdots \otimes \s x_{\sigma(k)}) \\
&& + \cdots,
\end{eqnarray*}
and therefore if $x_i \in F_{l_i}(\g)$ then $\eta_k(x_1\otimes \dots \otimes x_k)\in F_{l_1 + \dots + l_k}(\ST(\g)),$
and we conclude that $\eta$ is a filtered map.
\end{proof}

\begin{definition}
A filtration of an augmented differential graded algebra $A$ is a filtration of its augmentation ideal.
A filtered augmented differential graded algebra $A$ is an augmented differential graded algebra with a filtration.
\end{definition}


\begin{lemma}\label{lemma:filtration_enveloping}
The universal enveloping functor $\U: \dgla \to \dga$ extends to a functor from the category of filtered differential graded Lie algebras to the category of filtered differential graded algebras as follows:
\begin{enumerate}
 \item For $\g$ a filtered differential graded Lie algebra, the augmentation ideal of $\U(\g)$ carries the filtration inherited from $\T\g$.
 \item For $f: \g\to \mathfrak{h}$ a filtered morphism of differential graded Lie algebras algebras, the morphism $\U(f): \U(\g)\to \U(\mathfrak{h})$
is a filtered morphism.
\end{enumerate}
\end{lemma}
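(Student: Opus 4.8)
## Proof plan

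The plan is to check the two claims directly from the explicit description of $\U(\g) = \T\g / \mathcal{I}$, where $\mathcal{I}$ is the two-sided ideal generated by the elements $x\otimes y - (-1)^{|x||y|} y\otimes x - [x,y]$. First I would use Lemma~\ref{operations on filtrations}, which endows the reduced tensor algebra $\overline{\T}\g$ with the filtration $F_k(\overline{\T}\g) = \sum_{l_1 + \dots + l_r \geq k} F_{l_1}(\g) \otimes \cdots \otimes F_{l_r}(\g)$; since the augmentation ideal of $\U(\g)$ is by definition the image of $\overline{\T}\g$ under the quotient map $q\colon \T\g \to \U(\g)$, the natural candidate for the filtration on it is $F_k(\underline{\U(\g)}) := q(F_k(\overline{\T}\g))$. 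The key point is then to verify that this is genuinely a filtration in the sense of Section~\ref{subsection:filtered}: it is a decreasing sequence of ideals (clear, since $q$ is surjective and multiplicative and the $F_k(\overline{\T}\g)$ are a decreasing chain of ideals of $\overline{\T}\g$), it is compatible with the multiplicativity condition $F_{k}\cdot F_{l}\subseteq F_{k+l}$ (inherited from $\overline{\T}\g$), and — the one genuinely non-formal point — the intersection $\bigcap_k F_k(\underline{\U(\g)})$ is zero.

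For the compatibility with the differential $d_\U$: since $d_\U$ is induced from $d_\T$, and $d_\T$ applies the internal differential $d$ of $\g$ one tensor factor at a time, it preserves word length and (because $d$ preserves the filtration on $\g$, as $\g$ is a filtered complex) sends $F_{l_1}(\g)\otimes\cdots\otimes F_{l_r}(\g)$ into a sum of terms of the same shape; hence $d_\T(F_k(\overline{\T}\g)) \subseteq F_k(\overline{\T}\g)$, and passing to the quotient gives $d_\U(F_k) \subseteq F_k$. For functoriality, given a filtered morphism $f\colon\g\to\mathfrak h$ of dg Lie algebras, the induced map $\T(f)$ sends $F_{l_1}(\g)\otimes\cdots\otimes F_{l_r}(\g)$ into $F_{f(l_1)}(\mathfrak h)\otimes\cdots$ with $f(l_i)\geq l_i$ by definition of a filtered morphism, so $\T(f)(F_k(\overline{\T}\g)) \subseteq F_k(\overline{\T}\mathfrak h)$; since $\T(f)$ descends to $\U(f)$ and the quotient maps intertwine everything, $\U(f)$ is filtered. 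These verifications are all routine bookkeeping with the filtration indices.

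The main obstacle is the separatedness condition $\bigcap_k F_k(\underline{\U(\g)}) = 0$. The subtlety is that $q$ is not injective, so one cannot simply pull back separatedness from $\overline{\T}\g$; a priori the ideal relations could identify a deep filtration element with a shallow one. The way I would handle this is via the PBW-type filtration: the associated graded algebra $\mathrm{gr}\,\U(\g)$ with respect to the word-length filtration is (a quotient of) the symmetric algebra $\Sym(\g)$ by the Poincaré--Birkhoff--Witt theorem for dg Lie algebras, so the word-length filtration on $\underline{\U(\g)}$ is separated; one then checks that the filtration $F_k(\underline{\U(\g)})$ is, in each fixed word-length degree, induced from the (separated, since $\bigcap_k F_k(\g) = 0$) filtration on $\Sym^{\leq n}(\g)$, and a finite induction on word length $n$ together with the separatedness of $\g$'s own filtration forces the intersection to vanish. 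Alternatively, and perhaps more cleanly, one observes that $F_2(\g) \subseteq \g$ is an ideal with $\g/F_2(\g)$ finite-type-ish in each relevant sense, $\U(\g)/\U(\g)\!\cdot\!q(F_k(\overline{\T}\g))$ surjects compatibly, and one identifies $\varprojlim_k \U(\g)/F_k$ with $\U(\hat\g)$-like completions; but the PBW argument is the most transparent and I would present that.
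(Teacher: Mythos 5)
Your plan follows essentially the same route as the paper for everything the paper actually proves: the filtration on the augmentation ideal of $\U(\g)$ is the image under the quotient map of the filtration on $\overline{\T}\g$ from Lemma~\ref{operations on filtrations}, and the ideal property, multiplicativity, compatibility with $d_\U$, and functoriality are all inherited formally. The paper's entire proof is this, plus the single observation that the defining relations $x\otimes y-(-1)^{|x||y|}y\otimes x-[x,y]$ lie in $F_{k+l}(\overline{\T}\g)$ when $x\in F_k(\g)$, $y\in F_l(\g)$; it is silent on the separatedness condition $\bigcap_k F_k=0$, which you rightly single out as the only non-formal point. Your PBW strategy for it is the natural one, and it echoes the paper's own use of $\mathrm{gr}\,\U(\g)\cong\Sym(\g)$ in the proof of Corollary~\ref{corollary:denseness}; but two steps in your sketch need more care than you indicate. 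First, an element of $F_k\underline{\U(\g)}$ of PBW word length $n$ is a priori the image of a tensor containing words of length greater than $n$, so before comparing symbols in $\Sym^n(\g)$ one must rewrite those longer words modulo the relations without leaving weight $\ge k$ --- this is exactly where the paper's highlighted fact about the relations being filtered enters, and it is what makes your claim that the filtration is ``induced from $\Sym^{\le n}(\g)$ in each word length'' true rather than formal. Second, the resulting statement $\bigcap_k F_k\Sym^n(\g)=0$ is itself not automatic, because a filtered vector space need not admit a basis adapted to the filtration (e.g.\ a complete one does not); one argues instead that finitely many linearly independent vectors remain independent in $\g/F_m(\g)$ for $m$ large, which uses $\bigcap_m F_m(\g)=0$. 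Your fallback ``alternative'' via identifying $\varprojlim_k\U(\g)/F_k$ with a completion built from $\hat\g$ would not by itself yield separatedness (completions do not detect it), so you are right to prefer the PBW argument.
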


\begin{proof}
This follows from the definitions and the fact that the expression $x\otimes y - (-1)^{|x||y|}y\otimes x -[x,y]$ lies in $F_{k+l}(\overline{\T}\g)$
for $x\in F_k(\g)$ and $y\in F_l(\g)$.
\end{proof}

\begin{corollary}
The universal enveloping algebra $\Uinfty(\g)$ of a filtered $L_\infty$-algebra $\g$ is naturally a filtered augemented differential graded algebra.
\end{corollary}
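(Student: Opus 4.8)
The plan is to combine the two functorial facts about filtrations established just above with the definition $\Uinfty := \U\circ\ST$. First I would recall that, by Lemma~\ref{lemma:filtration_strictification}, the strictification $\ST(\g)$ of a filtered $L_\infty$-algebra $\g$ carries an induced filtration of its underlying differential graded Lie algebra, obtained from the filtration on the free Lie algebra on $V=\us\overline{\Sym}(\s\g)$ via Lemma~\ref{operations on filtrations}; moreover this filtration is compatible with the differential $\delta$, so $\ST(\g)$ is genuinely a filtered differential graded Lie algebra. Then I would apply Lemma~\ref{lemma:filtration_enveloping}: the universal enveloping functor $\U$ sends filtered differential graded Lie algebras to filtered augmented differential graded algebras, with the augmentation ideal of $\U(\ST(\g))$ carrying the filtration inherited from the tensor algebra $\T(\ST(\g))$. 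Composing, $\Uinfty(\g) = \U(\ST(\g))$ is an augmented differential graded algebra whose augmentation ideal is filtered, i.e.\ a filtered augmented differential graded algebra in the sense of the definition preceding Lemma~\ref{lemma:filtration_enveloping}.

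The only point that requires a sentence of justification beyond "compose two lemmas" is \emph{naturality}: I would observe that a filtered morphism $\phi\colon\g\to\mathfrak{h}$ of filtered $L_\infty$-algebras is sent by $\ST = \LF\circ\CE$ to a filtered morphism $\ST(\phi)\colon\ST(\g)\to\ST(\mathfrak{h})$ of filtered differential graded Lie algebras — this is implicit in the proof of Lemma~\ref{lemma:filtration_strictification}, since $\CE(\phi)$ respects the filtration on $\overline{\Sym}(\s\,\cdot)$ and $\LF$ respects the filtration on the free Lie algebra, both by Lemma~\ref{operations on filtrations} — and then by part (2) of Lemma~\ref{lemma:filtration_enveloping} the morphism $\Uinfty(\phi) = \U(\ST(\phi))$ is a filtered morphism of differential graded algebras. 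Hence $\Uinfty$ lifts to a functor from filtered $L_\infty$-algebras to filtered augmented differential graded algebras, which is exactly the content of the corollary.

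I do not expect any real obstacle here: the statement is a formal corollary, and each ingredient has already been proved (or its proof omitted by the authors "for brevity") in the two preceding lemmas. If anything, the one mildly delicate bookkeeping issue is checking that the filtration on $\T(\ST(\g))$ used in Lemma~\ref{lemma:filtration_enveloping}, namely $F_k(\overline{\T}V) = \sum_{l_1+\dots+l_r\geq k} F_{l_1}(V)\otimes\cdots\otimes F_{l_r}(V)$ from Lemma~\ref{operations on filtrations}, is indeed descended correctly to the quotient defining $\U$ — but this is precisely the content of the displayed computation in the proof of Lemma~\ref{lemma:filtration_enveloping}, that $x\otimes y - (-1)^{|x||y|}y\otimes x - [x,y]\in F_{k+l}$, so I would simply cite it. The proof is therefore a two-line deduction.

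\begin{proof}
By Lemma~\ref{lemma:filtration_strictification}, if $\g$ is a filtered $L_\infty$-algebra then $\ST(\g)$ is naturally a filtered differential graded Lie algebra, and a filtered morphism $\phi\colon\g\to\mathfrak{h}$ induces a filtered morphism $\ST(\phi)\colon\ST(\g)\to\ST(\mathfrak{h})$. Applying Lemma~\ref{lemma:filtration_enveloping} to $\ST(\g)$, the augmentation ideal of $\U(\ST(\g))=\Uinfty(\g)$ inherits a filtration from $\T(\ST(\g))$, so $\Uinfty(\g)$ is a filtered augmented differential graded algebra; and $\U(\ST(\phi))=\Uinfty(\phi)$ is a filtered morphism. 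Thus $\Uinfty$ lifts to a functor from filtered $L_\infty$-algebras to filtered augmented differential graded algebras.
\end{proof}
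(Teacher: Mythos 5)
Your proof is correct and follows exactly the paper's argument: the corollary is obtained by composing Lemma~\ref{lemma:filtration_strictification} with Lemma~\ref{lemma:filtration_enveloping}, which is precisely what the authors do (their proof is the one-line statement that it is a direct consequence of those two lemmas). Your extra remarks on naturality and on the filtration descending to the quotient are consistent with, and slightly more explicit than, what the paper records.
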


\begin{proof}
This is a direct consequence of  Lemmas~\ref{lemma:filtration_strictification} and~\ref{lemma:filtration_enveloping}.
\end{proof}



\begin{remark}
\hspace{0cm}
\begin{enumerate}
\item Recall from \cite{Getzler} that if $\g$ is an $L_\infty$-algebra and $A$ is a differential
graded commutative algebra then the tensor product $\g \otimes A$ is an $L_\infty$-algebra with brackets:
\begin{equation*}
\begin{cases}
[x \otimes a]=[x]\otimes a +(-1)^{|x|+1}x\otimes da,\\
[x_1 \otimes a_1,\dots , x_k \otimes a_k]=(-1)^{\sum_{i<j}|a_i|(|x_j|+1)}[x_1,\dots,x_k]\otimes a_1\dots a_k, \quad k\neq 1.

\end{cases}\hspace*{-2.3pt}
\end{equation*}
Observe that $-\otimes A$ extends to a functor: given a morphism $\gamma$ of $L_\infty$-algebras, one defines
$ \gamma\otimes \id_A: \g\otimes A \to \mathfrak{h}\otimes A$
to be given by the structure maps
 \begin{eqnarray*}
 &&(\gamma\otimes \id_A)_k((\s x_1 \otimes a_1)\otimes \cdots \otimes (\s x_k\otimes a_k)) :=\\ && \quad (-1)^{\sum_{i<j}|a_i|(|x_j|+1)} \gamma_k(\s x_1\otimes \cdots \s x_k) \otimes (a_1 \cdots a_k),
 \end{eqnarray*}
where we see an element $\s x\otimes a$ in $\s (\g \otimes A)$ via the map
$$ \s(\g \otimes A) \cong \s \g \otimes A, \quad \s (x\otimes a) \mapsto \s x \otimes a.$$

\item If $\g$ is filtered, $\g \otimes A$ is a filtered $L_{\infty}$-algebra with filtration:
$F_k(\g \otimes A):=F_k(\g)\otimes A$.
Moreover, if $\gamma: \g \to \mathfrak{h}$ is a morphism of filtered $L_\infty$-algebras, so is $\gamma\otimes \id_A$.

The operation $-\otimes A$ is functorial and so---see
Lemma~\ref{functoriality of completion}---we have a commutative diagram:

\[
\xymatrix{
\g \otimes A \ar[r]^{\iota \otimes A} \ar[d]_\iota& \hat{\g}\otimes A\ar[d]^\iota\\
\widehat{(\g \otimes A)}\ar[r]^{\widehat{\iota \otimes A}}& \widehat{ (\hat{\g}\otimes A)}.
}
\]
\item Similar statements apply if one replaces $\g$ by a (filtered) differential graded algebra and drops the commutativity of $A$.
\end{enumerate}
\end{remark}

The following lemma is straightforward to check:

\begin{lemma}\label{lemma:denseness}
\hspace{0cm}
\begin{itemize}
\item Let $V$ be a filtered graded vector space. Then $\T V$ is dense in $\T \hat{V}$.
\item Let $\g$ be a filtered differential graded Lie algebra. Then $\U(\g)$ is dense in $\U(\hat{\g})$.
\item Let $\g$ be a filtered $L_\infty$-algebra. Then $\ST(\g)$ is dense in $\ST(\hat{\g})$.
Moreover, if $A$ is a commutative differential graded algebra, then $\g\otimes A$ is dense in $\hat{\g}\otimes A$.
\end{itemize}
\end{lemma}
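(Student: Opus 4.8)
The plan is to reduce all four assertions to two elementary observations plus one concrete multilinear estimate, keeping in mind that ``dense'' here means dense for the linear topology attached to the filtration, so that $W\subseteq X$ is dense exactly when $W+F_k(X)=X$ for all $k$; in particular no completeness of the ambient spaces is used. The first observation is that $\iota\colon V\to\hat V$ always has dense image: the projection $\hat V\to V/F_k(V)$ is surjective and its composite with $\iota$ is the quotient map $V\to V/F_k(V)$, whence $\iota(V)+F_k(\hat V)=\hat V$, where $F_k(\hat V)=\ker(\hat V\to V/F_k(V))$; this also gives $\hat V/F_k(\hat V)\cong V/F_k(V)$. The second observation is that a \emph{strict} morphism $f\colon X\to Y$ of filtered graded vector spaces (surjective with $f(F_k(X))=F_k(Y)$) sends dense subspaces to dense subspaces, since $f(W)+F_k(Y)=f(W+F_k(X))=f(X)=Y$. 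By the definitions of the filtrations in Lemmas~\ref{operations on filtrations} and~\ref{lemma:filtration_enveloping}, the symmetrization projection $\overline{\T}(V)\to\overline{\Sym}(V)$ and the projection $\overline{\T}\g\to\underline{\U(\g)}$ are strict surjections.

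The one computation I would carry out is the following: if $W\subseteq V$ is dense, then $W^{\otimes n}\subseteq V^{\otimes n}$ is dense for the filtration of Lemma~\ref{operations on filtrations}. Writing an element of $V^{\otimes n}$ as a finite sum $\xi=\sum_j x^{(j)}_1\otimes\cdots\otimes x^{(j)}_n$ and choosing, for a given $k$, elements $w^{(j)}_i\in W$ with $x^{(j)}_i-w^{(j)}_i\in F_k(V)$, one expands each difference $x^{(j)}_1\otimes\cdots\otimes x^{(j)}_n-w^{(j)}_1\otimes\cdots\otimes w^{(j)}_n$ telescopically into a sum of elementary tensors each of which has at least one factor in $F_k(V)$ and the remaining factors in $V=F_1(V)$, hence lies in $F_k(V^{\otimes n})$; summing over $j$ shows $\xi-\sum_j w^{(j)}_1\otimes\cdots\otimes w^{(j)}_n\in F_k(V^{\otimes n})$. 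The same argument, with elementary tensors replaced by Lie monomials $P(x_1,\dots,x_r)$, shows that $W\subseteq V$ dense implies $\LF(W)\subseteq\LF(V)$ dense, since a Lie monomial at least one of whose arguments lies in $F_k(V)$ lies in $F_k(\LF(V))$.

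With these in hand I would derive the four statements as follows. Summing over tensor degree (each element of $\T V$ has finitely many components) and combining the first observation with the tensor estimate gives density of $\T V$ in $\T\hat V$. For the enveloping algebra I would use the commuting square
\begin{align*}
\xymatrix{
\overline{\T}\g \ar[r] \ar[d] & \overline{\T}\hat{\g} \ar[d] \\
\underline{\U(\g)} \ar[r] & \underline{\U(\hat{\g})}
}
\end{align*}
whose horizontal maps are induced by $\iota$ and whose vertical maps are the strict tensor-algebra projections; the top map has dense image by the previous case, so the second observation forces the image of $\overline{\T}\g$ in $\underline{\U(\hat\g)}$ --- which is the image of $\underline{\U(\g)}$ --- to be dense. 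For the strictification, apply the tensor case to $\s\g$ (note $\widehat{\s\g}=\s\hat{\g}$), push it forward along the strict symmetrization projection to get $\overline{\Sym}(\s\g)$ dense in $\overline{\Sym}(\s\hat{\g})$, and then apply the Lie-monomial estimate to conclude that $\ST(\g)=\LF(\us\overline{\Sym}(\s\g))$ is dense in $\ST(\hat\g)=\LF(\us\overline{\Sym}(\s\hat{\g}))$ (the differentials are irrelevant to a statement about the underlying filtered spaces; the filtration is the one of Lemma~\ref{lemma:filtration_strictification}). Finally, since $F_k(\hat\g\otimes A)=F_k(\hat\g)\otimes A$ and $A$ is flat over the ground field, one has $(\hat\g\otimes A)/(F_k(\hat\g)\otimes A)\cong(\hat\g/F_k(\hat\g))\otimes A\cong(\g/F_k(\g))\otimes A$, onto which $\g\otimes A$ maps surjectively, so $\g\otimes A+F_k(\hat\g)\otimes A=\hat\g\otimes A$.

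I expect the only non-formal step to be the telescoping estimate of the second paragraph; the rest is bookkeeping. The points that need care are that the filtrations on $\overline{\Sym}$, $\LF$ and $\U$ are literally the image/subspace filtrations induced from the tensor-algebra filtration, so that ``strict surjection'' and the Lie-monomial estimate apply verbatim, and that the natural maps $\T(\iota)$, $\U(\iota)$, $\ST(\iota)$ and $\iota\otimes\id_A$ are injective --- for $\U(\iota)$ and $\ST(\iota)$ this rests on the Poincar\'e--Birkhoff--Witt theorem, together with the fact that $\Sym$ and $\LF$ preserve injections over a field --- so that the four inclusions are meant literally rather than only up to image.
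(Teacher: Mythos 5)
Your proof is correct. The paper actually omits any proof of this lemma (it is simply declared ``straightforward to check''), and your argument --- recasting density as $W+F_k(X)=X$ for all $k$, the telescoping estimate for elementary tensors and Lie monomials, transporting density through the strict projections onto $\overline{\Sym}$ and onto the augmentation ideal of $\U$, and the quotient computation $(\hat{\g}\otimes A)/(F_k(\hat{\g})\otimes A)\cong(\g/F_k(\g))\otimes A$ --- is precisely the routine verification intended, while your closing remark on injectivity of $\U(\iota)$ via Poincar\'e--Birkhoff--Witt matches what the paper itself only supplies later, in the proof of Corollary~\ref{corollary:denseness}.
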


\begin{corollary}\label{corollary:denseness}
\hspace{0cm}
\begin{itemize}
\item Let $V$ be a filtered graded vector space. Then $\widehat{\T V}$ is naturally isomorphic to $\widehat{\T \hat{V}}$.
\item Let $\g$ be a filtered differential graded Lie algebra. Then $\widehat{\U(\g)}$ is naturally isomorphic to $\widehat{\U(\hat{\g})}$.
\item Let $\g$ be a filtered $L_\infty$-algebra. Then $\widehat{\ST(\g)}$ is naturally isomorphic to $\widehat{\ST(\hat{\g})}$.
Moreover, if $A$ is a commutative differential graded algebra, then $\widehat{\g\otimes A}$ is naturally isomorphic to $\widehat{\hat{\g}\otimes A}$.

\end{itemize}
\end{corollary}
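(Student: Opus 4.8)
The plan is to deduce the corollary formally from the denseness assertions of Lemma~\ref{lemma:denseness} by means of a single general observation: a dense filtered subspace, equipped with the induced filtration, has the same completion as the ambient space. First I would record the following elementary fact. Let $\iota\colon W\hookrightarrow X$ be an injective filtered morphism of filtered graded vector spaces such that $F_k(W)=W\cap F_k(X)$ for every $k$ (so that $W$ carries the filtration induced from $X$) and such that $W$ is dense in $X$, i.e.\ $W+F_k(X)=X$ for every $k$. Then for each $k$ the induced map $W/F_k(W)\to X/F_k(X)$ is an isomorphism --- injective because $F_k(W)=W\cap F_k(X)$, surjective because $W+F_k(X)=X$ --- and passing to the inverse limit over $k$ yields an isomorphism $\hat\iota\colon\hat W\to\hat X$.

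Next I would apply this in the four cases. In each of them the relevant inclusion is obtained by applying a functor to the canonical inclusion $V\hookrightarrow\hat V$ (resp.\ $\g\hookrightarrow\hat\g$): the functor $\T$, the functor $\U$, the functor $\ST$, and the functor $-\otimes A$. These inclusions are filtered morphisms by Lemmas~\ref{operations on filtrations}, \ref{lemma:filtration_strictification} and~\ref{lemma:filtration_enveloping} (and the remark on $\g\otimes A$ above), and they are dense by Lemma~\ref{lemma:denseness} itself. What still needs to be checked is that $\T V$, $\U(\g)$, $\ST(\g)$ and $\g\otimes A$ carry the filtrations \emph{induced} from $\T\hat V$, $\U(\hat\g)$, $\ST(\hat\g)$ and $\hat\g\otimes A$, respectively. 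For the inclusion $V\hookrightarrow\hat V$ itself this is immediate, since $F_k(\hat V)=\ker(\hat V\to V/F_k(V))$ and hence $V\cap F_k(\hat V)=F_k(V)$. For $\T$, $\U$ and $\ST$ the filtrations of Lemma~\ref{operations on filtrations} are defined by the same finitary formulas over $V$ and over $\hat V$, and since we work over a field --- so tensor products of vector spaces are exact and commute with intersections, and the projections $\hat V\to\hat V/F_l(\hat V)$ detect filtration degrees --- a direct inspection of these formulas shows that an element of the smaller space which lies in the $k$-th filtration piece of the larger space already lies in the $k$-th filtration piece of the smaller one. For $-\otimes A$, exactness of $-\otimes_{\mathbb R}A$ gives $(\g\otimes A)\cap(F_k(\hat\g)\otimes A)=(\g\cap F_k(\hat\g))\otimes A=F_k(\g)\otimes A$ at once. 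Applying the fact from the first paragraph now produces all four isomorphisms.

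Finally, naturality in $V$ (resp.\ in $\g$) is automatic: the completion is a functor by Lemma~\ref{functoriality of completion}, the functors $\T$, $\U$, $\ST$ and $-\otimes A$ are functorial, and the canonical map $\iota$ is a natural transformation, so the isomorphisms constructed above assemble into natural isomorphisms of functors. I expect the only genuine work to be the verification in the second paragraph that the ambient filtrations restrict to the given ones --- equivalently, the injectivity of each map $W/F_k(W)\to X/F_k(X)$; granted that, the corollary is a formal consequence of Lemma~\ref{lemma:denseness} and of the functoriality of completion.
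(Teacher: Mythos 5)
Your general reduction---a dense subspace that carries the induced filtration has the same completion as the ambient space---is exactly the skeleton underlying the paper's proof, and your handling of $\T V$, $\ST(\g)$ and $\g\otimes A$ is essentially fine. The genuine gap is in the $\U$ case: you declare all four maps to be inclusions on the grounds that they are ``obtained by applying a functor to the canonical inclusion'', but functors do not preserve injectivity, and $\U$ is defined as a quotient of the tensor algebra, so injectivity of $\U(\g)\to \U(\hat{\g})$ is a Poincar\'e--Birkhoff--Witt-type statement rather than a formality. The paper flags this as the only non-obvious point and proves it by equipping $\U$ with the filtration by images of $\T^{\le k}$, passing to the associated graded, and using $\mathrm{gr}\,\U(\g)\cong \Sym(\g)$ together with the fact that $\mathrm{gr}\,\U(i)=\Sym(i)$ is injective for any inclusion $i$ of graded Lie algebras. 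Without this step your key hypothesis $F_k(W)=W\cap F_k(X)$ is not even well posed for $W=\U(\g)$ (there is no a priori embedding), and your verification of injectivity of $W/F_k(W)\to X/F_k(X)$, which presupposes that embedding, becomes circular.

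Relatedly, your ``direct inspection of the finitary formulas'' does not cover $\U$ either: by Lemma~\ref{lemma:filtration_enveloping} the filtration on $\U(\g)$ is the \emph{image} of the filtration on $\overline{\T}\g$ under the quotient map, so the claim that it coincides with the filtration induced from $\U(\hat{\g})$ is not visible from generators and again calls for an associated-graded argument of the same kind. So the missing ingredient relative to the paper is precisely the PBW-style injectivity (and the attendant strictness of the filtration) for the enveloping algebra; once you supply that, the rest of your argument goes through and agrees with the paper's.
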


\begin{proof}
This follows from Lemma~\ref{lemma:denseness} and the fact that all the maps
$\T V \to \T \hat{V}$, $\U(\g) \to \U(\hat{\g})$, $\ST(\g)\to
\ST(\hat{\g})$, and $\g\otimes A \to \hat{\g}\otimes A$
are inclusions. This is obvious except for $\U(\g) \to \U(\hat{\g})$.
We are done if we can prove that for any graded Lie subalgebra $i: \mathfrak{h}\to \g$,
the induced map $\U(i): \U(\mathfrak{h}) \to \U(\g)$ is injective. But this is the case if
$$\mathrm{gr}\U(i): \mathrm{gr}\U(\mathfrak{h})  \to \mathrm{gr}\U(\mathfrak{g})$$
is injective. Here $\mathrm{gr}$ denotes the functor that maps a filtered vector space to its associated graded
and $\U(\g)$ is seens as a filtered vector space with the filtration whose members $\mathfrak{F}_k\U(\g)$ are
the images of $\T^{\le k}(\g)$ under the quotient map.\footnote{Strictly speaking, this kind of filtration is opposite to the way we defined them.}
However, $\mathrm{gr}\U(\g)$ is canonically isomorphic to $\mathsf{S}(\g)$, the graded symmetric
algebra of $\g$, and $\mathrm{gr}\U(i)$ corresponds to $\mathsf{S}(i)$.
It is clear that $\mathsf{S}(i)$ is injective.
\end{proof}

\begin{definition}
Given\hspace{-0.5pt} a filtered\hspace{-0.5pt} $L_\infty$-algebra\hspace{-0.5pt} $\g$\hspace{-0.5pt} and\hspace{-0.5pt} a commutative differential graded algebra $A$,
we denote the completion of the $L_\infty$-algebra $\g\otimes A$ by $\g \hat{\otimes} A$.

Given a filtered $L_\infty$-algebra $\g$, we denote the completion of the universal enveloping algebra $\Uinfty(\g)$ by $\hatUinfty(\g)$.
\end{definition}

\begin{definition}
Let $V$ be a graded vector space and $W$ be a filtered vector space.

The graded vector space $\Hom(V,W)$ carries a filtration defined by
$$ \phi \in F_k\Hom(V,W) \quad :\Leftrightarrow \quad \mathrm{im}(\phi) \subset F_kW.$$
\end{definition}

\begin{lemma}
Let $V$ be a graded vector space and $W$ a complete vector space.
Then $\Hom(V,W)$ is complete.
\end{lemma}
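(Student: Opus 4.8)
The plan is to compute the completion $\widehat{\Hom(V,W)}$ explicitly and identify it with $\Hom(V,W)$ via the canonical map. The starting observation is that, by the very definition of the filtration on $\Hom(V,W)$, one has $F_k\Hom(V,W)=\Hom(V,F_kW)$: a linear map $\phi\colon V\to W$ lies in $F_k\Hom(V,W)$ precisely when $\mathrm{im}(\phi)\subset F_kW$, which is the same as saying $\phi$ factors through the inclusion $F_kW\hookrightarrow W$. In particular $\bigcap_k F_k\Hom(V,W)=\Hom(V,\bigcap_k F_kW)=\Hom(V,0)=0$, so the filtration is Hausdorff and the canonical map $\iota\colon\Hom(V,W)\to\widehat{\Hom(V,W)}$ is injective; it remains to prove surjectivity.

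The key step is to identify the quotients. I claim that post-composition with the quotient map $q_k\colon W\to W/F_kW$ induces a natural isomorphism
\[
\Hom(V,W)\big/F_k\Hom(V,W)\ \xrightarrow{\ \cong\ }\ \Hom(V,W/F_kW).
\]
Injectivity is immediate: if $q_k\circ\phi=0$ then $\mathrm{im}(\phi)\subset F_kW$, i.e.\ $\phi\in F_k\Hom(V,W)$. Surjectivity is where the hypothesis on $V$ enters: working over a field, the surjection $q_k\colon W\to W/F_kW$ of graded vector spaces splits, so given any $\psi\colon V\to W/F_kW$ we may lift it to a linear map $V\to W$ (degreewise, by choosing a homogeneous basis of $V$ and lifting the values, or by composing $\psi$ with a chosen splitting). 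This gives the desired isomorphism, compatibly with the structure maps $W/F_{k+1}W\to W/F_kW$.

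Passing to the inverse limit over $k$ and using that $\Hom(V,-)$ commutes with limits (a compatible family of maps $V\to W/F_kW$ is exactly a map $V\to\varprojlim_k W/F_kW$), we obtain
\[
\widehat{\Hom(V,W)}=\varprojlim_k\Hom(V,W)/F_k\Hom(V,W)\cong\varprojlim_k\Hom(V,W/F_kW)\cong\Hom\big(V,\varprojlim_k W/F_kW\big)=\Hom(V,\hat{W}).
\]
Tracing through these identifications, the composite $\Hom(V,W)\to\Hom(V,\hat W)$ is simply post-composition with $\iota_W\colon W\to\hat W$. Since $W$ is complete, $\iota_W$ is an isomorphism, hence so is $\Hom(V,\iota_W)$, and this composite equals the canonical map $\iota$ above. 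Therefore $\iota$ is an isomorphism and $\Hom(V,W)$ is complete.

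The only nontrivial point is the splitting argument identifying the quotient with $\Hom(V,W/F_kW)$; everything else is formal manipulation of limits. Since we are over a field this splitting is automatic, so I do not expect any real obstacle — the proof is short.
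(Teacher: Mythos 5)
Your proof is correct, but it takes a different route from the paper's. The paper argues by sequential completeness: given a Cauchy sequence $\phi_i$ in $\Hom(V,W)$, it defines $\phi(x):=\lim_i\phi_i(x)$ pointwise, using completeness of $W$ to make sense of the limit, and observes that $\phi$ is linear because the topology comes from the filtration. You instead work directly with the paper's inverse-limit definition of completeness: you identify $F_k\Hom(V,W)=\Hom(V,F_kW)$, prove $\Hom(V,W)/F_k\Hom(V,W)\cong\Hom(V,W/F_kW)$ by lifting along $q_k$ (using that $V$ is free over the ground field, so the surjection splits), and then commute $\Hom(V,-)$ with the limit to get $\widehat{\Hom(V,W)}\cong\Hom(V,\hat W)=\Hom(V,W)$. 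What each buys: the paper's argument is shorter, but it tacitly uses the equivalence between Cauchy-sequence completeness and the limit definition of Definition~\ref{definition:completeness} (valid here since the filtration topology is metrizable, but unremarked); your argument matches that definition verbatim, is purely algebraic, makes explicit where the field/freeness hypothesis enters, and actually establishes the slightly finer natural isomorphism $\widehat{\Hom(V,W)}\cong\Hom(V,\hat W)$, which is close to what the subsequent remark in the paper needs. One small caution: the identifications should be read degreewise, i.e.\ completions and limits of graded objects taken in the graded sense, so that each homogeneous component of $\Hom(V,-)$ (a product over degrees) indeed commutes with the inverse limit; this is a matter of convention rather than a gap.
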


\begin{proof}
Given a Cauchy sequence $\phi_i$ in $\Hom(V,W)$, we define $\phi: V\to W$ via
$$\phi(x):= \lim_{i \to \infty} \phi_i(x).$$
By definition of the filtration on $\Hom(V,W)$, the sequence $\phi_i(x)$ will be Cauchy and since $W$ is complete,
$\phi(x)$ is well-defined.
Because $W$ is a topological vector space with respect to the topology induced from the filtration,
the map $\phi$ is a linear map.
\end{proof}

\begin{remark}
Given a graded vector space $W$ and a filtered graded vector space $V$, there is a natural inclusion of filtered graded
vector spaces
$W^*\otimes V \to \Hom(V,W)$.
The above lemma implies that the completion $\widehat{W^*\otimes V}$ can be naturally
identified with a subspace of $\Hom(V,\hat{W})$.
\end{remark}

\section{Parallel transport}

\subsection{$\Ainfty$ de Rham Theorem} \label{subsection:deRham}

We briefly describe an $\Ainfty$ version of de Rham's theorem that is due to Gugenheim \cite{G}. It is the key
ingredient in the definition of higher holonomies in the next subsection.
The construction involves a family of maps from cubes to simplices  introduced by Chen \cite{Chen}. We use the maps given by Igusa in \cite{I}.
Let us now recall Gugenheim's morphism from \cite{G}, following the conventions of \cite{AS}, where the
interested reader can find more details.

Let $M$ be a smooth manifold, and denote by $\Path M$ the path space of $M$.
The first ingredient for the $\Ainfty$ de Rham theorem is Chen's map
\begin{align*}
\Chen: \overline{\B \Omega(M)} =\bigoplus_{k\ge 1} \left(\s \Omega(M) \right)^{\otimes k} \to \Omega(\Path M).
\end{align*}
It is a linear map of degree $0$ and constructed as follows: We denote the evaluation map
\begin{eqnarray*}
\Path M \times  \Delta_k  &\rightarrow& M^k,\\
(\gamma,(t_1,\dots,t_k))&\mapsto &(\gamma(t_1),\dots, \gamma(t_k))
\end{eqnarray*}
by $\mathrm{ev}$ and the natural projections $\Path M \times {\Delta}_k \rightarrow \Delta_k$ and  $M^{ k}\rightarrow M$ by $\pi$ and $p_i$, respectively.
Chen's map is
\begin{align*}
\Chen(\s a_1\otimes \cdots \otimes \s a_k):= (-1)^{\sum_{i=1}^n[a_i](k-i)}\pi_{*}(\mathrm{ev})^{*}(p_1^*a_1\wedge \cdots \wedge p_n^*a_k),
\end{align*}
where $[a_i]$ is the degree of $\s a_i\in \s \Omega(M)$.

The next step in the construction of the $\Ainfty$ de Rham theorem is a special sequence of maps from the cubes to the simplices.
We follow a construction due to Igusa \cite{I} and make use of the following definition of the $k$-simplex
\begin{align*}
\Delta_k:=\{(t_1,\dots,t_k)\in \mathbb{R}^{k}: 1 \ge t_1 \ge t_2 \ge \cdots \ge t_k \ge 0\} \subset \mathbb{R}^{k}.
\end{align*}

\begin{definition}[Igusa]
For each $k\geq 1$, the map
\[\Theta_{(k)}\colon  I^{k-1} \rightarrow \Path \Delta_k, \]
 is defined to be the composition
 \begin{align*}
\xymatrix{
I^{k-1} \ar[r]^{\lambda_{(k)}}& \Path I^k\ar[r]^{\Path \pi_k}& \Path \Delta_k.
}
\end{align*}

Here $\pi_k\colon  I^k \rightarrow \Delta_k$ is given by $\pi_k(x_1,\dots,x_k):=(t_1,\dots,t_k)$, with components
\[t_i:= \mathrm{ max }\{x_i,\dots, x_k\}.\]
The map $\lambda_{(k)}\colon I^{k-1}\rightarrow \Path I^k$
is defined by sending a point $(x_1,\dots,x_{k-1})$ to the path which goes backwards through the following $k+1$ points:
\[0 \leftarrow  x_1 e_1\leftarrow  \dots \leftarrow  ( x_1e_1+\dots + x_{k-1}e_{k-1})\leftarrow  (x_1e_1+\dots + x_{k-1}e_{k-1}+e_k), \]
where $(e_1,\cdots, e_k)$ denotes the standard basis of $\mathbb{R}^{k}$.
In other words, for $j=0,\dots, k$ we set
\[\lambda_{(k)}(x_1,\dots, x_{k-1})(\frac{k-j}{k})= x_1 e_1+ \dots +x_{j} e_{j},   \]
where $x_k=1$, and interpolate linearly.

By convention, $\Theta_{(0)}$ is the map from a point to a point.

We denote the map adjoint to $\Theta_{(k)}$ by $\Theta_k\colon  I^k \rightarrow \Delta_k$.
\end{definition}

\begin{definition}
The map $\mathsf{S}: \Omega(\Path M) \to \s C^{\bullet}(M)$ is the composition of
\begin{eqnarray*}
\Omega(\Path M) &\to& C^{\bullet}(M),\\
\alpha &\mapsto& \left( \sigma \mapsto \int_{I ^{k-1}}(\Theta_{(k)})^{*}\Path \sigma^{*}\alpha\right).
\end{eqnarray*}
\end{definition}

\begin{definition}
Given a smooth manifold $M$ and an integer $n\geq 1$, we define the map
$ \psi_n: \left(\s \Omega(M)\right)^{\otimes n} \to \s C^{\bullet}(M)$,
as follows:
\begin{enumerate}
\item For $n=1$, we set:
$\left(\psi_1(\s a)\right) (\sigma:\Delta_k \to M):= (-1)^{k} \left(\int_{\Delta^{k}}\sigma^{*}a \right)$.
\item For $n>1$, we set
$\psi_n(\s a_1 \otimes \cdots \otimes \s a_n) := (\mathsf{S}\circ \Chen)(\s a_1\otimes \cdots \otimes \s a_n)$.

\end{enumerate}
\end{definition}

\begin{remark}
Observe that $\psi_1(\s a)$ coincides with $(\mathsf{S}\circ \Chen)(\s a)$, except for the case
when $a$ is of degree $0$, i.e.,\ a function. In that case, $(\mathsf{S} \circ \Chen)(\s a) = 0$, while
\begin{align*}
\left(\psi_1(\s a)\right)(\sigma: \{*\} \to M) := a(\sigma(0)).
\end{align*}
\end{remark}

\begin{theorem}[Gugenheim]\label{theorem:A_infty_quasi-isomorphism}
The sequence of maps
$\psi_n\colon \left(\s \Omega(M)\right)^{\otimes n} \to \s
C^\bullet(M)$\linebreak
defines an $A_\infty$-morphism from $(\Omega(M),-d,\wedge)$ to differential graded algebra of smooth singular cochains $(C^{\bullet}(M),\delta,\cup)$.
Moreover, this morphism is a quasi-isomorphism and the construction is natural with respect to pull backs along smooth maps.
\end{theorem}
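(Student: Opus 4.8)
The plan is to verify the $A_\infty$-morphism equations for the sequence $\{\psi_n\}$ by reducing them, via the explicit formulas, to properties of Chen's iterated integral map $\Chen$, the cubes-to-simplices maps $\Theta_{(k)}$, and Stokes' theorem. Recall that an $A_\infty$-morphism $f$ from $(\Omega(M),-d,\wedge)$ to $(C^\bullet(M),\delta,\cup)$ is, equivalently, a morphism of differential graded coalgebras $F\colon \B\Omega(M)\to \B C^\bullet(M)$ commuting with the bar differentials. Since $C^\bullet(M)$ carries an honest (associative) product, $F$ is determined by its corcorestriction to cogenerators, namely the components $\psi_n$, and the single condition to check is that $F$ intertwines the two bar differentials. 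The key structural input is that $\psi_{\ge 2} = \mathsf{S}\circ \Chen$, so that most of the argument is about $\mathsf{S}$ and $\Chen$ separately: first, $\Chen\colon \overline{\B\Omega(M)}\to \Omega(\Path M)$ is a chain map from the bar differential (built from $d$ and $\wedge$ on $\Omega(M)$) to the de Rham differential on $\Path M$ — this is Chen's fundamental lemma on iterated integrals, whose proof is an application of Stokes' theorem on the fibers of $\Path M\times \Delta_k\to \Path M$, accounting for the boundary faces $t_i=t_{i+1}$ (which produce the wedge terms) and $t_1=1$, $t_k=0$ (which produce the endpoint-evaluation terms). Second, $\mathsf{S}\colon \Omega(\Path M)\to \s C^\bullet(M)$ must be shown to be a chain map up to the correction coming from $\psi_1$; this rests on Stokes' theorem for the integration $\int_{I^{k-1}}(\Theta_{(k)})^*$ over the cube, where the boundary $\partial I^{k-1}$ is matched, under Igusa's maps $\Theta_{(k)}$, with the simplicial boundary operator on $C^\bullet(M)$. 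The combinatorics of Igusa's construction — in particular the identification of the codimension-one faces of $I^{k-1}$ with lower cubes $I^{k-2}$ composed appropriately with face maps $\partial_i\colon \Delta_{k-1}\to \Delta_k$ — is exactly what makes these two Stokes computations fit together.

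Concretely, I would proceed as follows. Step 1: record the explicit form of the bar differential on $\overline{\B\Omega(M)}$ and set up the evaluation/projection diagram for $\Path M\times\Delta_k$; prove $\Chen$ is a chain map by Stokes, being careful with the Koszul signs $(-1)^{\sum [a_i](k-i)}$ appearing in the definition of $\Chen$ so that the internal-face contributions assemble into $\Chen$ applied to the "multiply two adjacent entries" part of the bar differential, and the two extreme faces drop out after one fixes $\gamma(t_1)=\gamma(1)$, $\gamma(t_k)=\gamma(0)$ appropriately (for the based/unbased path space bookkeeping one uses that these terms land in the cogenerators indexed by the endpoints). Step 2: analyze $\mathsf{S}$. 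Here the composite $\Omega(\Path M)\to C^\bullet(M)$, $\alpha\mapsto(\sigma\mapsto\int_{I^{k-1}}\Theta_{(k)}^*\,\Path\sigma^*\alpha)$, is differentiated using Stokes on $I^{k-1}$: the $d$ on $\Path M$ passes to $d$ on $I^{k-1}$, and $\int_{\partial I^{k-1}}$ is rewritten, via the combinatorial description of $\Theta_{(k)}$ and the compatibility of the $\Theta$'s with faces, as $\pm\int_{I^{k-2}}\Theta_{(k-1)}^*\,\Path(\partial_i\sigma)^*\alpha$, i.e.\ as the simplicial coboundary $\delta$ on $C^\bullet(M)$. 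Step 3: combine Steps 1–2 to get that $\mathsf{S}\circ\Chen$ satisfies the $A_\infty$-relations in arities $\ge 2$; then handle arity $1$ by hand, comparing $\psi_1$ with $\mathsf{S}\circ\Chen$ on a one-letter word — they agree except on functions, and the discrepancy (where $\psi_1(\s a)(\mathrm{pt})=a(\sigma(0))$ while $(\mathsf{S}\circ\Chen)(\s a)=0$) is precisely what is needed for the arity-$2$ relation to close up on a word whose first or last letter is a function, i.e.\ this modification is forced and makes $\{\psi_n\}$ a genuine $A_\infty$-morphism on all of $\B\Omega(M)$. Step 4: quasi-isomorphism — since $\psi_1$ (up to the sign $(-1)^k$ and the degree-$0$ modification) is the classical integration map $\Omega(M)\to C^\bullet(M)$, the de Rham theorem gives that $\psi_1$ is a quasi-isomorphism of complexes, and an $A_\infty$-morphism whose linear term is a quasi-isomorphism is an $A_\infty$-quasi-isomorphism. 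Step 5: naturality — all ingredients ($\Chen$, $\Theta_{(k)}$, integration) are defined by pullback, so for a smooth map $\varphi\colon N\to M$ the squares relating $\psi_n^M$ and $\psi_n^N$ commute on the nose.

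The main obstacle is Step 2 together with the sign bookkeeping that glues it to Step 1: one must show that the boundary of the cube $I^{k-1}$, pushed through Igusa's map $\Theta_{(k)}\colon I^{k-1}\to\Path\Delta_k$ (which is itself a composite $\Path\pi_k\circ\lambda_{(k)}$ through a piecewise-linear zig-zag in $I^k$), reproduces exactly the alternating sum of simplicial face maps with the correct signs, and that the "degenerate" faces of the cube — those on which $\Theta_{(k)}$ collapses or factors through a lower-dimensional simplex — contribute either zero or exactly the wedge-product terms needed to match the output of $\Chen$'s own boundary computation. Managing the Koszul signs in $\Chen$, the $(-1)^k$ in $\psi_1$, and the signs $(-1)^{n_i}$ in the bar differential of $C^\bullet(M)$ simultaneously is where the real work lies; everything else is either a direct appeal to Stokes' theorem or to the classical de Rham theorem. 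I would lean on the detailed conventions of \cite{AS} and \cite{I} to keep the signs under control rather than rederiving Igusa's combinatorics from scratch.
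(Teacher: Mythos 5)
The paper itself offers no proof of this statement: it is quoted as Gugenheim's theorem from \cite{G}, with the ingredients (Chen's iterated integrals \cite{Chen}, Igusa's cube-to-simplex maps \cite{I}) recalled only to fix conventions, and the reader is referred to \cite{AS} for details — so there is no in-paper argument to compare yours against. Your outline reconstructs precisely the argument of those references: Chen's boundary formula for iterated integrals via Stokes along the fibers of $\Path M\times\Delta_k\to\Path M$; Stokes on the cubes $I^{k-1}$ together with Igusa's identification of the faces $x_i=0$ with the simplicial face maps and of the faces $x_i=1$ with splitting/concatenation of the path (which is where the cup-product terms $\psi_p\cup\psi_q$ actually come from); the forced degree-$0$ modification of $\psi_1$; the quasi-isomorphism claim from the classical de Rham theorem, since the linear term is the integration map and an $\Ainfty$-morphism with quasi-isomorphic linear part is an $\Ainfty$-quasi-isomorphism; and naturality because every ingredient is defined by pullback. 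One imprecision worth fixing: in your Step 1 the extreme faces $t_1=1$, $t_k=0$ do \emph{not} drop out on the free path space; they produce the endpoint-evaluation terms $\mathrm{ev}^*a_1\wedge\Chen(\s a_2\otimes\cdots)$ and $\Chen(\cdots)\wedge\mathrm{ev}^*a_k$. After composition with $\Path\sigma\circ\Theta_{(k)}$ the endpoint evaluations become constant maps to the first/last vertex of $\sigma$, so these terms survive exactly when $a_1$ or $a_k$ has degree $0$, and they are what the degree-$0$ correction to $\psi_1$ is designed to absorb — you acknowledge this mechanism in Step 3, but it is a matching of nonzero terms there, not a cancellation in Step 1. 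With that bookkeeping placed correctly, your plan is the standard proof from the cited sources.
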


\subsection{Holonomies}

Using the constructions given above, it is now a simple task to define holonomies  for connections with values in $L_\infty$-algebras.


\begin{lemma}\label{lemma:coefficients}
Let $\g$ be an $L_\infty$-algebra and $A$ a commutative differential graded algebra.
Then there is
a natural map of differential graded algebras
\[\tau\colon  \Uinfty(\g \otimes A)\rightarrow \Uinfty(\g)\otimes A.\]
This map is given on generators of the free algebra   $\Uinfty(\g \otimes A)$ by the formula
\begin{eqnarray*}
&&\us \Big(\s (x_1 \otimes a_1)\otimes \dots \otimes \s(x_k \otimes a_k)\Big)\mapsto \\
&& (-1)^{ \sum_{i<j}|a_i|(|x_j|+1)} \us \Big((\s x_1 \otimes \dots \otimes  \s x_k) \otimes (a_1 \dots a_k) \Big).
 \end{eqnarray*}

Moreover, if $\g$ is filtered then $\tau$ is a filtered map.
\end{lemma}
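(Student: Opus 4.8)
The statement to prove is Lemma~\ref{lemma:coefficients}: for an $L_\infty$-algebra $\g$ and a commutative dg algebra $A$, there is a natural dg algebra map $\tau\colon \Uinfty(\g\otimes A)\to \Uinfty(\g)\otimes A$, given by the stated formula on generators, and it is filtered when $\g$ is filtered.

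The plan is to build $\tau$ by adjunction rather than by checking the formula directly. Recall from Proposition~\ref{prop:U_left_to_Sigma} that $\Uinfty\colon \Linfty\to\dga$ is left adjoint to $\Sigma_\infty=\iota\circ\Sigma\colon\dga\to\Linfty$. Hence a dg algebra morphism $\Uinfty(\g\otimes A)\to \Uinfty(\g)\otimes A$ is the same datum as an $L_\infty$-morphism $\g\otimes A\to \Sigma_\infty\big(\Uinfty(\g)\otimes A\big)$. Now $\Uinfty(\g)\otimes A$ is the tensor product of the dg algebra $\Uinfty(\g)$ with the commutative dg algebra $A$, so $\Sigma_\infty(\Uinfty(\g)\otimes A)$ is the $L_\infty$-algebra (in fact dg Lie algebra) $\Sigma(\Uinfty(\g))\otimes A$, using the functoriality of $-\otimes A$ on $L_\infty$-algebras recalled in the Remark preceding Lemma~\ref{lemma:denseness} (there one forms $\g\otimes A$ for $\g$ an $L_\infty$-algebra, and the bracket formulas there specialize to the dg Lie case). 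So it suffices to produce an $L_\infty$-morphism
\[
\g\otimes A \;\longrightarrow\; \Sigma(\Uinfty(\g))\otimes A.
\]
For this I would take the canonical $L_\infty$-morphism $j\colon \g\to \Sigma_\infty(\Uinfty(\g))=\Sigma(\Uinfty(\g))$, namely the unit of the $(\Uinfty,\Sigma_\infty)$-adjunction at $\g$ (equivalently, the composite $\g\xrightarrow{\eta}\ST(\g)\hookrightarrow\Sigma(\U(\ST(\g)))=\Sigma(\Uinfty(\g))$), and then apply the functor $-\otimes A$ to it, obtaining $j\otimes\id_A\colon \g\otimes A\to\Sigma(\Uinfty(\g))\otimes A$. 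Transporting $j\otimes\id_A$ back across the adjunction defines $\tau$. Naturality in $\g$ (and in $A$) is then automatic, since every ingredient — the adjunction, the unit $j$, and the functor $-\otimes A$ — is natural.

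The remaining task is to check that the $\tau$ so defined is given on the free generators $\us\big(\s(x_1\otimes a_1)\otimes\cdots\otimes\s(x_k\otimes a_k)\big)$ of $\Uinfty(\g\otimes A)=\U(\ST(\g\otimes A))$ by the asserted formula. This is a matter of unwinding: the counit/evaluation leg of the adjunction sends such a generator, via the structure maps of the $L_\infty$-morphism $j\otimes\id_A$, to a sum of iterated products in $\Uinfty(\g)\otimes A$; one records the Koszul sign $(-1)^{\sum_{i<j}|a_i|(|x_j|+1)}$ produced by the definition of $\gamma\otimes\id_A$ in the Remark before Lemma~\ref{lemma:denseness}, and the leading term is exactly $\us\big((\s x_1\otimes\cdots\otimes\s x_k)\otimes(a_1\cdots a_k)\big)$. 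Since both sides are algebra maps out of a free algebra, agreement on generators suffices, so I would verify the formula only on generators and invoke freeness. (Strictly, one should note the stated formula is the image of an algebra generator, and the full map is its multiplicative extension.) I expect this bookkeeping of signs and of the free-generator description to be the main, though essentially routine, obstacle; no conceptual difficulty arises because all structure is transported functorially.

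Finally, for the filtered statement: if $\g$ is filtered then $\g\otimes A$ is filtered with $F_k(\g\otimes A)=F_k(\g)\otimes A$ (Remark before Lemma~\ref{lemma:denseness}), $\Uinfty$ of a filtered $L_\infty$-algebra is filtered (the Corollary following Lemma~\ref{lemma:filtration_enveloping}), and $\Uinfty(\g)\otimes A$ carries the filtration $F_k(\Uinfty(\g))\otimes A$. From the explicit formula, a generator built from $x_i\in F_{l_i}(\g)$ maps into $F_{l_1+\cdots+l_k}(\Uinfty(\g))\otimes A$, because $\s x_1\otimes\cdots\otimes\s x_k\in F_{l_1+\cdots+l_k}(\overline\Sym(\s\g))$ and hence its desuspension lies in the corresponding filtration level of $\ST(\g)\subset\U(\ST(\g))=\Uinfty(\g)$ (Lemmas~\ref{operations on filtrations} and~\ref{lemma:filtration_strictification}). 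Since $\tau$ is an algebra map and filtrations are multiplicative, $\tau$ is a filtered morphism; this follows by the same check on generators used for the formula.
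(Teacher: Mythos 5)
Your construction coincides with the paper's proof: the paper likewise defines $\tau$ as the image of $(\gamma\circ\eta)\otimes\id_A$ (your unit $j\otimes\id_A$) under the chain of natural isomorphisms $\Hom_{\Linfty}(\g\otimes A,\Sigma(\Uinfty(\g))\otimes A)\cong\Hom_{\dga}(\Uinfty(\g\otimes A),\Uinfty(\g)\otimes A)$, using $\Sigma(C\otimes A)\cong\Sigma(C)\otimes A$, and it does not spell out the generator formula or the filtered claim in any more detail than you do. One small correction to your sketch: upon unwinding, a free generator $\us(\s(x_1\otimes a_1)\otimes\cdots\otimes\s(x_k\otimes a_k))$ is sent to exactly the single displayed term with the Koszul sign --- the Quillen adjunction evaluates a generator of $\LF(\CE(\g\otimes A))$ only on the $\Sym^1$-component of the coalgebra map, so there is no ``sum of iterated products with that as leading term.''
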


\begin{proof}
First recall that there is a natural morphism of $L_{\infty}$-algebras
$\eta$ from $\g$ to its strictification $\ST(\g)$. The adjunction property of $\Uinfty$ yields a morphism
$$\gamma \in \Hom_{\dgla}(\ST(\g),\Sigma(\Uinfty(\g))) \cong \Hom_{\dga}(\Uinfty(\g),\Uinfty(\g)) $$
corresponding to the identity of $\Uinfty(\g)$.

The composition of $\eta$ and $\gamma$ is an $L_\infty$-morphism from $\g$ to $\Sigma(\Uinfty(\g))$. Tensoring with $\id_A$
yields an $L_\infty$-morphism
$$ (\gamma\circ \eta)\otimes \id_A: \g\otimes A \to \Sigma(\Uinfty(\g))\otimes A.$$
Using the adjunction properties, as well as the natural isomorphism $\Sigma(C\otimes A) \cong \Sigma(C)\otimes A$ for $C$ any differential graded algebra, one obtains natural isomorphisms
\begin{eqnarray*}
 \Hom_{\Linfty}(\g\otimes A, \Sigma(\Uinfty(\g))\otimes A)  &\cong &  \Hom_{\Linfty}(\g\otimes A, \Sigma(\Uinfty(\g)\otimes A)) \\
&=& \Hom_{\dgcc_a}(\CE(\g\otimes A),\CE(\Sigma(\Uinfty(\g)\otimes A))) \\
&=& \Hom_{\dgla}(\ST(\g\otimes A),\Sigma(\Uinfty(\g)\otimes A))\\
&=& \Hom_{\dga}(\Uinfty(\g\otimes A),\Uinfty(\g)\otimes A).
\end{eqnarray*}
We define $\tau$ to be the image of $(\gamma\circ \eta)\otimes \id_A$ under this sequence of natural isomorphisms.

\end{proof}

\begin{definition}
Let $M$ be a smooth manifold and $\g$ an $L_\infty$-algebra.
A connection on $M$ with values in $\g$ is a
degree~$1$ element $\alpha$ in $\g \hat{\otimes} \Omega(M)$.
\end{definition}

\begin{definition}
A connection $\alpha$ on $M$ with values in a filtered $L_\infty$-algebra $\g$ is called
flat if $\alpha \in \mathsf{MC}\big(\g \hat{\otimes} \Omega(M)\big)$.
\end{definition}

\begin{definition}
Suppose that $\alpha$ is a connection on $M$ with values in a filtered $L_{\infty}$-algebra $\mathfrak{g}$.
The holonomy $\holinfty_{\alpha} \in \U_\infty(\g)\hat{\otimes} C^\bullet(M)$ of $\alpha$ is the image of $\alpha$ under the composition
$$
\xymatrix{
\g\hat{\otimes} \Omega(M) \ar[r]^(0.45){(\widehat{\eta\otimes \id})_*} & \ST(\g)\hat{\otimes} \Omega(M) \ar[r]^(0.4){\hat{\iota}} & \hat{\U}(\ST(\g)\otimes \Omega(M)) \ar[r]^(0.7){\hat{\tau}}& \cdots}$$

\vspace{-0.7cm}

$$
\xymatrix{
\cdots \ar[r] & \U(\ST(\g))\hat{\otimes} \Omega(M)
\ar[r]^(0.48){(\widehat{\id\otimes \psi})_*} & \U(\ST(\g))\hat{\otimes} C^{\bullet}(M).
}
$$
By definition, the last space equals $\Uinfty(\g)\hat{\otimes} C^{\bullet}(M)$.
The maps above are as follows:
\begin{itemize}
 \item $\eta$ is the map from $\g$ to its strictification $\ST(\g)$.
 \item $\iota$ is the inclusion of $\g$ into its universal enveloping algebra.
 \item $\tau$ is the map defined in Lemma~\ref{lemma:coefficients}.
 \item $\psi$ is Gugenheim's $\mathsf{A}_\infty$ quasi-isomorphism between $\Omega(M)$ and $C^{\bullet}(M)$.
\end{itemize}
\end{definition}

\begin{proposition}
Suppose that $\alpha$ is a flat connection on $M$ with values in a filtered $L_\infty$-algebra $\mathfrak{g}$.
Then $\holinfty_{\alpha}$ is a Maurer--Cartan element of $\Uinfty(\g)\hat{\otimes} C^{\bullet}(M)$.

\end{proposition}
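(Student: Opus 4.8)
The statement asserts that $\holinfty_\alpha$ is a Maurer--Cartan element of the differential graded algebra $\Uinfty(\g)\hat\otimes C^\bullet(M)$. The natural strategy is to track the image of $\alpha$ through each of the four maps in the defining composition and check that each of them carries Maurer--Cartan elements to Maurer--Cartan elements. Recall that $\alpha$ is by hypothesis a Maurer--Cartan element of the complete $L_\infty$-algebra $\g\hat\otimes\Omega(M)$, and the goal is a Maurer--Cartan element in the sense of Remark~\ref{remark:MC_dga}, i.e.\ of a differential graded algebra. So the composition passes from the $L_\infty$-world to the associative world, and the plan is to locate precisely where that transition happens and to invoke the appropriate preservation statement at each stage.

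First I would note that the map $(\widehat{\eta\otimes\id})_*$ is the pushforward along the filtered $L_\infty$-morphism $\eta\otimes\id_{\Omega(M)}\colon \g\otimes\Omega(M)\to\ST(\g)\otimes\Omega(M)$ (completed), and by the lemma preceding Remark~\ref{remark:MC_dga} such a pushforward preserves Maurer--Cartan elements; hence $(\widehat{\eta\otimes\id})_*(\alpha)$ is Maurer--Cartan in $\ST(\g)\hat\otimes\Omega(M)$. The key transition is the next map $\hat\iota$: for a differential graded Lie algebra $\mathfrak{h}$, the inclusion $\iota\colon\mathfrak{h}\to\Sigma(\U(\mathfrak{h}))$ is a morphism of differential graded Lie algebras, and a degree~$1$ element $\beta$ satisfies the $L_\infty$ (here $\dgla$) Maurer--Cartan equation $d\beta+\tfrac12[\beta,\beta]=0$ if and only if $\iota(\beta)$ satisfies the associative Maurer--Cartan equation $d\iota(\beta)+\iota(\beta)\cdot\iota(\beta)=0$, because $[\beta,\beta]=2\beta\cdot\beta$ for odd $\beta$ in the commutator bracket. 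So here $\beta=(\widehat{\eta\otimes\id})_*(\alpha)$ with $\mathfrak{h}=\ST(\g)\otimes\Omega(M)$, and $\hat\iota(\beta)$ is a Maurer--Cartan element of the differential graded algebra $\hat\U(\ST(\g)\otimes\Omega(M))$. From this point on we remain in the associative setting, and the remaining two maps $\hat\tau$ and $(\widehat{\id\otimes\psi})_*$ need only be checked to preserve associative Maurer--Cartan elements. The map $\hat\tau$ is the completion of the morphism of differential graded algebras $\tau$ from Lemma~\ref{lemma:coefficients}, and morphisms of differential graded algebras manifestly preserve Maurer--Cartan elements (they commute with the differential and the product); completion preserves this because $\tau$ is a filtered map and Maurer--Cartan elements and the defining equation are continuous.

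The last and most substantial step is the map $(\widehat{\id\otimes\psi})_*$, where $\psi$ is Gugenheim's $\Ainfty$-quasi-isomorphism from $(\Omega(M),-d,\wedge)$ to $(C^\bullet(M),\delta,\cup)$. Tensoring an $\Ainfty$-morphism of differential graded algebras with the identity on the fixed differential graded algebra $A=\U(\ST(\g))$ (appropriately completed) yields an $\Ainfty$-morphism $A\otimes\Omega(M)\to A\otimes C^\bullet(M)$, and the general fact is that an $\Ainfty$-morphism $F=(F_n)$ sends a Maurer--Cartan element $\omega$ of the source to the element $\sum_{n\ge1}F_n(\omega^{\otimes n})$, which is Maurer--Cartan in the target --- this is exactly the $\Ainfty$-analogue of the pushforward lemma used in the first step, and it is the place where the iterated-integral content of $\psi$ enters. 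The convergence of the infinite sum $\sum_n (\id\otimes\psi)_n(\beta^{\otimes n})$ is guaranteed because $\beta=\hat\tau\hat\iota(\widehat{\eta\otimes\id})_*(\alpha)$ lies in filtration degree $\ge1$ (the completed tensor with $C^\bullet(M)$ does not affect the filtration, which is carried entirely by the $\Uinfty(\g)$-factor and ultimately by the filtration of $\g$), so only finitely many terms contribute modulo each $F_k$. I expect the main obstacle --- and the point that requires the most care --- to be the precise formulation of ``tensoring an $\Ainfty$-morphism with $\id_A$ gives an $\Ainfty$-morphism which preserves Maurer--Cartan elements'' in the \emph{completed, filtered} setting, together with verifying that the signs and the bigrading of $A\hat\otimes C^\bullet(M)$ are compatible with the signs built into Gugenheim's $\psi$ (the shift conventions $\s$ and the sign in $\psi_1$). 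Everything else is a matter of chaining together preservation statements that are either proved in the excerpt or are standard; the heart of the argument is this final $\Ainfty$-pushforward.
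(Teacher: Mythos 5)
Your proposal is correct and follows essentially the same route as the paper, whose entire proof is the one-line observation that all the maps in the defining composition preserve Maurer--Cartan elements. You simply spell out that check map by map (the $L_\infty$-pushforward, the passage to the associative setting via $\iota$, the dga morphism $\tau$, and the $\Ainfty$-pushforward along $\id\otimes\psi$ with convergence from the filtration), which is exactly what the paper leaves implicit.
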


\begin{proof}
All of the maps involved in the definition of $\holinfty_\alpha$ preserve Maurer--Cartan elements.
\end{proof}

Recall that there is a natural inclusion $\Uinfty(\g)\otimes C^{\bullet}(M) \hookrightarrow \Hom(C_{\bullet}(M),\Uinfty(\g))$
of filtered diffential graded algebras. Completing yields a map
$$ \Uinfty(\g)\hat{\otimes} C^{\bullet}(M) \to \Hom(C_{\bullet}(M),\hatUinfty(\g)),$$
which allows us to view $\holinfty_\alpha$ as a map from $C_\bullet(M)$ to $\hat{\U}_\infty(\g)$.
It is not hard to see that the image of this map lies in the kernel $K$ of
the augmentation map $\hatUinfty(\g)\to \mathbb{R}$. Hence if $\alpha$ is flat, this map corresponds to a twisting cochain on $C_{\bullet}(M)$ with values in $K$; see Appendix~\ref{section:twisting_cochains}.
Such a twisting cochain is equivalent to a morphism of differential graded coalgebras from $C_{\bullet}(M)$ to $\hat{\B}\hatUinfty(\g)$,
where $\hat{\B}\hatUinfty(\g)$ denotes the completed bar complex of $\hatUinfty(\g)$.
Hence a flat connection $\alpha$ on $M$ with values in a filtered $L_\infty$-algebra $\g$ gives rise
to a morphism of differential graded coalgebras $C_{\bullet}(M) \to \hat{\B}\hatUinfty(\g)$.
We have proved our main result:

\begin{theorem}\label{main theorem}

Suppose that $\alpha$ is a flat connection on $M$ with values in a filtered $L_\infty$-algebra  $\mathfrak{g}$. Then there is a natural homomorphism of differential graded coalgebras
\[\hol^{\infty}_\alpha\colon  C_\bullet(M) \rightarrow \hat{\B}\hat {\mathbb{U}}_\infty(\g).\]

\end{theorem}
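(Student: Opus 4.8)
The plan is to realize $\holinfty_\alpha$ as the differential graded coalgebra morphism corresponding, under the classical dictionary between twisting cochains and coalgebra maps into a (completed) bar construction, to a Maurer--Cartan element of $\Uinfty(\g)\hat{\otimes}C^\bullet(M)$ built out of the flat connection $\alpha$; essentially all of the work has already been done in the preceding sections, and the proof consists in threading the results together. \textbf{Step 1 (a Maurer--Cartan element from $\alpha$).} Starting from $\alpha\in\mathsf{MC}\big(\g\hat{\otimes}\Omega(M)\big)$, I would form $\holinfty_\alpha\in\Uinfty(\g)\hat{\otimes}C^\bullet(M)$ as in the Definition preceding the statement, namely as the image of $\alpha$ under the composition
\begin{align*}
\g\hat{\otimes}\Omega(M)\ \xrightarrow{(\widehat{\eta\otimes\id})_*}\ \ST(\g)\hat{\otimes}\Omega(M)\ &\xrightarrow{\hat{\iota}}\ \hat{\U}(\ST(\g)\otimes\Omega(M))\\
&\xrightarrow{\hat{\tau}}\ \Uinfty(\g)\hat{\otimes}\Omega(M)\ \xrightarrow{(\widehat{\id\otimes\psi})_*}\ \Uinfty(\g)\hat{\otimes}C^\bullet(M),
\end{align*}
where $\eta$ is the filtered $L_\infty$-morphism of Lemma~\ref{lemma:filtration_strictification}, $\iota$ the filtered inclusion of the dg Lie algebra $\ST(\g)\otimes\Omega(M)$ into its enveloping algebra, $\tau$ the natural dg-algebra map of Lemma~\ref{lemma:coefficients}, and $\psi$ Gugenheim's $\Ainfty$-quasi-isomorphism of Theorem~\ref{theorem:A_infty_quasi-isomorphism}. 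Each of these maps is filtered and preserves Maurer--Cartan elements --- for the $L_\infty$-morphisms by the Lemma on pushforwards $\phi_*$ of complete $L_\infty$-algebras, and for the dg-algebra maps by Remark~\ref{remark:MC_dga} --- so $\holinfty_\alpha$ is again Maurer--Cartan; this is the content of the Proposition stated just before the theorem.

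\textbf{Step 2 (from a Maurer--Cartan element to a twisting cochain).} Next I would use the natural inclusion of filtered differential graded algebras $\Uinfty(\g)\otimes C^\bullet(M)\hookrightarrow\Hom(C_\bullet(M),\Uinfty(\g))$ and pass to completions, so as to view $\holinfty_\alpha$ as a degree-one linear map $C_\bullet(M)\to\hatUinfty(\g)$. Since $\alpha$ originates from $\g$, which lands in the augmentation ideal after strictification and inclusion into the enveloping algebra, and since $\tau$ and $\id\otimes\psi$ respect this, the image of the map lies in the kernel $K$ of the augmentation $\hatUinfty(\g)\to\mathbb{R}$, and it annihilates the co-augmentation of $C_\bullet(M)$. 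The Maurer--Cartan equation for $\holinfty_\alpha$, rewritten in terms of the convolution product on $\Hom(C_\bullet(M),\hatUinfty(\g))$ and the simplicial differential on $C_\bullet(M)$, is exactly the identity defining a twisting cochain with values in $K$.

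\textbf{Step 3 (from a twisting cochain to a coalgebra morphism).} Finally I would invoke the standard equivalence --- made precise, in the completed setting forced on us by the non-connectedness of $C_\bullet(M)$, in Appendix~\ref{section:twisting_cochains} --- between twisting cochains $C_\bullet(M)\to K$ and morphisms of differential graded coalgebras $C_\bullet(M)\to\hat{\B}\hatUinfty(\g)$ into the completed bar complex of $\hatUinfty(\g)$, and define $\holinfty_\alpha$ to be the coalgebra morphism associated to the twisting cochain of Step 2. Naturality in $M$ is inherited from the naturality of $\eta$, $\iota$, $\tau$ and of Gugenheim's construction under pullback along smooth maps.

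\textbf{Main obstacle.} No single step is deep; the real work lies in the completions. Because $C_\bullet(M)$ is not connected, the ordinary bar complex $\B\hatUinfty(\g)$ fails to be a differential graded coalgebra --- its coproduct lands in a completed tensor product --- so the target must be $\hat{\B}\hatUinfty(\g)$, and one must check that both the twisting-cochain/coalgebra-morphism correspondence and the four-fold composition of Step 1 survive completion; this is the purpose of Appendix~\ref{section:twisting_cochains}, and it is where the argument requires genuine care. A secondary technical issue is giving a clean meaning to the pushforward along $\id\otimes\psi$: since $\psi$ is only an $\Ainfty$-morphism, one must tensor it on one side with the noncommutative dg algebra $\Uinfty(\g)$ and verify that the result is still an $\Ainfty$-morphism that is filtered --- the latter is easy, as the filtration $F_k(\g)\otimes A$ depends only on the $\Uinfty(\g)$-factor --- and hence preserves Maurer--Cartan elements; here the bar-coalgebra reformulation of Gugenheim's map and its naturality (Theorem~\ref{theorem:A_infty_quasi-isomorphism}) do the job.
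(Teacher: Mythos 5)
Your proposal is correct and follows essentially the same route as the paper: the image of $\alpha$ under the four-fold composition is a Maurer--Cartan element of $\Uinfty(\g)\hat{\otimes}C^{\bullet}(M)$, which via the inclusion into $\Hom(C_{\bullet}(M),\hatUinfty(\g))$ becomes a twisting cochain with values in the augmentation ideal, and the completed-bar correspondence of Appendix~\ref{section:twisting_cochains} turns it into the desired coalgebra morphism. You also correctly identify the completion issue coming from the non-connectedness of $C_\bullet(M)$ as the point requiring the completed bar complex, exactly as in the paper.
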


For a flat connection $\alpha$ with values in a filtered differential graded Lie algebra $\g$, one could
also define the holonomy $\hol_{\alpha}$ as the image of $\alpha$ under
$$
\xymatrix{
\g\otimes \Omega(M) \ar[r]^{\hat{\iota}} & \hat{\U}(\g\otimes \Omega(M)) \ar[r]^{\hat{\tau}}& \U(\g)\hat{\otimes} \Omega(M)
\ar[r]^(0.48){(\widehat{\id\otimes \psi})_*} & \U(\g)\hat{\otimes} C^{\bullet}(M).
}
$$
Hence, if $\alpha$ is flat, one obtains a morphism of differential graded coalgebras
$$ \hol_{\alpha}: C_{\bullet}(M) \to \hat{\B}\hat{\U}(\g).$$

\begin{proposition}
Let $\alpha$ be a flat connection on $M$ with values in a filtered differential graded Lie algebra $\g$.
Then the following diagram is commutative:
$$
\xymatrix{
 C_\bullet(M)  \ar[r]^{\holinfty_\alpha} \ar[rd]_{\hol_\alpha}& \hat{\B} \hat{\mathbb{U}}_\infty (\g) \ar[d]^{\B \hat{\mathbb{U}}(\rho)}\\
 &\hat{\B} \hat{\mathbb{U}}(\g).
}
$$
\end{proposition}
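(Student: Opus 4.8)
The plan is to show that both sides of the diagram arise by applying natural transformations of functors to the single element $\alpha$, and then to reduce the claimed commutativity to a statement at the level of $L_\infty$-morphisms, where it follows from the definitions in Section~\ref{subsection:universal_enveloping} together with the naturality in Remark~\ref{remark:ST(g)_to_g}. Concretely, first I would recall that $\holinfty_\alpha$ is built from the composition $\g\hat\otimes\Omega(M)\to\ST(\g)\hat\otimes\Omega(M)\to\hat\U(\ST(\g)\otimes\Omega(M))\to\U(\ST(\g))\hat\otimes\Omega(M)\to\U(\ST(\g))\hat\otimes C^\bullet(M)$, while $\hol_\alpha$ is built from $\g\otimes\Omega(M)\to\hat\U(\g\otimes\Omega(M))\to\U(\g)\hat\otimes\Omega(M)\to\U(\g)\hat\otimes C^\bullet(M)$. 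The map $\B\hat\U(\rho)$ on the target is induced by $\U(\rho)\colon\Uinfty(\g)=\U(\ST(\g))\to\U(\g)$, completed and then barred. Since the final step $(\widehat{\id\otimes\psi})_*$ is natural in the differential graded algebra coefficient (Gugenheim's $\psi$ does not see the coefficients at all, it acts only on the $\Omega(M)$/$C^\bullet(M)$ tensor factor), it suffices to prove that the two resulting Maurer--Cartan elements in $\U(\g)\hat\otimes\Omega(M)$ agree; equivalently, that the diagram of maps $\g\hat\otimes\Omega(M)\rightrightarrows\U(\g)\hat\otimes\Omega(M)$ commutes before applying $\psi$.

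Next I would strip off the completions. By Lemma~\ref{functoriality of completion} and Corollary~\ref{corollary:denseness}, all the maps in sight are completions of maps defined on the dense uncompleted subspaces, so it is enough to check commutativity of
\begin{align*}
\xymatrix{
\g\otimes\Omega(M) \ar[r] \ar[d] & \ST(\g)\otimes\Omega(M) \ar[r] & \U(\ST(\g)\otimes\Omega(M)) \ar[r]^{\tau} & \U(\ST(\g))\otimes\Omega(M) \ar[d]^{\U(\rho)\otimes\id} \\
\U(\g\otimes\Omega(M)) \ar[rrr]_{\tau} & & & \U(\g)\otimes\Omega(M)
}
\end{align*}
as maps of sets on degree-one elements (in fact as maps of the relevant structured objects). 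Here the left vertical map is the inclusion $\iota$ into the enveloping algebra; the key point is then that $\U(\rho)\otimes\id_{\Omega(M)}$ intertwines the two instances of the natural transformation $\tau$ from Lemma~\ref{lemma:coefficients}, together with the fact that $\rho\circ\eta=\id$ on $\g$ as noted in Remark~\ref{remark:ST(g)_to_g}.

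The cleanest way to organize this last point is via the adjunction characterization of $\Uinfty$ from Proposition~\ref{prop:U_left_to_Sigma} and the construction of $\tau$ in the proof of Lemma~\ref{lemma:coefficients}: $\tau_\g$ is adjoint to the $L_\infty$-morphism $(\gamma\circ\eta)\otimes\id_A\colon\g\otimes A\to\Sigma(\Uinfty(\g))\otimes A$, where $\gamma\colon\ST(\g)\to\Sigma(\Uinfty(\g))$ is adjoint to $\id_{\Uinfty(\g)}$. Naturality of all adjunctions involved, applied to the morphism $\rho\colon\ST(\g)\to\g$ of differential graded Lie algebras (viewed inside $\Linfty$), gives a commuting square relating $\tau_{\ST(\g)}$, $\tau_{\g}$, $\U(\rho)$, and the map $\ST(\g)\otimes\Omega(M)\to\U(\ST(\g)\otimes\Omega(M))$; composing with the unit $\eta\colon\g\to\ST(\g)$ and using $\rho\circ\eta=\id$ then collapses the $\ST$ side onto the $\g$ side. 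Thus the diagram commutes. I expect the main obstacle to be purely bookkeeping: keeping track of which completions are in play (the target $\hat\B\hatUinfty(\g)$ is a completed bar complex, not a genuine coalgebra, so ``homomorphism of differential graded coalgebras'' must be interpreted in the sense of Appendix~\ref{section:twisting_cochains}), and verifying that $\B\hat\U(\rho)$ is well defined at the level of completed bar complexes and twisting cochains --- i.e.\ that $\U(\rho)$ is a filtered map so that it passes to completions and induces a map on completed bar complexes. Once that is set up, the commutativity itself is a formal consequence of naturality and $\rho\circ\eta=\id$, with no genuine computation required.
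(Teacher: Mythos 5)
Your proposal is correct and is essentially the paper's own argument: the paper likewise reduces the statement to the fact that the two Maurer--Cartan elements are intertwined by $\U(\rho)\hat{\otimes}\id$, and establishes this via exactly the two facts you isolate, namely that $\U(\rho)\circ\hat{\iota}\circ\hat{\eta}_*$ equals the canonical inclusion $\g\to\U(\g)$ (i.e.\ $\rho\circ\eta=\id$) and that $\U(\rho)$ intertwines $\tau$ and the coefficient maps $\id\otimes\phi$ (covering the Gugenheim step). Your packaging via the adjunction description of $\tau$ and the explicit density/completion bookkeeping is just a slightly more detailed rendering of the same proof.
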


\begin{proof}
\hspace*{3pt}This follows from the fact that the Maurer--Cartan elements
$\holinfty_{\alpha} \in\linebreak \Uinfty(\g)\hat{\otimes}C^{\bullet}(M)$
and $\hol_{\alpha} \in \U(\g) \hat{\otimes} C^{\bullet}(M)$ are related by the map
$\U(\rho)\hat{\otimes}\id$.
To establish this, let $\g$ be an arbitrary filtered differential graded Lie algebra and $\phi: A\to B$ morphism of commutative differential graded algebras.
Then the diagrams
\begin{align*}
\xymatrix{
\g^1 \ar[r]^{\hat{\eta}_*} \ar[rrdd]_{\hat{i}}& \ST(\g) \ar[r]^{\hat{i}} & \Uinfty(\g)\ar[dd]^{\U(\rho)}\\
&&\\
&& \U(\g),
}
\end{align*}
and
\begin{align*}
 \xymatrix{
\Uinfty(\g\otimes A) \ar[r]^{\tau} \ar[d]^{\U(\rho)}& \Uinfty(\g)\otimes A \ar[r]^{\id\otimes \phi} \ar[d]^{\U(\rho)\otimes \id}& \Uinfty(\g)\otimes B \ar[d]^{\U(\rho)\otimes \id}\\
\U(\g\otimes A) \ar[r]^{\tau} & \U(\g)\otimes A \ar[r]^{\id\otimes \phi}& \U(\g)\otimes B.
}
\end{align*}
are commutative.
\end{proof}

We saw that in the case of differential graded Lie algebras, the two possible notions of holonomy $\hol$ and $\hol^\infty$ are
related by the quasi-isomorphism $\U(\rho)\colon  \U_\infty(\g)\rightarrow
\U(\g)$. The following lemma shows that, futhermore, both
definitions are consistent with the usual notion of holonomy in the case that $\g$ is a Lie algebra.

\begin{lemma}
Let $\alpha$ be a connection on $M$ with values in a filtered Lie algebra $\g$.
Then $\hol_{\alpha} \in \U(\g)\hat{\otimes} C^{\bullet}(M)$ yields the usual parallel transport
of $\alpha$.
\end{lemma}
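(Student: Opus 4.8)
The plan is to unwind the definition of $\hol_\alpha$ for a filtered Lie algebra $\g$ and check that, when paired against a smooth singular simplex $\sigma\colon \Delta_k \to M$, the resulting element of $\hat{\U}(\g)$ agrees with the classical iterated integral formula for parallel transport. The key observation is that the three maps composing $\hol_\alpha$ simplify drastically in the Lie algebra case: since $\g$ is an ordinary (ungraded) Lie algebra, the inclusion $\hat\iota\colon \g\otimes \Omega(M) \to \hat\U(\g\otimes\Omega(M))$ and the map $\hat\tau$ just realize $\alpha$ as a Maurer--Cartan element of the (completed) tensor algebra $\hat{\T}(\g)\hat\otimes \Omega(M)$ modulo the Lie relations, and the only nontrivial analytic content is carried by $(\widehat{\id\otimes\psi})_*$, i.e.\ by Gugenheim's $\Ainfty$-morphism $\psi$ evaluated on tensor powers of $\alpha$.

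First I would recall that $\psi_n = \mathsf{S}\circ\Chen$ for $n>1$, so that for $\alpha = \sum_a \xi_a\otimes\omega_a \in \g\otimes\Omega^1(M)$ one has
\begin{align*}
(\widehat{\id\otimes\psi})_*(\alpha)(\sigma) = \sum_{n\ge 1}\ \sum_{a_1,\dots,a_n}\pm\, \xi_{a_1}\cdots\xi_{a_n}\ \int_{I^{k-1}}(\Theta_{(k)})^*\Path\sigma^*\,\Chen(\s\omega_{a_1}\otimes\cdots\otimes\s\omega_{a_n}),
\end{align*}
the product $\xi_{a_1}\cdots\xi_{a_n}$ being taken in $\hat\U(\g)$. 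Specializing to $k=1$, so that $\sigma$ is a path and $I^{k-1}$ is a point, Chen's map unwinds to the ordinary iterated integral $\int_{0\le t_1\le\cdots\le t_n\le 1}\sigma^*\omega_{a_1}(t_1)\cdots\sigma^*\omega_{a_n}(t_n)$, and the sum over $n$ reproduces exactly the Dyson/Chen series
\[
\hol_\alpha(\sigma) = 1 + \sum_{n\ge 1}\int_{\Delta^n}\sigma^*\alpha\otimes\cdots\otimes\sigma^*\alpha,
\]
which is the path-ordered exponential $\operatorname{P}\exp\big(-\int_0^1\sigma^*\alpha\big)$ solving the parallel transport equation $\tfrac{d}{dt}U(t) = -\sigma^*\alpha(\partial_t)\,U(t)$, $U(0)=1$, in $\hat\U(\g)$. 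One then checks that the signs introduced in the definitions of $\Chen$ and $\psi_1$ conspire to give precisely this normalization; this is a bookkeeping step that I would state and relegate to the conventions of \cite{AS,I}.

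The remaining point is to argue that the higher components ($k\ge 2$) of $\hol_\alpha$ carry no extra information beyond the homotopy invariance already guaranteed by flatness: since $\hol_\alpha$ is a Maurer--Cartan element of $\U(\g)\hat\otimes C^\bullet(M)$ (by the Proposition preceding Theorem~\ref{main theorem}), the associated coalgebra morphism $C_\bullet(M)\to\hat\B\hat\U(\g)$ is the unique one extending the degree-$0$ and degree-$1$ data, and the degree-$1$ datum is the path holonomy just computed. Concretely, the value on a higher simplex is the iterated integral of $\sigma^*\alpha$ over $\Theta_{(k)}(I^{k-1})\subset\Path\Delta_k$, which is the standard ``Chen normalization'' of higher parallel transport; I would simply identify this with the classical formula and invoke naturality of $\psi$ under pullback (Theorem~\ref{theorem:A_infty_quasi-isomorphism}) to reduce to the universal case $M=\Delta_k$.

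The main obstacle I anticipate is purely one of signs and normalizations: matching the Koszul-sign conventions built into $\Chen$, into $\psi_1$ (with its exceptional $(-1)^k$), and into the bar differential against the classical conventions for $\operatorname{P}\exp$ and for the cup product on $C^\bullet(M)$. There is no conceptual difficulty once one accepts Gugenheim's theorem and the explicit form of Chen's map; the content is entirely in verifying that, after all identifications, the $n$-th term of $(\widehat{\id\otimes\psi})_*(\alpha)$ evaluated on a path is the $n$-fold ordered integral with the correct sign, so that the sum is the honest holonomy. I would therefore present the computation for $k=1$ in detail and remark that the general case follows by the same unwinding together with the definition of higher parallel transport via Chen's maps $\Theta_{(k)}$.
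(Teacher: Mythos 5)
Your computation for paths is essentially the paper's proof: pull $\alpha$ back along $\gamma$, observe that only the $k=1$ component of Gugenheim's morphism survives, and identify the resulting iterated-integral series in $\hat{\U}(\g)$ with the solution of the parallel transport ODE (up to the orientation/sign normalization coming from Igusa's maps, which the paper records as transport with reversed orientation). That part is fine.

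The gap is in your last paragraph, on simplices of dimension $k\ge 2$. First, the argument you give is invalid as stated: the lemma does not assume $\alpha$ is flat, so you may not invoke the Maurer--Cartan property of $\hol_\alpha$; and even granting flatness, a morphism of coalgebras $C_\bullet(M)\to\hat{\B}\hat{\U}(\g)$ (equivalently, a twisting cochain) is \emph{not} determined by its degree-$0$ and degree-$1$ components --- the higher components are independent data, so your uniqueness claim fails in general. Second, and more importantly, no such argument is needed, because in the Lie algebra case there simply are no higher components: since $\g$, hence $\U(\g)$, is concentrated in degree $0$ and $\alpha$ has total degree $1$, one has $\alpha\in\g\hat{\otimes}\Omega^1(M)$ and, all maps in the definition of $\hol_\alpha$ being of degree $0$, $\hol_\alpha\in\U(\g)\hat{\otimes}C^1(M)$. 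Concretely, $\psi_n$ applied to $n$ one-forms produces a $0$-form on path space, whose integral over $I^{k-1}$ vanishes for $k\ge 2$; so $\hol_\alpha$ is supported on $1$-simplices and there is no ``higher parallel transport'' to compare with. Replacing your final paragraph by this one-line degree observation (which is exactly how the paper opens its proof) closes the argument.
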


\begin{proof}
By degree reasons, $\alpha$ is an element of $\g\hat{\otimes} \Omega^1(M)$ and $\hol_{\alpha}$ is an element of $\U(\g) \hat{\otimes} C^1(M)$.
Let $\gamma: [0,1] \to M$ be a path in $M$.
The pullback of $\alpha$ along $\gamma$ gives an element of
$\g\hat{\otimes} \Omega^1([0,1])$, which can be written as
$$\gamma^*\alpha = \sum_{i=1}^{\infty}\xi^i\otimes a_i(t) dt,$$
where $\mathrm{deg}(\xi^{i}) \to \infty$. The degree of an element in a filtered graded vector space $V$
is the integer $k$ such that  the element is contained in $F_kV$, but not in $F_{k+1}V$.

We consider $\hol_{\alpha} \in \U(\g)\hat{\otimes} C^1(M)$ as a map
from $C_1(M)$ to $\hat{\U}(\g)$. By definition, the evaluation of this map on the path $\gamma$ yields
$$ \sum_{k\ge 1} \sum_{i_1\ge 1,\cdots,i_k\ge 1} (\xi^{i_1} \cdots \xi^{i_k}) \left(\int_{1\ge t_1\ge \cdots \ge t_k \ge 0} a_{i_1}(1-t_1)\cdots a_{i_k}(1-t_k) dt_1\cdots dt_k\right).$$
Up to a shift by $1 \in \U(\g)$, this is the unique solution to the ordinary differential equation
$$ H_0 = 1, \qquad \frac{d}{dt}H_t = \left(\sum_{i=1}^{\infty}\xi^{i}\otimes a_i(1-t) \right)\cdot H_t$$
in $\U(\g)\hat{\otimes} \mathcal{C}^{\infty}([0,1])$.
Hence, $\hol_{\alpha}(\gamma)$ encodes parallel transport of $\alpha$ along $\gamma$ (with reversed orientation).
\end{proof}

The holonomies defined in Theorem~\ref{main theorem} satisfy the following naturality conditions:

\begin{lemma}
Suppose that $\alpha$ is a flat connection on $M$ with values in the filtered $L_\infty$-algebra  $\mathfrak{g}$.
\begin{enumerate}
\item If $f\colon  N \rightarrow M$ is a smooth map, then $\hol^{\infty}_{f^*(\alpha)}=\hol^\infty_\alpha \circ f_*$.
\item If $\gamma\colon  \g \rightarrow \mathfrak{h}$ is a filtered morphism, then
$\hol^\infty_{\gamma_*(\alpha)}=\hat{\B} \hat{\U}_\infty(\gamma) \circ \hol^\infty_\alpha .$

\end{enumerate}
\end{lemma}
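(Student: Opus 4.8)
The plan is to reduce both naturality statements to the naturality of each elementary arrow occurring in the definition of $\holinfty_\alpha$, together with the naturality of the passage from a Maurer--Cartan element in $\Uinfty(\g)\hat\otimes C^\bullet(M)$ to a morphism of differential graded coalgebras $C_\bullet(M)\to \hat\B\hatUinfty(\g)$. Recall that $\holinfty_\alpha$ is the image of $\alpha\in\g\hat\otimes\Omega(M)$ under the composite $(\widehat{\id\otimes\psi})_*\circ\hat\tau\circ\hat\iota\circ(\widehat{\eta\otimes\id})_*$. Since each of these four maps preserves Maurer--Cartan elements and the final identification with $\Hom_{\dgc_a}(C_\bullet(M),\hat\B\hatUinfty(\g))$ is itself functorial (it is the composite of the inclusion $\Uinfty(\g)\hat\otimes C^\bullet(M)\hookrightarrow \Hom(C_\bullet(M),\hatUinfty(\g))$ with the twisting-cochain/bar correspondence of Appendix~\ref{section:twisting_cochains}), it suffices to check that under both kinds of operation---pullback along $f\colon N\to M$ and pushforward along a filtered $L_\infty$-morphism $\gamma\colon\g\to\mathfrak h$---the image of $\alpha$ in $\Uinfty(\g)\hat\otimes C^\bullet(M)$ is carried to the image of the corresponding transformed connection in $\Uinfty(\mathfrak h)\hat\otimes C^\bullet(N)$ (resp. $\Uinfty(\g)\hat\otimes C^\bullet(N)$) by the evident induced map.

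For part~(1), I would fix the $L_\infty$-algebra $\g$ and trace $\alpha$ through the composite, noting that a smooth map $f\colon N\to M$ induces a morphism $f^*\colon\Omega(M)\to\Omega(N)$ of commutative differential graded algebras, hence morphisms $\g\hat\otimes\Omega(M)\to\g\hat\otimes\Omega(N)$, $\ST(\g)\hat\otimes\Omega(M)\to\ST(\g)\hat\otimes\Omega(N)$, etc., by functoriality of $-\hat\otimes -$ in the second slot. The maps $(\widehat{\eta\otimes\id})_*$, $\hat\iota$ and $\hat\tau$ are natural in the commutative differential graded algebra argument---$\hat\tau$ by the explicit formula of Lemma~\ref{lemma:coefficients}, which involves only the multiplication of the coefficient algebra---so they commute with the maps induced by $f^*$. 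The only non-formal point is the square involving $(\widehat{\id\otimes\psi})_*$: here one needs that Gugenheim's $\Ainfty$-morphism $\psi$ is natural with respect to pullback along smooth maps, which is precisely the last clause of Theorem~\ref{theorem:A_infty_quasi-isomorphism}. Combining these, the image of $\alpha$ maps to the image of $f^*\alpha$; passing to the coalgebra description and using that $f_*\colon C_\bullet(N)\to C_\bullet(M)$ is dual to $f^*\colon C^\bullet(M)\to C^\bullet(N)$ gives $\hol^\infty_{f^*\alpha}=\hol^\infty_\alpha\circ f_*$.

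For part~(2), I would instead fix the manifold $M$ and vary the $L_\infty$-algebra. A filtered morphism $\gamma\colon\g\to\mathfrak h$ induces $\gamma\otimes\id_{\Omega(M)}\colon\g\otimes\Omega(M)\to\mathfrak h\otimes\Omega(M)$, and by Lemma~\ref{functoriality of completion} (applied as in the commutative diagram after the tensor-product remark) a compatible map on completions. Functoriality of $\ST$ (it is $\LF\circ\CE$), of $\U$, and of $\hatUinfty=\U\circ\ST$ on filtered objects (Lemmas~\ref{lemma:filtration_strictification} and~\ref{lemma:filtration_enveloping}) then yields that $\hatUinfty(\gamma)\hat\otimes\id_{C^\bullet(M)}$ intertwines the two composites defining $\holinfty$; the needed compatibility of $\eta$, $\iota$, $\tau$ with $\gamma$ is naturality of these transformations in the $L_\infty$-algebra, and the $\psi$-step is unaffected since $M$ is fixed. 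Finally, under the twisting-cochain correspondence, the map $\hatUinfty(\gamma)$ on coefficients corresponds to $\hat\B\hatUinfty(\gamma)$ on bar complexes, giving $\hol^\infty_{\gamma_*\alpha}=\hat\B\hatUinfty(\gamma)\circ\hol^\infty_\alpha$. I expect the main obstacle to be purely bookkeeping: checking that $\tau$ from Lemma~\ref{lemma:coefficients} is genuinely natural in both the $L_\infty$-algebra and the coefficient algebra, since it was defined abstractly via a chain of adjunction isomorphisms rather than by a closed formula---one must verify that the auxiliary morphism $\gamma\circ\eta$ and the isomorphism $\Sigma(C\otimes A)\cong\Sigma(C)\otimes A$ are themselves natural, after which everything else is formal nonsense or a direct appeal to the cited results.
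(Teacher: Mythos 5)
Your proposal is correct and follows essentially the same route as the paper: part (1) is reduced to the naturality clause of Gugenheim's $\Ainfty$-morphism (Theorem~\ref{theorem:A_infty_quasi-isomorphism}) together with the formal naturality of the remaining arrows in the differential graded algebra argument, and part (2) to the functoriality of the whole construction ($\eta$, $\ST$, $\U$, $\tau$, and the twisting-cochain/bar correspondence) in the coefficient $L_\infty$-algebra. The paper's proof states exactly these two points, only more tersely; your additional bookkeeping (e.g.\ checking naturality of $\tau$ in both variables) is a legitimate elaboration rather than a different argument.
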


\begin{proof}
The first claim follows directly from the naturality of Gugenheim's
$A_\infty$-morph\-ism with respect to the pullback along smooth maps. The second claim is clear since the whole construction is functorial with respect to the coefficient system $\g$.
\end{proof}

\section{Flat connections on configuration spaces}

So far, we constructed an extension of Igusa's higher holonomies \cite{I} to the framework of flat connection with values in $L_\infty$-algebras.
In this section, we explain how rational homotopy theory provides a vast amount of such connections.
We then turn to a specific family of examples, the configuration spaces $\Conf_d(n)$ of $n$ points in $\mathbb{R}^d$ ($d \ge 2$). In \cite{K}, Kontsevich
constructed explicit models for these spaces and used them to establish formality of the chains of the little $d$-disks operad.
We consider the corresponding flat connections, extending considerations of
\v{S}evera and Willwacher \cite{SW} to the higher-dimensional
situation.
Finally, we explain how one can use these flat connections to construct representations of the $\infty$-groupoid of $\Conf_d(n)$,
generalizing the holonomy representations of braid groups.

\subsection{Flat connections and rational homotopy theory}\label{subsection:rational_homotopy_theory}

A Sullivan minimal model of a manifold $M$ is a differential graded algebra
$(A_M,d)$ that is homotopy equivalent to $\Omega(M)$ and is isomorphic, as
a graded algebra, to the free graded commutative algebra $\wedge V$  on a graded vector space $V$. For more details on the definition, we refer the reader to \cite{Sullivan,F}.
For simplicity, we will assume that the homogeneous components of $V$ are finite dimensional.
Such a model exists, for instance, if $M$ has vanishing first cohomology and finite Betti numbers.

As was observed in \cite{Getzler}, the information of a Sullivan model can
be encoded by a flat connection on $M$ that takes values in an $L_\infty$-algebra:
Let $\g$ be the graded vector space with $\g^k = (V^{-k+1})^*$; i.e.,\ $\g$ is the desuspension of the graded dual $V^*$ of $V$.
Observe that since $V$ is concentrated in strictly positive degrees, $\g$ is concentrated in non-positive degrees.
Recall that $\Sym(\s \g)$ denotes the symmetric coalgebra on $\s \g$, the suspension of $\g$.
We equip $\g$ with structure maps
$\mu_n: \Sym^n(\s \g) \to \s \g$
of degree $+1$ given by
$$ \Sym(\s \g) = \Sym(V^*) \hookrightarrow (\Sym(V))^* \rightarrow (\us V)^* \cong \s(\s \g).$$
Here the arrow in the middle that goes from $(\Sym(V))^*$ to $(\us V)^*$ is the map dual of the restriction of the differential $d$ of $\wedge V$ to $V$.
The fact that $d$ squares to zero implies that the maps $(\mu_n)_{n \ge 1}$ equip $\s \g$ with the structure of an $L_\infty$-algebra.
The next step is to consider the morphism $\varphi$. Since $\wedge V$ is free as a commutative graded algebra, it suffices to
know its restriction to $V$. If we choose a homogeneous basis $(v_i)_{i\in I}$ of $V$, we obtain an element
$$\alpha_\varphi := \sum_{i} \varphi(v_i)\otimes v_i$$
of $\Omega(M)\otimes V^*$.
We now consider $\alpha$ as an element of $\Omega(M)\otimes \g$. As such,
$\alpha_\varphi$ has degree $+1$, and the fact that $\varphi$ is a morphism
of commutative differential graded algebras implies that $\alpha_\varphi$ is a Maurer--Cartan element of the differential graded
Lie algebra $\Omega(M)\otimes \g$.
It is clear that one can reconstruct the Sullivan model $(\wedge V,d)$ from $\g$ and $\alpha_\varphi$.
To sum up our discussion, we record the following:
\begin{lemma}\label{lemma:Sullivan_models}
Every finite type Sullivan model of a manifold $M$ corresponds in a natural way to a flat connection on 
$M$ with values in an $L_\infty$-algebra.
\end{lemma}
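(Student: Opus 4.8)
The plan is to unwind the construction that precedes the statement and observe that it is already, essentially, a proof --- what remains is to check naturality. Concretely, given a finite type Sullivan minimal model $(\wedge V, d)$ of $M$ together with the homotopy equivalence $\varphi\colon \wedge V \to \Omega(M)$, I would first record the graded vector space $\g$ with $\g^k = (V^{-k+1})^*$, and verify that the dual of $d|_V\colon V \to \Sym(V)$ furnishes maps $\mu_n\colon \Sym^n(\s\g) \to \s\g$ of degree $+1$. The identity $d^2 = 0$ translates, under dualization, into precisely the (quadratic) $L_\infty$-relations among the $\mu_n$; this is the one genuine computation, but it is classical (it is the Harrison/Quillen dual picture) and I would cite \cite{Getzler,F} rather than carry it out. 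Note that finite-dimensionality of the homogeneous pieces of $V$ is what makes $\Sym(V^*) \hookrightarrow (\Sym V)^*$ and the identification $(\us V)^* \cong \s(\s\g)$ legitimate; I would flag this as the place where the hypothesis is used.

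Next I would exhibit the flat connection itself. Choosing a homogeneous basis $(v_i)_{i\in I}$ of $V$ and setting $\alpha_\varphi := \sum_i \varphi(v_i) \otimes v_i \in \Omega(M) \otimes V^*$, reinterpreted as a degree~$1$ element of $\Omega(M) \otimes \g$, the claim is that $\alpha_\varphi \in \mathsf{MC}(\g \hat\otimes \Omega(M))$. I would prove this by the standard adjunction argument: a degree~$1$ element $\beta$ of $\g \hat\otimes \Omega(M)$ is Maurer--Cartan exactly when the associated algebra map $\wedge V = \CE(\g)^\vee \to \Omega(M)$ determined by $v_i \mapsto \langle \beta, v_i\rangle$ commutes with differentials; applied to $\beta = \alpha_\varphi$ this map is $\varphi$, and $\varphi$ is a chain map by hypothesis. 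One should also check the filtration issue: the lower central series (Section~\ref{subsection:filtered}) gives $\g$ the structure of a filtered $L_\infty$-algebra, so that $\g \hat\otimes \Omega(M)$ and the Maurer--Cartan condition make sense --- here $\g$ being concentrated in non-positive degrees and $V$ in strictly positive degrees is what guarantees the relevant sums are finite in each filtration degree, so the completion $\hat\otimes$ does not change anything.

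Finally, for naturality, I would spell out what "corresponds in a natural way" means: a morphism of Sullivan models over a smooth map $f\colon N \to M$ (a commuting square relating $\varphi_M$, $\varphi_N$, $f^*$, and an algebra map $\wedge V_N \to \wedge V_M$ or $\wedge V_M \to \wedge V_N$) should dualize to a filtered $L_\infty$-morphism $\g_M \to \g_N$ intertwining $f^*(\alpha_{\varphi_M})$ and $\alpha_{\varphi_N}$ via the functoriality of $-\hat\otimes\Omega(M)$ and $f^*$. Since all operations in sight ($V \mapsto (\us V^*)$, $\CE$, $\Sym$, and the completion functor of Lemma~\ref{functoriality of completion}) are functorial, this reduces to bookkeeping. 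The main obstacle --- and it is mild --- is keeping the degree and filtration conventions consistent through the chain of dualizations, in particular making sure the desuspension/suspension shifts in $\g^k = (V^{-k+1})^*$ match the degree-$+1$ normalization of the $\mu_n$ and of $\alpha_\varphi$; once that is fixed, every step is a citation or a one-line adjunction.
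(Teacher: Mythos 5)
Your proposal is correct and follows essentially the same route as the paper, whose ``proof'' is exactly the construction preceding the lemma: dualize $d|_V$ (using finite type) to get the $L_\infty$-structure on $\g$ with $\g^k=(V^{-k+1})^*$, and read off $\alpha_\varphi=\sum_i \varphi(v_i)\otimes v_i$ as a degree-$1$ Maurer--Cartan element because $\varphi$ is a morphism of commutative differential graded algebras. Only your side remark that the lower central series automatically filters $\g$ is not justified (the paper explicitly warns it may fail to be a filtration, and instead handles convergence of the holonomies later by assuming $M$ simply connected), but this does not affect the lemma as stated.
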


Let $\alpha_\varphi$ be a flat connection on $M$ associated to a Sullivan model $\varphi: \wedge V \to \Omega(M)$.
In order for the holonomy map
$\holinfty_{\alpha_\varphi}$
from Theorem~\ref{main theorem} to be well defined, we need the series which define it to converge. In Theorem~\ref{main theorem}, this is guaranteed by the assumption that
$\g$ is filtered.
For flat connections associated to Sullivan models, we will circumvent this problem by assuming that $M$ is simply connected.
This allows us to assume that $V$ is concentrated in degrees strictly larger than $+1$, which in turn implies that $\g$ is concentrated
in strictly negative degrees. Consequently, the components of $\alpha_\varphi$ of form degree $0$ and $1$ are zero and no divergent
sums appear in the definition of the holonomy map $\holinfty_{\alpha_\varphi}$.

\begin{theorem}
Let $\varphi\colon \wedge V\to \Omega(M)$ be a Sullivan model of a manifold $M$, and assume that $\wedge V$ is of finite type and $V^1=0$.
Then the holonomy map associated to the flat connection $\alpha_\varphi$ on $M$ with values in $\g$ yields a morphism of differential graded coalgebras
$\holinfty_{\alpha_\varphi}: C_\bullet(M) \to \B\Uinfty(\g). $
\end{theorem}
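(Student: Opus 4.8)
The plan is to verify that the general holonomy construction of Theorem~\ref{main theorem} specializes, under the finiteness and connectivity hypotheses, to a morphism landing in the \emph{honest} (uncompleted) bar complex $\B \Uinfty(\g)$. The only subtlety in Theorem~\ref{main theorem} that forces the completed bar complex $\hat\B$ is the failure of convergence and the non-connectedness of $C_\bullet(M)$; so the essential point is to show that, in the present situation, the infinite sums defining $\holinfty_{\alpha_\varphi}$ are actually \emph{finite} on each simplex, so that the associated twisting cochain takes values in an uncompleted algebra and the comultiplication of $\B\Uinfty(\g)$ lands in the genuine tensor product. First I would set up the $L_\infty$-algebra $\g$ exactly as in the paragraph preceding the statement: $\g^k = (V^{-k+1})^*$, which under the hypothesis $V^1 = 0$ (so $V$ is concentrated in degrees $\ge 2$) is concentrated in degrees $\le -1$, i.e.\ \emph{strictly negative}. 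This is the key structural fact to exploit.

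Next I would observe that $\g$ carries a canonical filtration making it a \emph{filtered} $L_\infty$-algebra to which Theorem~\ref{main theorem} applies: since $\g$ is of finite type and concentrated in strictly negative degrees, one can take $F_k(\g) := \bigoplus_{j \le -k} \g^j$ (the ``degree'' filtration), which is decreasing, has trivial intersection, and is compatible with the brackets because the $\mu_n$ have degree $+1$ and hence strictly lower the filtration weight of a product of $\ge 1$ inputs — actually one must check that $[F_{l_1},\dots,F_{l_k}] \subseteq F_{l_1+\dots+l_k}$, which follows since a bracket of elements of degrees $\le -l_i$ has degree $\le -(l_1+\dots+l_k)+1 \le -(l_1+\dots+l_k-1)$; to get the clean statement one should instead index the filtration so that $F_k(\g)=\bigoplus_{j\le -k}\g^j$ shifted appropriately, or simply use the lower central series, which by the remark following Definition~\ref{definition:MC} is automatically the minimal filtration once any filtration exists. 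With $\g$ filtered, Theorem~\ref{main theorem} already gives $\holinfty_{\alpha_\varphi}\colon C_\bullet(M) \to \hat\B\hatUinfty(\g)$; the remaining work is to promote $\hat\B\hatUinfty$ to $\B\Uinfty$.

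The heart of the argument is the finiteness claim. Since $M$ is simply connected we may choose the Sullivan model so that $V$, hence $\g$, is concentrated in degrees $\le -1$ as above; consequently the Maurer--Cartan element $\alpha_\varphi \in \g\,\hat\otimes\,\Omega(M)$ has, in each fixed total degree $1$, only finitely many components: writing $\alpha_\varphi = \sum_i \varphi(v_i)\otimes \xi^i$ with $\xi^i \in \g$ of degree $d_i \le -1$ and $\varphi(v_i) \in \Omega^{d_i+1}(M)$ of form-degree $\le \dim M$, only the finitely many basis vectors with $1 \le d_i + 1 \le \dim M$ contribute. I would then trace this finiteness through the composite defining $\holinfty_{\alpha_\varphi}$ — the strictification map $(\widehat{\eta\otimes\id})_*$, the inclusion $\hat\iota$, the map $\hat\tau$ of Lemma~\ref{lemma:coefficients}, and Gugenheim's $(\widehat{\id\otimes\psi})_*$ — using the grading: Gugenheim's $\psi_n$ applied to a tensor of $1$-forms produces a cochain of degree $n - $(sum of form degrees), and since $\Uinfty(\g)$ is non-positively graded while $C^\bullet(M)$ is non-negatively graded, in each fixed total degree of $\Uinfty(\g)\otimes C^\bullet(M)$ only finitely many tensor-word-lengths $n$ and only finitely many of the $\xi^i$ can appear. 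Hence $\holinfty_{\alpha_\varphi}$, viewed via the inclusion $\Uinfty(\g)\otimes C^\bullet(M) \hookrightarrow \Hom(C_\bullet(M),\Uinfty(\g))$, actually lands in the uncompleted $\Uinfty(\g)\otimes C^\bullet(M)$, i.e.\ defines an \emph{ordinary} twisting cochain on $C_\bullet(M)$ with values in the augmentation ideal of $\Uinfty(\g)$. By the standard correspondence between twisting cochains $C_\bullet(M)\to \Uinfty(\g)$ and coalgebra morphisms $C_\bullet(M)\to \B\Uinfty(\g)$ (Appendix~\ref{section:twisting_cochains}), this yields the desired morphism of differential graded coalgebras $\holinfty_{\alpha_\varphi}\colon C_\bullet(M)\to \B\Uinfty(\g)$, and it is automatically compatible with the one into $\hat\B\hatUinfty(\g)$ from Theorem~\ref{main theorem}.

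The main obstacle I expect is bookkeeping the finiteness claim cleanly: one must argue that, although $C_\bullet(M)$ is not connected (which is exactly what forced $\hat\B$ in general), the non-positivity of the grading on $\Uinfty(\g)$ together with $V^1 = 0$ bounds the word-length $n$ in any fixed internal degree, so that no genuinely infinite sums survive; I would phrase this as a ``local conilpotency/finiteness'' statement for the twisting cochain and check it degree by degree. Once that is in place, everything else is a direct consequence of the already-established functoriality of the constructions (Lemma~\ref{lemma:coefficients}, Theorem~\ref{theorem:A_infty_quasi-isomorphism}) and the twisting-cochain formalism of Appendix~\ref{section:twisting_cochains}.
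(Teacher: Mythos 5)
Your core argument---$V^1=0$ forces $\g$ to be concentrated in strictly negative degrees, hence every component of $\alpha_\varphi$ has form degree at least $2$, hence on a $k$-simplex only words of bounded length contribute and all sums in the construction are finite, so the twisting cochain takes values in the uncompleted $\Uinfty(\g)$ and the associated coalgebra morphism lands in $\B\Uinfty(\g)$---is exactly the paper's reasoning: it observes that the components of $\alpha_\varphi$ of form degree $0$ and $1$ vanish, so no divergent sums appear and no completion is needed.

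The flawed step is the preliminary one, where you first equip $\g$ with a filtration in order to invoke Theorem~\ref{main theorem} and then ``promote'' $\hat{\B}\hatUinfty(\g)$ to $\B\Uinfty(\g)$. The degree filtration $F_k(\g)=\bigoplus_{j\le -k}\g^j$ is not a filtration in the paper's sense: the axiom with $k=1$ requires each $F_l$ to be stable under the unary bracket, i.e.\ under the differential, which has degree $+1$ and therefore preserves no degree truncation; re-indexing or shifting does not cure this. The fallback ``use the lower central series, which is the minimal filtration once any filtration exists'' is circular: its separatedness $\bigcap_k F_k(\g)=0$ is guaranteed only when some filtration already exists, which is precisely what you have not established, and for a general (non-nilpotent) $\g$ arising from a Sullivan model it need not hold. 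This is why the paper deliberately circumvents the filtration hypothesis in this situation: rather than routing through Theorem~\ref{main theorem}, one runs the construction of $\holinfty_{\alpha_\varphi}$ directly, noting that the Maurer--Cartan equation for $\alpha_\varphi$, the maps $\eta_*$, $\tau$, $(\id\otimes\psi)_*$, and the passage from the twisting cochain to a coalgebra morphism all involve only finitely many terms, for exactly the degree reasons in your third paragraph (algebra factors in degree $\le -1$, cochain factors in degree $\ge 2$, word length bounded by the simplex dimension). If you delete the filtration detour and present that finiteness argument as the definition of the holonomy map, your proof is complete and coincides with the paper's.
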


\begin{remark}
If one composes $\holinfty_{\alpha_\varphi}$ with the projection map
$\B\Uinfty(\g) \cong \B\Omega \CE(\g) \to \CE(\g), $
one obtains essentially the dual to
$$
\xymatrix{
\wedge V \ar[r]^\varphi & \Omega(M) \ar[r]^{\int} & C^{\bullet}(M),
}
$$
where the last map is the usual integration map.
Hence, under mild assumptions (e.g.,\ compactness of $M$) the holonomy map $\holinfty_{\alpha_\varphi}$ will be a quasi-isomorphism of differential graded coalgebras.
Since the adjunction morphism
$ \CE(\g) \to \B\Omega \CE(\g)$
is a quasi-isomorphism, we obtain that $\CE(\g)$ and $C_\bullet(M)$ are quasi-isomorphic dg coalgebras.
Notice that the Baues--Lemaire conjecture \cite{BL}, which was proven by Majewski \cite{Majewski,Majewski_book},
asserts that the strictification $\ST(\g)$ of $\g$ is quasi-isomorphic to Quillen's Lie algebra model $L_M$ of $M$ (\cite{Quillen}).
Hence $\CE(\g)$ is quasi-isomorphic---as a differential graded coalgebra---to Quillen's coalgebra model $\CE(L_X)$.

\end{remark}

\subsection{Flat connections on configuration spaces}\label{subsection:configuration_spaces}

We now turn to a family of specific examples, the configuration spaces of $n$ (numbered) points in $\mathbb{R}^d$, i.e.,
$$ \Conf_d(n):= \{x_1,\dots,x_n \in \mathbb{R}^d: \quad x_i\neq x_j \textrm{ for } i\neq j\}.$$

It turns out to be convenient to consider a natural compactification of
$\Conf_d(n)$ to a semi-algebraic manifold with corners, the Fulton--MacPherson space $\FM_d(n)$.
To obtain these compactifications, one first mods out the action of $\mathbb{R}^d \rtimes \mathbb{R}_{>0}$ by translations and scalings on $\Conf_d(n)$
and then embeds the quotient into
$$ (S^{d-1})^{n \choose 2} \times ([0,\infty])^{n \choose 3}$$
via all relative angles and cross-ratios.
The closure of this embedding naturally admits the structure of a semi-algebraic manifold with corners.
We refer the reader to \cite{Lambrechts,Sinha} for the details of this construction.

The cohomology ring of $\Conf_2(n)$ was determined by Arnold \cite{Arnold} and in higher dimensions by Cohen \cite{Cohen}:
$H^*(\Conf_d(n))$ is the graded commutative algebra
with a set of generators $(\omega_{ij})_{1\le i\neq j \le n}$ of degree $(d-1)$ and the following relations:
$$\omega_{ij} = (-1)^{d}\omega_{ji}, \quad \omega_{ij}\omega_{jk} + \omega_{jk}\omega_{ki} + \omega_{ki}\omega_{ij} = 0.$$

\subsubsection{Kontsevich's models for configuration spaces}

In \cite{K}, Kontsevich constructed a family of graph complexes $^*\Graphs_d(n)$, together with integration maps
$$I:\, ^*\Graphs_d(n) \to \Omega(\FM_d(n)),$$
which are quasi-isomorphisms of commutative differential graded algebras. In dimension $d>2$,
the commutative differential graded algebras $^*\Graphs_d(n)$, together with the integration map $I$, define Sullivan models for $\FM_d(n)$.\footnote{For dimension equal to $2$, the problem is that $^*\Graphs_2(n)$
is not concentrated in positive degrees. Moreover, $I$ does not take values
in smooth differential forms, but in piecewise semialgebraic forms; see \cite{real_homotopy_type} and \cite{Lambrechts} for the technical details.}

Let us recall the definition of $^*\Graphs_d(n)$, following \cite{K} and \cite{Lambrechts}:

\begin{definition}
An admissible graph with parameters $(n,m,k)$, where $ n \geq 1, m \geq 0$, is a finite graph $\Gamma$ such that:
\begin{enumerate}
\item $\Gamma$ has no simple loops.
\item $\Gamma$ contains $n$ \emph{external} vertices, numbered from $1$ to $n$, and $m$ \emph{internal} vertices numbered from $1$ to $m$.
\item $\Gamma$ contains $k$ edges, numbered from $1$ to $k$.
\item Any vertex in $\Gamma$ can be connected by a path to an external vertex.
\item All internal vertices have valency at least $3$.
\item The edges of $\Gamma$ are oriented.
\end{enumerate}
For $n=0$, there is just one graph with parameters $(0,0,0)$, the empty graph ${\emptyset}$.
\end{definition}

\begin{definition}
For every $n\geq 0$ and $d\geq 2$ define $^*\Graphs_d(n)$ to be the
$\mathbb{Z}$-graded vector space over $\mathbb{R}$ generated by equivalence
classes of isomorphism classes of admissible graphs with parameters
$(m,n,k)$. The equivalence relation is generated by the following three conditions:
\begin{itemize}
\item $\Gamma \equiv (-1)^{(d-1)} \Gamma'$, 
if $\Gamma$ differs from $\Gamma'$ by a transposition in the labelling of the edges.
\item $\Gamma \equiv (-1)^{d} \Gamma'$, 
if $\Gamma$ differs from $\Gamma'$ by a transposition in the numbering of the internal vertices.
\item  $\Gamma \equiv (-1)^{d} \Gamma'$, 
if $\Gamma'$ is obtained from $\Gamma$ by reversing the orientation of one of the edges.
\end{itemize}
We define the degree of a  class $[\Gamma]$ with parameters $(n,m,k)$ to be
\[|[\Gamma]|:=(d-1)k-dm.\]
Thus $^*\Graphs_d(n)$ is the direct sum of homogenous components:
\[^*\Graphs_d(n)=\bigoplus_{i\in \mathbb{Z}}\, ^*\Graphs_{d}(n)^i.\]
\end{definition}

\begin{remark}
In view of the equivalence relation, we may assume that, for even $d$,
graphs have no multiple edges, are unoriented, and internal vertices are not ordered. Similarly, for odd $d$, one may assume that the edges are not ordered.
\end{remark}

\begin{definition}
The graded vector spaces $^*\Graphs_d(n)$ have a natural structure of commutative dg algebras. 
The product $\Gamma_1\bullet \Gamma_2$ of $\Gamma_1$ and $\Gamma_2$ is their disjoint union, with the corresponding
external vertices identified. The order in the edges is such that the order of each of the graphs is preserved and $e_1<e_2$ if
$e_i $ belongs to $\Gamma_i$.  Similarly, the numbering of the internal vertices is characterized by the fact that the order in each of
the graphs is preserved and vertices in $\Gamma_1$ have labels smaller than those in $\Gamma_2$. 
The differential $\partial$ is given by the sum over all graphs obtained by contracting one of the edges.
For more precise details on the sings of the differential, please see \cite{Lambrechts}.

\end{definition}

\begin{proposition}[\cite{K, Lambrechts}]
The operations $\bullet$ and $\partial$ give $^*\Graphs_d(n)$ the structure of a commutative differential graded algebra.
\end{proposition}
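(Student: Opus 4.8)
The plan is to verify the axioms of a commutative differential graded algebra one at a time, taking the sign prescription for $\partial$ from \cite{Lambrechts} as given and concentrating on the structural points. \emph{The product.} First I would check that $\bullet$ is well defined on equivalence classes: forming the disjoint union and identifying the external vertices sends admissible graphs to admissible graphs --- it creates no simple loops, leaves every valency unchanged (in particular internal vertices stay of valency $\ge 3$), and preserves the property that each vertex can be joined to an external vertex --- and the orderings of edges and of internal vertices on $\Gamma_1\bullet\Gamma_2$ are induced from those on the factors, so a transposition of labels in one factor induces a transposition in the product and the three sign rules are respected. Associativity is immediate, and the graph $u_n$ on $n$ external vertices with no edges and no internal vertices is admissible of degree $0$ and a two-sided unit. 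For the degree and graded commutativity I would argue directly: $(k,m)$ is additive under $\bullet$, so $|[\Gamma_1\bullet\Gamma_2]| = |[\Gamma_1]| + |[\Gamma_2]|$, and exchanging the factors costs $k_1k_2$ edge transpositions and $m_1m_2$ internal-vertex transpositions, i.e. a sign $(-1)^{(d-1)k_1k_2 + d m_1 m_2}$; using $(d-1)^2 \equiv d-1$, $d(d-1)\equiv 0$ and $d^2 \equiv d \pmod 2$ this is exactly $(-1)^{|[\Gamma_1]|\,|[\Gamma_2]|}$.

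\emph{The differential.} Next I would check that $\partial$ is a well-defined map of degree $+1$. Contracting an edge $e$ of $\Gamma$ --- with the conventions that $e$ has at least one internal endpoint and that a contraction producing a simple loop is set to zero --- yields an admissible graph: the pair $(k,m)$ changes by $(-1,-1)$, so the degree changes by $(d-1)(-1) - d(-1) = +1$; the merged vertex is external whenever one endpoint was, and otherwise has valency $\ge 3+3-2 \ge 3$; and ``every vertex reaches an external vertex'' is preserved. Compatibility of $\partial$ with the edge-relabelling, internal-vertex-relabelling and (for odd $d$) edge-reversal relations is precisely what the sign prescription of \cite{Lambrechts} arranges, so here I would simply invoke it.

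\emph{Square-zero and the Leibniz rule.} The identity $\partial^2 = 0$ is the one genuinely delicate point. I would write $\partial^2\Gamma$ as a sum over ordered pairs of distinct contractible edges $(e,e')$: contracting $e$ then $e'$ and contracting $e'$ then $e$ produce the same graph, and the signs in \cite{Lambrechts} are chosen exactly so that the two resulting terms are opposite, whence the sum cancels in pairs. The verification of these opposite signs --- including the case in which $e$ and $e'$ share a vertex --- is the main obstacle, and I would import it from \cite{Lambrechts} rather than redo the bookkeeping. Finally, for the Leibniz rule I would use that every edge of $\Gamma_1\bullet\Gamma_2$ lies in exactly one factor and that contracting such an edge involves only that factor: the shared external vertices play no active role, and no new simple loop can appear, since that would force the identification of two external vertices. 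Hence $\partial(\Gamma_1\bullet\Gamma_2)$ splits as the sum of the contractions carried out inside $\Gamma_1$ and inside $\Gamma_2$, and commuting $\partial$ past $\Gamma_1$ costs the Koszul sign $(-1)^{|[\Gamma_1]|}$, so
\[ \partial(\Gamma_1\bullet\Gamma_2) = \partial\Gamma_1\bullet\Gamma_2 + (-1)^{|[\Gamma_1]|}\,\Gamma_1\bullet\partial\Gamma_2, \]
which completes the verification.
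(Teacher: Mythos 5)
Your proposal is correct, and it matches the paper's treatment in the only sense available: the paper does not prove this proposition at all, but simply quotes it from \cite{K} and \cite{Lambrechts}, exactly where you send the delicate sign bookkeeping (well-definedness of $\partial$ on equivalence classes, $\partial^2=0$, and the Koszul sign in the Leibniz rule). The parts you do verify explicitly are right --- in particular the commutativity sign, since $|\Gamma_1||\Gamma_2|\equiv (d-1)k_1k_2+d\,m_1m_2 \pmod 2$ by the congruences you cite, and the degree count $(d-1)(k-1)-d(m-1)=|\Gamma|+1$ for an edge contraction --- so the proposal is a faithful outline of the standard verification carried out in the cited references.
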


To a graph $\Gamma$  in $^*\Graphs_d(n)$, one can associate a differential form $\omega_\Gamma \in \Omega(\FM_d(n))$ given by the formula
\[\omega_\Gamma:=\pi_* \Big( \bigwedge_{e \textrm{ edge of } \Gamma}(\pi_e)^* \mathsf{Vol}_{d-1} \Big), \qquad \text{where:}\]

\begin{itemize}
 \item The map $\pi\colon  \FM_d(n+m)\rightarrow \FM_d(n)$
is the natural projection that forgets the last $m$ points on the configuration space.
\item For each edge $e$ of $\Gamma$, $\pi_e\colon  \FM_d(n+m)\rightarrow \FM_d(2)=S^{d-1}$
is the map that sends a configuration of $m+n$ points to the two points that are joined by $e$.
\item $\mathsf{Vol}_{d-1}$ is the rotation invariant volume form of the
$(d-1)$-dimensional sphere, normalized so that its volume is~1.
\end{itemize}

\begin{theorem}[\cite{K,Lambrechts}]\label{proposition_above}
The formula 
$\Gamma \mapsto \omega_\Gamma$
defines a quasi-isomorphism of differential graded algebras:
$I\colon  ^*\Graphs_d(n)\rightarrow \Omega(\FM_d(n))$.
\end{theorem}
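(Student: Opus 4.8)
The plan is to check, in order, that $I$ is a morphism of graded algebras, that it is a chain map, and that it induces an isomorphism on cohomology; only the last two steps require genuine work.

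First I would verify that $\Gamma\mapsto\omega_\Gamma$ is well defined and multiplicative. The map $\pi\colon\FM_d(n+m)\to\FM_d(n)$ is a bundle of compact semi-algebraic manifolds with corners, so the fibre integral of the closed semi-algebraic form $\bigwedge_{e}(\pi_e)^*\mathsf{Vol}_{d-1}$ exists. It is unchanged, up to exactly the signs in the equivalence relation, under relabelling the edges (a transposition of two degree-$(d-1)$ factors gives $(-1)^{d-1}$), under relabelling the internal vertices (a transposition permutes two fibre factors and gives $(-1)^{d}$), and under reversing an edge (this replaces $(\pi_e)^*\mathsf{Vol}_{d-1}$ by its pull-back along the antipodal map of $S^{d-1}$, which has degree $(-1)^{d}$). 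Multiplicativity, $\omega_{\Gamma_1\bullet\Gamma_2}=\omega_{\Gamma_1}\wedge\omega_{\Gamma_2}$, follows because the corresponding face of $\FM_d(n+m_1+m_2)$ is the fibre product $\FM_d(n+m_1)\times_{\FM_d(n)}\FM_d(n+m_2)$, so the projection formula for fibre integration applies.

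Next I would show $d\omega_\Gamma=\pm\,\omega_{\partial\Gamma}$. By Stokes' theorem for fibre integration over a manifold with corners one has $d\circ\pi_*=\pm\,\pi_*\circ d\pm\pi^{\partial}_*$, where $\pi^{\partial}$ is the restriction of $\pi$ to the fibrewise boundary; since the volume forms are closed the first term vanishes. The fibrewise boundary is the union of codimension-one faces indexed by subsets $S$ of at least two colliding vertices. On a \emph{principal} face, $|S|=2$, the fibre integral reproduces the graph obtained from $\Gamma$ by contracting the edge between the two vertices of $S$, so the principal faces assemble to $\pm\,\omega_{\partial\Gamma}$. The step that requires real work is that every \emph{hidden} face, $|S|\ge 3$, contributes zero: this is Kontsevich's vanishing lemma. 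When $S$ involves at most one external vertex, a count of the dimension of the collapsed configuration together with the invariance of $\mathsf{Vol}_{d-1}$ under the antipode forces the integral over the face to vanish; when $S$ involves two or more external vertices, one shows that after rescaling the cluster the integrand extends continuously to the face in a way that kills it. I expect the careful bookkeeping of these vanishing statements to be the most delicate part of the argument.

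Finally I would deduce that $I$ is a quasi-isomorphism by factoring its effect on cohomology through $H^\ast(\Conf_d(n))\cong H^\ast(\FM_d(n))$, the Arnold--Cohen algebra, viewed as a cdga with zero differential. There is a cdga morphism $p\colon\,^*\Graphs_d(n)\to H^\ast(\Conf_d(n))$ that annihilates every graph with an internal vertex and sends a disjoint union of single edges $e_{i_1j_1}\bullet\cdots\bullet e_{i_rj_r}$ to $\omega_{i_1j_1}\cdots\omega_{i_rj_r}$; this is well defined --- the three-term relation is precisely $p$ applied to $\partial$ of the tripod with one internal vertex joined to three external ones --- and a direct check using the Arnold relations shows it is a chain map. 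It is surjective on cohomology because $\omega_{ij}$ lifts to the cocycle $e_{ij}$; injectivity, i.e.\ acyclicity of $\ker p$, is the combinatorial heart of the matter and is proved, following Lambrechts--Voli\'c, by filtering $\,^*\Graphs_d(n)$ by the number of internal vertices and analysing the resulting spectral sequence. Hence $p$ is a quasi-isomorphism, and $H^\ast(\,^*\Graphs_d(n))$ is generated, as an algebra, by the classes $[e_{ij}]$. Since $I(e_{ij})=(\pi_{ij})^*\mathsf{Vol}_{d-1}$ represents the generator $\omega_{ij}$ under the identification $H^\ast(\FM_d(n))\cong H^\ast(\Conf_d(n))$, the algebra map $I^\ast$ agrees with the isomorphism induced by $p$ on these generators, hence everywhere; therefore $I^\ast$ is an isomorphism, completing the proof.
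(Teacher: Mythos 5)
There is nothing in the paper to compare your argument against: Theorem~\ref{proposition_above} is imported verbatim from Kontsevich \cite{K} and Lambrechts--Voli\'c \cite{Lambrechts}, and the paper gives no proof of it. Your outline is, in substance, the standard proof from those references: well-definedness and multiplicativity of $\Gamma\mapsto\omega_\Gamma$ by elementary sign and Fubini arguments, the chain-map property by fiberwise Stokes plus Kontsevich's vanishing lemmas for hidden faces, and the quasi-isomorphism property by comparing with the Arnold--Cohen algebra via the projection $p$ that kills graphs with internal vertices. The two genuinely hard steps --- the hidden-face vanishing and the acyclicity of $\ker p$ (equivalently the computation of $H(^*\Graphs_d(n))$) --- are cited rather than proved, which is consistent with the paper's own treatment, but you should be explicit that these are exactly the points carrying the weight of \cite{K,Lambrechts}.

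Two details in your Stokes step are slightly off and worth correcting. First, in the fiberwise Stokes formula for $\pi\colon \FM_d(n+m)\to\FM_d(n)$, faces along which two or more \emph{external} vertices collide are not part of the fiberwise boundary at all: over an interior point of $\FM_d(n)$ the external points are fixed and distinct, so such faces lie over the boundary of the base and never enter the formula; your second vanishing case therefore does not arise in that form. Second, among the codimension-one faces with $|S|=2$ you must also handle pairs of vertices \emph{not} joined by an edge of $\Gamma$: these do not produce a term of $\partial\Gamma$, and one has to argue their contribution vanishes (the integrand on such a face is pulled back along the map forgetting the collapsed direction, a bundle with positive-dimensional fiber), so that only adjacent pairs assemble into the contraction differential. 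Finally, as the paper's footnote already indicates, the fiber integrals are only piecewise semi-algebraic forms in general, so a rigorous version of your argument takes values in $\Omega_{\mathrm{PA}}(\FM_d(n))$ as in \cite{Lambrechts} rather than in smooth forms.
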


\subsubsection{The \v{S}evera--Willwacher connections}

We next introduce flat connection on the compactified configuration spaces $\FM_d(n)$.
In the case $d=2$, these connections where introduced by \v{S}evera and Willwacher \cite{SW}.

\begin{definition}
We say that an admissible graph $\Gamma$ is internally connected if it is non-empty and connected after all the external vertices are removed. We denote by $\CG_d(n)$ the graded vector space spanned by equivalence classes of internally connected graphs with $n$ external vertices, and introduce a grading  by $\overline{\Gamma} :=1+dm-(d-1)k$.
\end{definition}

\begin{remark}
As explained in Subsection~\ref{subsection:rational_homotopy_theory},
Kontsevich's model $^*\Graphs_d(n)$ of the compactified configuration space $\FM_d(n)$
corresponds to a certain flat connection with values in an $L_\infty$-algebra.
Since $^*\Graphs_d(n)$ is the free commutative algebra on the space of internally connected graphs, the graded vector space underlying this $L_\infty$-algebra
is  the space of internally connected graphs $\CG_d(n)$.
The general machinery from
Subsection~\ref{subsection:rational_homotopy_theory} leads to following
definition\slash result:
\end{remark}

\begin{definition}
The \v{S}evera--Willwacher connection $\SW_d(n)$ on $\FM_d(n)$ with values in the $L_\infty$-algebra $\CG_d(n)$ is given by
\[\sum_\Gamma I(\Gamma)\otimes \Gamma \quad \in \quad \Omega(\FM_d(n))\hat{\otimes} \CG_d(n),\]
where the sum runs over a set of graphs whose equivalence classes form a basis of the graded vector space $\CG_d(n)$.
\end{definition}

\begin{proposition}
The \v{S}evera-Willwacher connections are flat.
\end{proposition}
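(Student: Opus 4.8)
The plan is to deduce the flatness of $\SW_d(n)$ directly from the general correspondence established in Subsection~\ref{subsection:rational_homotopy_theory}, rather than performing any graph-combinatorial computation by hand. Recall from Lemma~\ref{lemma:Sullivan_models} and the surrounding discussion that any finite-type Sullivan model $\varphi\colon \wedge V \to \Omega(M)$ of a manifold gives rise to a flat connection $\alpha_\varphi \in \Omega(M) \otimes \g$, where $\g$ is the desuspension of the graded dual of $V$, equipped with the $L_\infty$-structure dual to the differential of $\wedge V$; the Maurer--Cartan property of $\alpha_\varphi$ is \emph{equivalent} to the statement that $\varphi$ is a morphism of commutative differential graded algebras. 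So the whole task reduces to identifying $\SW_d(n)$ with the flat connection $\alpha_I$ attached to a suitable Sullivan model of $\FM_d(n)$.

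First I would take as the relevant model the Kontsevich integration map $I\colon {}^*\Graphs_d(n) \to \Omega(\FM_d(n))$ of Theorem~\ref{proposition_above}, which is a quasi-isomorphism of commutative differential graded algebras. The key structural observation, already flagged in the remark preceding the definition of $\SW_d(n)$, is that ${}^*\Graphs_d(n)$ is the \emph{free} graded commutative algebra on the space $\CG_d(n)$ of internally connected graphs: an arbitrary admissible graph is the disjoint union (i.e.\ the product $\bullet$) of its internally connected components. Thus ${}^*\Graphs_d(n) \cong \wedge V$ with $V = \CG_d(n)$ placed in the appropriate (positive, in dimensions $d>2$) degrees, and the desuspended graded dual of $V$ is precisely the $L_\infty$-algebra with underlying space $\CG_d(n)$ referred to in the definition of $\SW_d(n)$. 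Under the recipe of Subsection~\ref{subsection:rational_homotopy_theory}, choosing a homogeneous basis $(v_i)$ of $V$ -- i.e.\ a set of internally connected graphs $\Gamma$ whose classes form a basis of $\CG_d(n)$ -- the associated connection is exactly $\alpha_I = \sum_\Gamma I(\Gamma) \otimes \Gamma$, which is the definition of $\SW_d(n)$.

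Hence the proof is: (i) verify that ${}^*\Graphs_d(n)$ is free commutative on $\CG_d(n)$, so that $(\,{}^*\Graphs_d(n), \partial)$ is a (finite-type) Sullivan-type model and the construction of Subsection~\ref{subsection:rational_homotopy_theory} applies with $V = \CG_d(n)$; (ii) observe that $I$ is a morphism of commutative differential graded algebras by Theorem~\ref{proposition_above}; (iii) conclude by the equivalence recalled above that the connection $\alpha_I = \SW_d(n)$ is a Maurer--Cartan element of $\Omega(\FM_d(n)) \otimes \CG_d(n)$, i.e.\ flat. I expect step (i) to be the only point requiring care: one must check that the product $\bullet$ (disjoint union with external vertices identified) really does make ${}^*\Graphs_d(n)$ \emph{free} on the internally connected graphs -- this is essentially the statement that every admissible graph decomposes uniquely, up to the sign relations, into its internally connected pieces -- and that the grading conventions ($|[\Gamma]| = (d-1)k - dm$ versus $\overline\Gamma = 1 + dm - (d-1)k$) line up with the suspension/desuspension bookkeeping of the general construction. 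Once that bookkeeping is in place, flatness is automatic and no graph-by-graph analysis of $\partial$ or of the integrals $I(\Gamma)$ is needed. (One minor caveat, already noted in the paper's footnote, is that for $d=2$ the algebra ${}^*\Graphs_2(n)$ is not concentrated in positive degrees and $I$ lands in piecewise semialgebraic forms; the argument still applies verbatim since it only uses that $I$ is a morphism of commutative dg algebras into a dg commutative algebra tensored on which $\CG_2(n)$ is filtered.)
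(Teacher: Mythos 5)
Your argument is correct and is essentially the paper's own: the paper gives no separate proof but, in the remark immediately preceding the definition of $\SW_d(n)$, invokes exactly the general machinery of Subsection~\ref{subsection:rational_homotopy_theory} applied to Kontsevich's model, using that $^*\Graphs_d(n)$ is free commutative on $\CG_d(n)$ and that $I$ is a morphism of commutative differential graded algebras, so that the Maurer--Cartan property of $\SW_d(n)$ is equivalent to the morphism property of $I$. Your additional care about the grading bookkeeping and the $d=2,3$ caveat matches the paper's own footnote and subsequent remark on filtrations.
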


\begin{remark}
We remark that Kontsevich's model $^*\Graphs_d(n)$ for $\FM_d(n)$ is
concentrated in degrees $>1$ and finite-dimensional in each degree if
$d>3$.
However, the $L_\infty$-algebras $\CG_d(n)$ admit filtrations in the sense
of Subsection~\ref{subsection:filtered} for all $d$, and hence our methods are applicable also in the cases $d=2$ and $d=3$. We refer the reader to the forthcoming \cite{AS2} for details.
\end{remark}

\begin{remark}

Applying Theorem~\ref{main theorem} to the flat connections $\SW_d(n)$ yields holonomy maps
$$\holinfty_{\SW_d(n)}: C_\bullet(\FM_d(n)) \to \hat{\B} \Uinfty(\CG_d(n)) \cong \hat{\B} \Omega \CE(\CG_d(n)).$$
The composition of $\holinfty_{\SW_d(n)}$ with the projection to $\CE(\CG_d(n))$ (which is a chain map but not a morphism of coalgebras)
are Kontsevich's formality maps
$$C_\bullet(\FM_d(n)) \to \CE(\CG_d(n))$$
from \cite{K}. Kontsevich proved that these maps are quasi-isomorphisms and that they assemble into a morphism of operads
from $(C_\bullet(\FM_d(n)))_{n\ge 1}$ to $(\CE(\CG_d(n)))_{n\ge 1}$, respectively. It is not hard to verify that the latter operad
of differential graded coalgebras is quasi-isomorphic to its cohomology, which can be identified with the homology operad of the
compactified configuration spaces $(\FM_d(n))_{n\ge 1}$.
This way, Kontsevich established the formality of the chains on the little $d$-disks operad.

The holonomy maps $$\holinfty_{\SW_d(n)}: C_\bullet(\FM_d(n)) \to \hat{\B} \Uinfty(\CG_d(n)) \cong \hat{\B} \Omega \CE(\CG_d(n))$$ that we constructed are extensions of Kontsevich's formality map to a collection of quasi-isomorphisms of differential graded coalgebras.
Therefore, it should be possible to use them to obtain a formality proof that is compatible with the comultiplication
on chains. We hope to report on this in the forthcoming \cite{AS2}.
\end{remark}

\subsection{Drinfeld--Kohno construction in higher dimensions}

If $\g$ is a complex semisimple Lie algebra and $V$ a representation of $\g$, then the braid group
$B_n$ acts on $V^{\otimes n}$. This action comes from the following construction due to Drinfeld and Kohno:
 For each $n\geq 2$ there is a Lie algebra $\td_2(n)$, called the
 Drinfeld--Kohno Lie algebra, and natural flat connections on the configuration spaces $\Conf_2(n)$ with values in $\td_2(n)$, the Knizhnik--Zamolodchikov connections \cite{KZ}.
The Lie algebras $\td_d(n)$ have the property that for any quadratic Lie algebra $\g$ and any representation $V$ of $\g$, there is a morphism of Lie algebras:
$\varphi_n\colon  \td_d(n)\rightarrow \End(V^{\otimes n})$.
Pushing the flat connections along the morphism $\varphi_n$, one obtains flat connections on the vector bundle $V^{\otimes n}$. The holonomy of the flat connection gives an action of the fundamental group of $\Conf_d(n)$, which is the pure braid group $P_n$. Since the connection is compatible with the action of the symmetric group, these actions extend to an action of the braid group $B_n$.
We now explain how this construction can be generalized to higher dimensions. Our aim is to show how the compactified configuration spaces $\FM_d(n)$ act via higher holonomies on the category of representations of quadratic graded Lie algebras.

\begin{definition}
For each dimension $d\geq 2$ and each $n \geq 2$, the Drinfeld--Kohno Lie algebra is the graded Lie algebra $\td_d(n)$ generated by the symbols
$t_{ij}=(-1)^dt_{ji}$  for $1\leq i, j \leq n, i\neq j$, of degree $2-d$,
modulo the relations
\begin{eqnarray*}
{[t_{ij}, t_{kl}]} = 0 \quad & \textrm{if} & \quad  \# \{i,j,k,l\}=4,\\
{[t_{ij}, t_{ik} + t_{jk}]}=0  \quad & \textrm{if} & \quad  \# \{i,j,k\}=3.
\end{eqnarray*}

\end{definition}

These graded Lie algebras $\td_d(n)$ are closely related to the
$L_\infty$-algebras of internally connected graphs $\CG_d(n)$, which
were defined in the previous subsection. In fact, $\td_d(n)$ is just the cohomology of $\CG_d(n)$:

\begin{proposition}[Proposition 6 from \cite{W}]\label{cohomologyCG}
The map $\phi\colon  \td_d(n) \rightarrow H(\CG_d(n)),$\linebreak
defined by sending $t_{ij}$ to the cohomology class of the graph that has only one edge going from the $i$th to the $j$th external vertices, is an isomorphism of graded Lie algebras.
\end{proposition}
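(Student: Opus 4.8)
The plan is to verify successively that $\phi$ is well defined on the generators $t_{ij}$, that it respects the defining relations of $\td_d(n)$, and that the resulting morphism of graded Lie algebras is bijective. Let $e_{ij}$ denote the internally connected graph on $n$ external vertices that has no internal vertices and a single edge joining the $i$th and $j$th external vertex; its degree is $\overline{e_{ij}} = 1 + d\cdot 0 - (d-1)\cdot 1 = 2-d$, in agreement with the degree of $t_{ij}$. Since the differential of $^*\Graphs_d(n)$ is the sum over edge contractions, and an edge between two external vertices cannot be contracted, the $L_\infty$-structure maps $\mu_k$ of $\CG_d(n)$ --- obtained from this differential by the recipe of Subsection~\ref{subsection:rational_homotopy_theory} --- satisfy $\mu_1(e_{ij}) = 0$. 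Hence $e_{ij}$ is a cocycle and $[e_{ij}] \in H(\CG_d(n))$ is defined, and the graph equivalence relation gives $e_{ij} = (-1)^d e_{ji}$ already at the chain level, so $\phi$ is compatible with $t_{ij} = (-1)^d t_{ji}$.

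Next I would show that the classes $[e_{ij}]$ satisfy the infinitesimal braid relations, so that $\phi$ extends to a morphism of graded Lie algebras with the bracket on $H(\CG_d(n))$ induced by $\mu_2$. The point is that $\mu_2$ is the part of the differential of $^*\Graphs_d(n) = \wedge\CG_d(n)$ quadratic in the generators: for an internally connected graph $\Gamma$ it records the edge contractions of $\Gamma$ whose result is a product of exactly two internally connected graphs, which occurs precisely when one contracts an edge from an internal vertex to an external vertex and that internal vertex is a cut point of the internal part. A degree count places $\mu_2(e_{ij}, e_{kl})$ in the span of internally connected graphs with one internal vertex and three edges. When $\#\{i,j,k,l\} = 4$ no such graph attaches to these four external vertices, so $\mu_2(e_{ij},e_{kl}) = 0$ and $\phi([t_{ij},t_{kl}]) = 0$. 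When $\#\{i,j,k\} = 3$ the relevant graph is the ``tripod'' $Y_{ijk}$, one internal vertex joined to $i,j,k$; one checks that $Y_{ijk}$ is itself a cocycle (its differential is purely quadratic), that $\mu_2(e_{ij},e_{ik})$, $\mu_2(e_{ij},e_{jk})$ and $\mu_2(e_{ik},e_{jk})$ all equal $\pm Y_{ijk}$ up to $\mu_1$-exact terms, and that the signs conspire so that $\mu_2(e_{ij}, e_{ik}+e_{jk}) = 0$, giving $\phi([t_{ij}, t_{ik}+t_{jk}]) = 0$. The sign bookkeeping is the only delicate point of this step; the rest is combinatorics of edge contraction.

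Finally, for bijectivity I would pass through rational homotopy theory. By Theorem~\ref{proposition_above} and the discussion of Subsection~\ref{subsection:rational_homotopy_theory}, $^*\Graphs_d(n)$ is a Chevalley--Eilenberg model of $\CG_d(n)$ whose cohomology is the Arnold--Cohen algebra $A = H^*(\Conf_d(n))$, the quadratic algebra on generators $\omega_{ij}$ of degree $d-1$ dual to the $e_{ij}$. The algebra $A$ is known to be Koszul, so the universal enveloping algebra of the homotopy Lie algebra $H(\CG_d(n))$ is the Yoneda algebra $\mathrm{Ext}_A(\mathbb{R},\mathbb{R}) = A^{!}$, the quadratic dual of $A$, and $H(\CG_d(n))$ is generated by the weight-one classes $[e_{ij}]$. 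A comparison of presentations identifies $A^{!}$ with $\U(\td_d(n))$: the relations $\omega_{ij}\omega_{jk} + \omega_{jk}\omega_{ki} + \omega_{ki}\omega_{ij} = 0$ --- together with $\omega_{ij}^2 = 0$ when $d$ is odd --- are exactly the orthogonal complement of the infinitesimal braid relations, while the monomials $\omega_{ij}\omega_{kl}$ with disjoint index sets are unconstrained, forcing $[t_{ij},t_{kl}] = 0$ in the dual. Tracking all identifications, $\phi$ becomes the resulting isomorphism $\td_d(n) \cong H(\CG_d(n))$. Equivalently, and without describing $A^{!}$ explicitly, one checks that $\phi$ is a morphism of quadratic graded Lie algebras that is an isomorphism in weight $\leq 1$ and surjective, and then deduces bijectivity in every weight from the Poincar\'e series identity $\mathrm{Hilb}_A(t)\cdot \mathrm{Hilb}_{\U(\td_d(n))}(-t) = 1$, which holds by Koszulness.

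The step I expect to be the main obstacle is this last one: it rests on knowing (or proving) that $H^*(\Conf_d(n))$ is Koszul and that $\FM_d(n)$ is formal, and on the degree bookkeeping showing that $H(\CG_d(n))$ is concentrated on the diagonal (homological degree $\ell(2-d)$ in Koszul weight $\ell$), which is what upgrades ``$\td_d(n)$ maps to $H(\CG_d(n))$'' to ``$\td_d(n)$ \emph{is} $H(\CG_d(n))$''. For $d = 2$ this interacts with the technical caveats noted after Theorem~\ref{proposition_above} ($^*\Graphs_2(n)$ is not non-negatively graded and the integration map takes values in piecewise semialgebraic forms), and one instead invokes Kohno's identification of the holonomy Lie algebra of $\Conf_2(n)$ with $\td_2(n)$.
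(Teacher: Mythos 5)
First, a point of comparison: the paper does not prove this statement at all --- it is imported verbatim as Proposition~6 of Willwacher \cite{W}, where it is established by a direct analysis of the graph complex. So your proposal cannot be matched against an argument in the text; judged on its own, its first two steps are essentially right and its third step is a legitimate but incomplete strategy. The verification that $e_{ij}$ is a $\mu_1$-cocycle of the correct degree, and that $\mu_2$ is dual to the quadratic part of the graph differential so that $\mu_2(e_{ij},e_{kl})=0$ for $\#\{i,j,k,l\}=4$ and $\mu_2(e_{ij},e_{ik}+e_{jk})$ is a multiple of the tripod, is sound. Note, however, that the deferred ``sign bookkeeping'' is not cosmetic: a valence count shows that no admissible graph maps to the tripod under $\mu_1$, so there are no exact terms available and the infinitesimal braid relation must hold \emph{on the nose}; the orientation conventions therefore have to be carried out, and this is exactly where the dual Arnold relation $dY_{ijk}=\pm(e_{ij}e_{jk}+e_{jk}e_{ki}+e_{ki}e_{ij})$ enters.

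The genuine gap is in the bijectivity step. Your argument needs three external inputs: Koszulness of the Arnold--Cohen algebra $H^*(\Conf_d(n))$, formality of $\FM_d(n)$ (Theorem~\ref{proposition_above}), and --- crucially --- the identification of $H(\CG_d(n),\mu_1)$ with the homotopy Lie algebra, i.e.\ with the quadratic dual of $H^*(\Conf_d(n))$. That last identification uses quasi-isomorphism invariance of the dual $L_\infty$-algebra, which requires $^*\Graphs_d(n)$ to be a Sullivan (cofibrant) model; this is available for $d>2$ (with the finiteness caveats the paper itself notes for $d=3$), but fails for $d=2$, where the generators sit in non-positive degrees and quasi-freeness no longer guarantees invariance of $H(\CG_2(n))$. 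Your fallback ``invoke Kohno'' does not close this: Kohno identifies the Malcev Lie algebra of the pure braid group with $\td_2(n)$, but the proposition asserts that the \emph{entire graded} cohomology $H(\CG_2(n))$ --- not just a degree-zero piece attached to $\pi_1$ --- is $\td_2(n)$, and proving the vanishing outside that piece is precisely the nontrivial content of Willwacher's computation. So the sketch is a reasonable alternative route for $d>2$ provided Koszulness is cited or proved (e.g.\ via a quadratic Gr\"obner/PBW basis for the Arnold--Cohen relations), but in low dimensions, and in particular in the case $d=2$ relevant to \v{S}evera--Willwacher, it does not yet constitute a proof.
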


The relation between $\td_d(n)$ and $\CG_d(n)$ is even stronger \cite{AS2}:

\begin{proposition}[\cite{AS2}]
The $L_\infty$-algebras $\CG_d(n)$ are formal; i.e.,\ there is an $L_\infty$ quasi-isomorphism between $\CG_d(n)$ and its
cohomology $\td_d(n)$.
\end{proposition}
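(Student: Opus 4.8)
The plan is to deduce the $L_\infty$-formality of $\CG_d(n)$ from the (rational) formality of the configuration space $\FM_d(n)$, via the dictionary between flat connections and Sullivan models set up in Subsection~\ref{subsection:rational_homotopy_theory}. Recall from there that the Chevalley--Eilenberg cochain algebra $\CE^*(\CG_d(n))$ of the $L_\infty$-algebra $\CG_d(n)$ is precisely Kontsevich's graph cdga ${}^*\Graphs_d(n)$: this is the content of the observation that ${}^*\Graphs_d(n)$ is free as a graded commutative algebra on the internally connected graphs and that the associated $L_\infty$-algebra has underlying space $\CG_d(n)$, the differential on ${}^*\Graphs_d(n)$ being dual to the $L_\infty$-brackets. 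The assignment $\g \mapsto \CE^*(\g)$ sends $L_\infty$-quasi-isomorphisms to cdga quasi-isomorphisms, and, on the relevant (pro-nilpotent, filtered) subcategories, a cdga quasi-isomorphism $\CE^*(\g) \to \CE^*(\mathfrak{h})$ arises from an $L_\infty$-quasi-isomorphism $\mathfrak{h} \to \g$. Hence $\CG_d(n)$ is formal as an $L_\infty$-algebra as soon as (i) ${}^*\Graphs_d(n)$ is a formal cdga, and (ii) the $L_\infty$-algebra attached to its cohomology algebra is $\td_d(n)$ with vanishing higher brackets.

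For (i): by Theorem~\ref{proposition_above} the integration map $I\colon {}^*\Graphs_d(n) \to \Omega(\FM_d(n))$ is a quasi-isomorphism, so ${}^*\Graphs_d(n)$ is a model for $\FM_d(n) \simeq \Conf_d(n)$. Kontsevich's formality theorem for configuration spaces \cite{K}, carried out in detail in \cite{Lambrechts}, provides a zig-zag of cdga quasi-isomorphisms connecting ${}^*\Graphs_d(n)$ to its cohomology $H^*(\Conf_d(n))$, the graded commutative algebra on the $\omega_{ij}$ of degree $d-1$ subject to the Arnold--Cohen relations. For (ii): this Arnold--Cohen algebra is quadratic, and its quadratic-dual Lie algebra is, by inspection of generators and relations, exactly the Drinfeld--Kohno Lie algebra $\td_d(n)$; this is the statement dual to Proposition~\ref{cohomologyCG}. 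Moreover the Arnold--Cohen algebra is Koszul (Kohno's theorem in dimension $d=2$, and its analogue for general $d$; see \cite{W,AS2}). Koszulness says precisely that $\CE^*(\td_d(n))$ is itself formal, with cohomology $H^*(\Conf_d(n))$; equivalently, the minimal $L_\infty$-model of $H^*(\Conf_d(n))$ carries no operations beyond the Lie bracket of $\td_d(n)$. Combining (i), (ii), and Proposition~\ref{cohomologyCG} (which identifies $H(\CG_d(n))$ with $\td_d(n)$) produces the desired $L_\infty$-quasi-isomorphism $\CG_d(n) \xrightarrow{\sim} \td_d(n)$.

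The main obstacle is not any single deep step --- Kontsevich's formality is used as a black box --- but the bookkeeping needed to make the cdga/$L_\infty$ translation rigorous for $d = 2, 3$, where ${}^*\Graphs_d(n)$ fails to be of finite type and ${}^*\Graphs_2(n)$ is not positively graded, so one must work throughout with the filtered, complete framework of Subsection~\ref{subsection:filtered} rather than with genuine finite-type Sullivan models. An alternative, more self-contained route is to apply the homotopy transfer theorem to the quasi-isomorphism $\CG_d(n) \to H(\CG_d(n)) = \td_d(n)$, obtaining a transferred $L_\infty$-structure $\{m_k\}_{k \ge 2}$ on $\td_d(n)$ with $m_2$ the Lie bracket, and then to remove the higher operations $m_k$ ($k \ge 3$) by an inductive gauge argument: the obstructions live in the Chevalley--Eilenberg deformation cohomology of $\td_d(n)$ and vanish for weight reasons, since the graph complex carries an extra grading (by the number of internal vertices) with respect to which each $m_k$ is homogeneous of a weight not realised by the relevant cohomology classes. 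This ``purity'' mechanism is in the spirit of \cite{K,W} and is the argument we expect to use in \cite{AS2}; setting up the weight grading compatibly with the transferred structure and verifying the cohomology vanishing is the technical heart of that approach.
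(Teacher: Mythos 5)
The paper itself gives no proof of this proposition: it is quoted from the forthcoming reference \cite{AS2}, so there is no argument of the authors to compare yours against, and I can only assess your proposal on its own terms.

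Your second, transfer-based route is the one that can be made into a complete argument, but its technical core is misstated. The grading ``by the number of internal vertices'' is not the right auxiliary grading: edge contraction lowers the number of internal vertices by one, so this grading is not even preserved by the differential of $\CG_d(n)$, and moreover \emph{every} structure map $l_j$ shifts it by exactly $+1$ (a graph contributing to $l_j(\Gamma_1,\dots,\Gamma_j)$ has $m(\Gamma_1)+\dots+m(\Gamma_j)+1$ internal vertices), so homogeneity with respect to it cannot separate the binary bracket from the higher operations. The grading that does the job is the weight $w$ given by the number of edges minus the number of internal vertices: each term of the differential removes one edge and one internal vertex, so $w$ is preserved by the differential and by all the brackets $l_j$. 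By Proposition~\ref{cohomologyCG}, the class of $t_{ij}$ has weight $1$, hence the weight-$w$ part of $H(\CG_d(n))\cong \td_d(n)$ is spanned by length-$w$ Lie words in the $t_{ij}$ and is concentrated in the single degree $w(2-d)$. Choosing the transfer data weight-homogeneously (possible, since the complex splits as a direct sum over weights), each transferred operation $m_k$ preserves weight and has degree $2-k$, so this purity forces $m_k=0$ for all $k\neq 2$ on the nose; no inductive gauge argument or obstruction calculus is needed. With this correction, your transfer argument is a complete and standard proof, taking Willwacher's computation of $H(\CG_d(n))$ as input.

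Your first route (formality of the cdga ${}^*\Graphs_d(n)$ plus Koszulity of the Arnold--Cohen algebra) is plausible but, as you yourself note, leaves its hardest step open: upgrading a zig-zag of cdga quasi-isomorphisms between Chevalley--Eilenberg-type algebras to an $\Linfty$ quasi-isomorphism $\CG_d(n)\to\td_d(n)$ is delicate here, since the algebras involved are of infinite type, not positively graded when $d=2$, and the $L_\infty$-algebras are only filtered/pro-nilpotent; and the identification of the minimal model of $H^\bullet(\Conf_d(n))$ via Koszulity is ultimately the same purity mechanism in disguise. As written, that route is a strategy rather than a proof, whereas the corrected weight/purity transfer argument closes the gap.
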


\begin{remark}

Because the $L_\infty$-algebras $\CG_d(n)$ are formal, one can use
homological perturbation theory to push forward the \v{S}evera--Willwacher
connections $\SW_d(n)$ to flat connection $\widehat{\SW}_d(n)$ with values in the graded Lie algebras $\td_d(n)$.
These induced connections are unique up to gauge equivalence. 
\v{S}evera and Willwacher showed in \cite{SW} that in two dimensions one
recovers the Alsekseev--Torossian connections, which were introduced in \cite{AT}.
\end{remark}

We now show that the graded Lie algebras $\td_d(n)$ naturally act on representations of (a graded version of) quadratic Lie algebras.

\begin{definition}
A quadratic differential graded Lie algebra of degree $D$ is a finite-dimensional differential graded Lie algebra $\g$ together
with a non-degenerate graded symmetric bilinear form
$ \kappa\colon  \g \otimes \g  \rightarrow \mathbb{R}[D],$
satisfying 
 \[ \kappa ([\alpha, \beta], \gamma)= -(-1)^{|\alpha||\beta|} \kappa(\beta, [\alpha,\gamma]) \quad \textrm{and} \quad
\kappa (d \alpha, \beta)= (-1)^{|\alpha|} \kappa(\alpha,d\beta).\]
 
\end{definition}

\begin{example}
 \hspace{0cm}
\begin{enumerate}
 \item A complex semisimple Lie algebra $\g$ endowed with the Killing form is a quadratic differential graded Lie algebra of degree $0$.
\item Let $\g$ be a quadratic Lie algebra and $M$ a closed oriented manifold of dimension $D$. Let $H^-(M)$ denote the graded algebra $ H^-(M)^k:= H^{-k}(M),$
and consider the pairing 
\[ \mu\colon  H^-(M) \otimes H^-(M)\rightarrow \mathbb{R}[D],\]
induced by the Poincare pairing in cohomology. Then the vector space $\g \otimes H^-(M)$ is a quadratic differential graded Lie algebra with bracket
\[ [\alpha \otimes \eta , \beta \otimes \omega]:= (-1)^{|\eta||\beta|}[\alpha, \beta]\otimes \eta \omega\]
and bilinear pairing
$\kappa(\alpha \otimes \eta , \beta \otimes \omega):= (-1)^{|\eta||\beta|} \kappa(\alpha,\beta)\mu(\eta, \omega). $

\end{enumerate}
\end{example}

Let us now fix a quadratic differential graded Lie algebra $\g$ of degree $D$.
We denote by $\U(\g)$ the universal enveloping algebra of $\g$. The bilinear form $\kappa$ defines an 
isomorphism
$\kappa^{\sharp}\colon \g \rightarrow \g^*[D]$,
which induces identifications
$\g \otimes \g[-D] \cong \g \otimes \g^* \cong \End(\g)$.

We will denote by $\Omega$ the element of $(\g \otimes \g) ^{-D}\subset \U
\otimes \U$ that corresponds to $\id \in \End(\g)$ under the identification above. Explicitly, one can choose a basis $(I_\mu)$ for $\g$, with the property that each of the 
basis elements is homogeneous and the basis of  $ \g^*[D] $ induced by the isomorphism $\kappa^\sharp$ is dual to the basis $(I_\mu)$. Then
$\Omega$ can be written as $\Omega= \sum_\mu I_\mu \otimes \tilde{I}_\mu,$
where $\tilde{I}_\mu$ is the unique basis element in $\g ^{|I_\mu|-D}$ with the property that $\kappa(I_\mu, \tilde{I}_\mu)=1$.
In case $D=4l$, there is a potential problem since the bilinear form restricted to $g^{\frac{D}{2}}$ may not be positive definite. In this case, some of the elements $\tilde{I}_\mu$ may not be basis elements but negative of basis elements instead.

The Casimir element of $\g$, denoted by $C$, is the image of $\Omega\in \g \otimes \g $ in the universal enveloping algebra.  Since the bilinear form $\kappa$ is $\mathsf{ad}$ invariant i.e.,\ 
\[\kappa(\mathsf{ad}(x)(y),z)+(-1)^{|x||y|} \kappa(y, \mathsf{ad}(x)(z))=0,\]
the map $\kappa^\sharp\colon  \g \rightarrow \g^*[D]$ is a morphism of representations of $\g$. Since $\id \in \End(\g)$ is an invariant element for the action of $\g$, so is $\Omega$. We conclude that $C$ is a central element of $\U(\g)$.
Also, the compatibility between the differential and the pairing in $\g$
implies that  $\kappa^\sharp\colon  \g \rightarrow \g^*[D]$ is a morphism of chain complexes. Since $\id \in \End(\g)$ is closed, we conclude that $d \Omega=0$.

Recall that $\U(\g)$ admits a coproduct $\Delta\colon  \U(\g) \rightarrow \U(\g) \otimes \U(\g)$,
which is the unique algebra homomorphism with the property that $\Delta(x)= 1 \otimes x + x \otimes 1$
for all $x\in \g$. 

The proof of the following lemma is immediate.
\begin{lemma}\label{lemmaomega}
We regard $\g$ as a subspace of $\U(\g)$ via the obvious inclusion. Then
\[\Omega=\frac{1}{2}(\Delta(C)-1 \otimes C - C\otimes 1).\]
\end{lemma}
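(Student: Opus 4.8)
The plan is to compute both sides of the claimed identity directly, using the definition of the coproduct $\Delta$ on $\U(\g)$ as the unique algebra homomorphism with $\Delta(x) = 1\otimes x + x\otimes 1$ for $x\in \g$, together with the expression $\Omega = \sum_\mu I_\mu \otimes \tilde I_\mu$ and $C = \sum_\mu I_\mu \tilde I_\mu \in \U(\g)$ in terms of a homogeneous basis $(I_\mu)$ of $\g$.

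\medskip

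First I would apply $\Delta$ to $C$. Since $\Delta$ is an algebra homomorphism and $C = \sum_\mu I_\mu \tilde I_\mu$ is a product of elements of $\g$, we get
\[
\Delta(C) = \sum_\mu \Delta(I_\mu)\Delta(\tilde I_\mu) = \sum_\mu (1\otimes I_\mu + I_\mu \otimes 1)(1\otimes \tilde I_\mu + \tilde I_\mu \otimes 1).
\]
Expanding the product in $\U(\g)\otimes \U(\g)$ — remembering the Koszul sign, so that $(I_\mu\otimes 1)(1\otimes \tilde I_\mu) = (-1)^{|I_\mu||\tilde I_\mu|} I_\mu\otimes \tilde I_\mu$ — yields four terms:
\[
\Delta(C) = 1\otimes C + C\otimes 1 + \sum_\mu I_\mu\otimes \tilde I_\mu + \sum_\mu (-1)^{|I_\mu||\tilde I_\mu|}\,\tilde I_\mu \otimes I_\mu.
\]
The first two terms are $1\otimes C + C\otimes 1$. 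The third term is exactly $\Omega$. So the whole claim reduces to showing that the fourth term also equals $\Omega$, i.e.\ that $\Omega$ is graded symmetric: $\sum_\mu (-1)^{|I_\mu||\tilde I_\mu|}\,\tilde I_\mu\otimes I_\mu = \sum_\mu I_\mu\otimes \tilde I_\mu$.

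\medskip

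**The main point** is therefore the symmetry of $\Omega$, which follows from the fact that $\Omega$ corresponds to $\id\in\End(\g)$ under $\g\otimes\g[-D]\cong \End(\g)$ and that $\kappa$ is \emph{graded symmetric}. Concretely, re-indexing the sum $\sum_\mu (-1)^{|I_\mu||\tilde I_\mu|}\tilde I_\mu \otimes I_\mu$ by the dual basis and using $\kappa(I_\mu,\tilde I_\mu)=1$ together with $\kappa(x,y) = (-1)^{|x||y|}\kappa(y,x)$ shows that $\{(-1)^{|I_\mu||\tilde I_\mu|}\tilde I_\mu\}$ is, up to the ordering of the index set, the same collection as $\{I_\mu\}$ paired with $\{\tilde I_\mu\}$; hence the two sums over $\mu$ agree. (One must be mildly careful about the sign subtlety flagged in the text when $D=4l$ and $\kappa$ is indefinite on $\g^{D/2}$, but since both $\Omega$ and the fourth term are built from the same element $\id\in\End(\g)$ via the same identification, this introduces no genuine discrepancy.) Combining, $\Delta(C) = 1\otimes C + C\otimes 1 + 2\Omega$, which rearranges to $\Omega = \frac12(\Delta(C) - 1\otimes C - C\otimes 1)$, as claimed. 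I expect the only real subtlety to be bookkeeping the Koszul signs in the expansion of the product in $\U(\g)\otimes\U(\g)$ and checking the graded symmetry of $\Omega$; everything else is formal.
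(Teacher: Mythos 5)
Your computation is correct and is exactly the ``immediate'' expansion the paper intends (the paper omits the proof, and the identity $\Delta(C)=1\otimes C+C\otimes 1+2\sum_\mu I_\mu\otimes\tilde I_\mu$ is used verbatim in its proof of the subsequent lemma); the graded symmetry of $\Omega$ that you invoke does hold, also in the indefinite case $D=4l$, since the signs relating $\tilde I_\mu$ to the basis elements compensate the Koszul signs. One small slip: the Koszul sign sits on the cross term $(1\otimes I_\mu)(\tilde I_\mu\otimes 1)=(-1)^{|I_\mu||\tilde I_\mu|}\,\tilde I_\mu\otimes I_\mu$, not on $(I_\mu\otimes 1)(1\otimes\tilde I_\mu)=I_\mu\otimes\tilde I_\mu$ as your parenthetical asserts, but your displayed formula for $\Delta(C)$ is nonetheless the correct one.
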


Let $ \iota^{12}\colon  \U(\g) \otimes \U(\g) \rightarrow \U(\g) \otimes \U(\g)
\otimes \U(\g)$ be the map $x\otimes y \mapsto x \otimes y \otimes 1$,
and define $\iota^{23}, \iota^{13}$ analogously. Then, for $1\leq i <j
\leq3$, we set $\Omega^{ij}:= \iota^{ij} (\Omega)$.

\begin{lemma}\label{lemmadrinfeld}
The following relation is satisfied: $\quad [\Omega^{12},\Omega^{23}+ \Omega^{13}]=0$.
\end{lemma}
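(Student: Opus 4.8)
The plan is to reduce the statement to the defining relations of the Drinfeld--Kohno Lie algebra $\td_d(n)$ applied to $\U(\g)^{\otimes 3}$ via a morphism built from the Casimir. Concretely, I would first recall from Lemma~\ref{lemmaomega} that $\Omega = \frac{1}{2}\big(\Delta(C) - 1\otimes C - C\otimes 1\big)$ inside $\U(\g)\otimes \U(\g)$, so that $\Omega^{12}$, $\Omega^{13}$, $\Omega^{23}$ are all expressible through the (iterated) coproduct of the single central element $C\in \U(\g)$. The key structural facts I would invoke are: (i) $C$ is central in $\U(\g)$ (established above via $\ad$-invariance of $\kappa$), and (ii) $\Delta\colon \U(\g)\to \U(\g)\otimes\U(\g)$ is an algebra homomorphism, and similarly the two coassociative iterates $(\Delta\otimes\id)\circ\Delta = (\id\otimes\Delta)\circ\Delta\colon \U(\g)\to \U(\g)^{\otimes 3}$ are algebra homomorphisms.

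The main computation is then: since $C$ is central, $\Delta(C)$ is central in $\U(\g)\otimes\U(\g)$ (the image of a central element under an algebra map need not be central in general, but here one checks directly that $\Delta(C) = 1\otimes C + C\otimes 1 + 2\Omega$ commutes with everything of the form $x\otimes 1$ and $1\otimes y$ for $x,y\in\g$, because $C$ is central and because $\Omega$ is $\g$-invariant — indeed $\Omega$ corresponds to $\id\in\End(\g)$, an invariant element, as noted in the paragraph preceding Lemma~\ref{lemmaomega}). From this, $\Omega^{12}+\Omega^{13}+\Omega^{23}$, being $\frac12$ times $(\Delta\otimes\id)\Delta(C)$ minus central boundary terms, is central in $\U(\g)^{\otimes 3}$. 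More to the point, I would argue that $\Omega^{13}+\Omega^{23}$ equals $\frac12\big((\id\otimes\Delta)(\Delta(C)) - 1\otimes\Delta(C) - C\otimes 1\otimes 1\big)$ up to the obvious identifications, hence it is the image under $\id\otimes\Delta$ of a central element of $\U(\g)\otimes\U(\g)$, and therefore commutes with $\Omega^{12}$, which lives in the first two factors and is the image of the same central element $\Omega$ under $\iota^{12}$. Spelling this out: $[\Omega^{12}, \Omega^{13}+\Omega^{23}] = [\iota^{12}(\Omega),\ (\id\otimes\Delta)(\iota^{12}(\Omega))\text{-type terms}]$, and one checks the bracket vanishes termwise using that $\Omega$ is $\g\otimes\g$-invariant, i.e.\ $[\Omega, \Delta(x)]=0$ for all $x\in\g$, which expresses exactly that $\Omega\in(\g\otimes\g)^\g$.

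The cleanest route, which I would actually carry out, is: write $\Omega = \sum_\mu I_\mu\otimes\tilde I_\mu$ in a homogeneous basis as in the text, so $\Omega^{12} = \sum_\mu I_\mu\otimes\tilde I_\mu\otimes 1$ and $\Omega^{13}+\Omega^{23} = \sum_\nu (1\otimes 1 + \text{appropriate placements})$; then expand $[\Omega^{12},\Omega^{13}+\Omega^{23}]$ as a double sum over $\mu,\nu$ of terms $[I_\mu, I_\nu]\otimes(\cdots)\otimes(\cdots)$ in the first factor and compatible terms in the others, and recognize the total as $\sum_\mu I_\mu\cdot\big(\text{action of }I_\mu\text{ on }\Omega\text{ in factors }2,3\big) \pm \big(\text{action on factor }1\big)$; invariance of $\Omega$, i.e.\ $\sum_\mu([I_\mu,-]\otimes\tilde I_\mu + (-1)^{\cdots}I_\mu\otimes[\tilde I_\mu,-]) = 0$ applied suitably, forces cancellation. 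The main obstacle — and the only place real care is needed — is bookkeeping of Koszul signs, since all of $t_{ij}$, $\Omega$, and the basis elements $I_\mu$ are in odd or shifted degrees when $d$ is odd; I would handle this by working with the degree-$(2-d)$ shift explicitly and checking that the invariance identity for $\Omega$ carries exactly the signs needed to kill the commutator. A useful sanity check is that for $D=0$ and $\g$ an ordinary semisimple Lie algebra this reduces to the classical infinitesimal braid relation $[\Omega^{12},\Omega^{13}+\Omega^{23}]=0$ underlying the Knizhnik--Zamolodchikov equations.
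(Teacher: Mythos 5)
The step that fails is in your second paragraph: $\Delta(C)$ is \emph{not} central in $\U(\g)\otimes\U(\g)$, and $\Omega^{12}+\Omega^{13}+\Omega^{23}$ is not central in $\U(\g)^{\otimes 3}$. Invariance of $\Omega$ says $[\Omega,\Delta(x)]=[\Omega,\,x\otimes 1+1\otimes x]=0$ for $x\in\g$; it does not give $[\Omega,\,x\otimes 1]=0$, which is what your parenthetical ``commutes with everything of the form $x\otimes 1$ and $1\otimes y$'' would require. Already for $\mathfrak{sl}_2$ (the case $D=0$), with $\Omega$ proportional to $e\otimes f+f\otimes e+\tfrac12 h\otimes h$, one has $[e\otimes 1,\Omega]=h\otimes e-e\otimes h\neq 0$. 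Moreover your identity is off by a leg of the coproduct: $\tfrac12\bigl((\id\otimes\Delta)\Delta(C)-1\otimes\Delta(C)-C\otimes 1\otimes 1\bigr)=\Omega^{12}+\Omega^{13}$, whereas what you want is $(\Delta\otimes\id)(\Omega)=\sum_\mu\Delta(I_\mu)\otimes\tilde I_\mu=\Omega^{13}+\Omega^{23}$. So the ``push a central element through a coproduct leg'' argument, as stated, does not go through.

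Your fallback route---expanding $[\Omega^{12},\Omega^{13}+\Omega^{23}]$ in the basis $(I_\mu)$ and cancelling via the infinitesimal invariance of $\Omega$---is the classical infinitesimal braid argument and is viable, but you explicitly defer the graded Koszul-sign bookkeeping, which you yourself identify as the only delicate point; as written this is a plan rather than a proof. The paper's proof avoids those signs altogether: by Lemma~\ref{lemmaomega}, modulo elements that are manifestly central (namely $C\otimes 1\otimes 1$, $1\otimes C\otimes 1$, $1\otimes 1\otimes C$), one has $\Omega^{12}\equiv\tfrac12\,\iota^{12}(\Delta(C))$ and $\Omega^{13}+\Omega^{23}\equiv\sum_\mu\Delta(I_\mu)\otimes\tilde I_\mu$, so the bracket reduces to $\sum_\mu[\Delta(C),\Delta(I_\mu)]\otimes\tilde I_\mu$, which vanishes solely because $\Delta$ is an algebra morphism and $C$ is central---no invariance identity with signs is needed. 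If you correct your second paragraph by replacing ``central in $\U(\g)\otimes\U(\g)$'' with ``commutes with the image of $\Delta$'' and use $\Delta\otimes\id$ rather than $\id\otimes\Delta$, your argument collapses onto the paper's.
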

\begin{proof}
First, we observe that since $C$ is a central element in $\U(\g)$, $1 \otimes 1 \otimes C, 1 \otimes C \otimes 1, C \otimes 1 \otimes 1$ are central elements in $\U(\g) \otimes \U(\g) \otimes \U(\g)$. In view of Lemma~\ref{lemmaomega}, we know that for each pair $1 \leq i <j \leq 3$:
$ \Omega^{ij}=\frac{1}{2}\iota^{ij}(\Delta(C))+X^{ij},$
where $X^{ij}$ is central. Therefore, it suffices to prove that
\[[\iota^{12}(\Delta(C)),\iota^{23}(\Delta(C))+ \iota^{13}(\Delta(C))]=0.\]
In order to prove this, we compute
\begin{eqnarray*}
\iota^{23}(\Delta(C))&=&\iota^{23}(1 \otimes C + C \otimes 1 +2 \sum_\mu I_\mu \otimes \tilde{I}_\mu)\\
&=& 1\otimes 1 \otimes C +1\otimes C \otimes 1 +2 \sum_\mu1\otimes I_\mu \otimes \tilde{I}_\mu,
\end{eqnarray*}
and similarly
\[\iota^{13}(\Delta(C))=1\otimes 1 \otimes C + C\otimes 1 \otimes 1 +2 \sum_\mu I_\mu \otimes 1\otimes \tilde{I}_\mu.\]
Therefore, we obtain
\begin{eqnarray*}
\iota^{13}(\Delta(C))+\iota^{23}(\Delta(C))&=& 2  \sum_\mu \Delta(I_\mu)\otimes \tilde{I}_\mu + X,
\end{eqnarray*}
with $X$ central. Finally, we compute
\begin{eqnarray*}
\frac{1}{2}[\iota^{12}(\Delta(C)),\iota^{23}(\Delta(C))+ \iota^{13}(\Delta(C))]&=&[\Delta(C)\otimes 1,\sum_\mu \Delta(I_\mu)\otimes \tilde{I}_\mu]\\
&=&\sum_\mu [\Delta(C), \Delta (I_\mu )]\otimes \tilde{I}_\mu = 0.
\end{eqnarray*}
\end{proof}

\begin{lemma}\label{lemmakey}
Let $\mathfrak{g}$ be a quadratic differential graded Lie algebra of degree $D=d-2$.
For each $n \geq 2$ there is a homomorphism of  graded algebras \[\hat{\varphi}_n:
\U(\td_{d}(n)) \rightarrow \U(\g)^{\otimes n},\] given by the formula $t_{ij}\mapsto  \lambda^{ij}(\Omega)\in \U(\g)^{\otimes n}$,
where $ \lambda^{ij}\colon  \U(\g) \otimes \U(\g) \rightarrow \U^{\otimes n}(\g)$ is the morphism of algebras given by:
\[x \otimes y \mapsto 1 \otimes \dots \otimes 1 \otimes \underbrace{x}_i \otimes 1 \otimes \dots \otimes 1 \otimes \underbrace{ y}_j \otimes 1 \otimes \dots \otimes 1.\]
\end{lemma}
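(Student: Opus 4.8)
The plan is to exploit the presentation of $\td_d(n)$ by generators and relations. Since $\td_d(n)$ is the graded Lie algebra freely generated by the symbols $t_{ij}$ (of degree $2-d$) modulo the symmetry relation $t_{ij}=(-1)^dt_{ji}$ and the two infinitesimal braid relations, its universal enveloping algebra $\U(\td_d(n))$ is the free associative algebra on the $t_{ij}$ modulo the two-sided ideal generated by the same relations, where the brackets are read as graded commutators. Thus, to produce $\hat\varphi_n$ as a homomorphism of graded algebras it suffices to check that (a) $\lambda^{ij}(\Omega)$ has the correct degree, and (b) the images under $t_{ij}\mapsto\lambda^{ij}(\Omega)$ of all defining relations vanish in $\U(\g)^{\otimes n}$. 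Point (a) is immediate: $\Omega$ lies in $(\g\otimes\g)^{-D}$ with $D=d-2$, and each $\lambda^{ij}$ is a morphism of graded algebras, so $\lambda^{ij}(\Omega)$ has degree $-D=2-d$, matching the degree of $t_{ij}$.

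For the symmetry relation I would show $\lambda^{ji}(\Omega)=(-1)^d\lambda^{ij}(\Omega)$, which amounts to the graded symmetry $\tau(\Omega)=(-1)^D\Omega$ of the element $\Omega$ under the flip $\tau\colon\g\otimes\g\to\g\otimes\g$; note $(-1)^D=(-1)^{d-2}=(-1)^d$. Writing $\Omega=\sum_\mu I_\mu\otimes\tilde I_\mu$ with $\kappa(I_\mu,\tilde I_\mu)=1$ and $|I_\mu|+|\tilde I_\mu|=D$, this follows from the graded symmetry of $\kappa$ together with the characterisation of the dual basis; the verification is a routine Koszul-sign bookkeeping that I would carry out but not dwell on.

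The two braid relations are handled as follows. When $\#\{i,j,k,l\}=4$, the elements $\lambda^{ij}(\Omega)$ and $\lambda^{kl}(\Omega)$ are supported on the disjoint sets of tensor slots $\{i,j\}$ and $\{k,l\}$; since elements of $\U(\g)^{\otimes n}$ supported on disjoint slots graded-commute (the Koszul signs produced by $ab$ and $ba$ differ precisely by $(-1)^{|a||b|}$), their graded commutator vanishes. When $\#\{i,j,k\}=3$, say these indices occupy positions $p<q<r$, there is an evident algebra homomorphism $\mu_{pqr}\colon\U(\g)^{\otimes 3}\to\U(\g)^{\otimes n}$ inserting the three tensor factors into slots $p,q,r$ and the unit elsewhere; it preserves the grading, hence respects graded commutators, and $\lambda^{pq}(\Omega)=\mu_{pqr}(\Omega^{12})$, $\lambda^{pr}(\Omega)=\mu_{pqr}(\Omega^{13})$, $\lambda^{qr}(\Omega)=\mu_{pqr}(\Omega^{23})$. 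Using the already-established symmetry relation to reorder indices, every instance of $[t_{ij},t_{ik}+t_{jk}]$ becomes $\pm\mu_{pqr}$ applied to $[\Omega^{12},\Omega^{23}+\Omega^{13}]$ (up to permuting the three superscripts), which vanishes by Lemma~\ref{lemmadrinfeld}. Hence all defining relations map to zero, $\hat\varphi_n$ is well defined, and it is a homomorphism of graded algebras by construction; moreover, using $d\Omega=0$ from the discussion above, $\hat\varphi_n$ is also a chain map once $\U(\td_d(n))$ is equipped with the zero differential.

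The main obstacle is not conceptual: Lemma~\ref{lemmadrinfeld} already disposes of the only substantial relation, and the disjoint-slots argument is immediate. The genuine care lies in the bookkeeping — getting the Koszul signs right in the graded symmetry of $\Omega$ (the most error-prone step), and checking systematically that every configuration of the three-index relation, depending on which vertex is repeated and in which cyclic order the indices appear, can indeed be rewritten via $t_{ij}=(-1)^dt_{ji}$ as a signed image of the single relation $[\Omega^{12},\Omega^{23}+\Omega^{13}]=0$ under a slot-insertion homomorphism.
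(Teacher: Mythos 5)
Your proposal is correct and follows essentially the same route as the paper: verify that the images $\lambda^{ij}(\Omega)$ satisfy the defining relations of $\td_d(n)$, dispose of the four-index relation by the disjoint-slot (graded) commutation, reduce the three-index relation to the case $n=3$ and invoke Lemma~\ref{lemmadrinfeld}, and use $d\Omega=0$ for compatibility with the differential. You are in fact somewhat more thorough than the paper, which silently assumes the symmetry $\lambda^{ji}(\Omega)=(-1)^d\lambda^{ij}(\Omega)$ and the reduction of all index configurations to the single identity $[\Omega^{12},\Omega^{23}+\Omega^{13}]=0$.
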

\begin{proof}
We need to prove that  $\hat{\varphi}_n(t_{ij})$ satisfy the defining relations of $\td_d(n)$. It is clear from the definition that  $[\hat{\varphi}_n(t_{ij}), \hat{\varphi}_n(t_{kl})]=0$
if $ \#\{i,j,k,l\}=4$. It remains to prove that
$ [\hat{\varphi}_n(t_{ij}), \hat{\varphi}_n(t_{ik})+ \hat{\varphi}_n(t_{ik})]=0$
if $\#\{ i,j,k\}=3$. Clearly, it is enough to consider the case $n=3$. Thus, it suffices to prove that
$[\Omega^{12},\Omega^{23}+ \Omega^{13}]$
vanishes,
which is precisely the claim of  Lemma~\ref{lemmadrinfeld}.
Since $d\Omega=0$, we conclude that the map $\hat{\varphi}$ is a chain map.
\end{proof}

\begin{corollary}\label{corollaryrep}
Let $\g$ be a quadratic differential graded Lie algebra of degree $D=d-2$ and $V_1, \dots , V_n$ be representations of $\g$.
Then there is a natural homomorphism of graded Lie algebras:
$\varphi\colon  \td_d(n)\rightarrow \End(V_1 \otimes \dots \otimes V_n)$.
\end{corollary}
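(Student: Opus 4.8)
The plan is to deduce Corollary~\ref{corollaryrep} from Lemma~\ref{lemmakey} by the standard representation-theoretic mechanism: a graded algebra homomorphism into a tensor power of enveloping algebras becomes, after choosing representations, a homomorphism into the endomorphism algebra of the tensor product of those representations. First I would recall that each representation $V_i$ of the differential graded Lie algebra $\g$ is the same datum as a homomorphism of graded algebras $\U(\g)\to \End(V_i)$; tensoring these over $i=1,\dots,n$ yields a homomorphism of graded algebras
\[
\U(\g)^{\otimes n} \longrightarrow \End(V_1)\otimes\cdots\otimes \End(V_n) \hookrightarrow \End(V_1\otimes\cdots\otimes V_n),
\]
where the last inclusion is the canonical one (which is an isomorphism when the $V_i$ are finite-dimensional, but we only need it to be an algebra map). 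Composing with $\hat\varphi_n\colon \U(\td_d(n))\to \U(\g)^{\otimes n}$ from Lemma~\ref{lemmakey} produces a homomorphism of graded algebras $\U(\td_d(n))\to \End(V_1\otimes\cdots\otimes V_n)$.

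Next I would restrict this homomorphism along the canonical inclusion $\td_d(n)\hookrightarrow \U(\td_d(n))$ to obtain a map $\varphi\colon \td_d(n)\to \End(V_1\otimes\cdots\otimes V_n)$. Since the target of an algebra homomorphism is taken with its commutator bracket (the functor $\Sigma$ of the excerpt), and since the inclusion of a graded Lie algebra into its enveloping algebra is a morphism of graded Lie algebras, the composite $\varphi$ is automatically a morphism of graded Lie algebras. Concretely, $\varphi(t_{ij})$ is the operator on $V_1\otimes\cdots\otimes V_n$ acting as the Casimir-type element $\Omega=\sum_\mu I_\mu\otimes\tilde I_\mu$ in the $i$-th and $j$-th tensor slots and as the identity elsewhere; this is well-defined and of degree $2-d = -D$ on the graded vector space $V_1\otimes\cdots\otimes V_n$, matching the degree of $t_{ij}$, because $\Omega\in(\g\otimes\g)^{-D}$.

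Naturality is then immediate: a morphism of representations $V_i\to V_i'$ intertwines the two actions of $\U(\g)$, hence the construction is compatible with morphisms in each variable, and likewise it is compatible with the differentials since $d\Omega=0$ was already used in Lemma~\ref{lemmakey} to see that $\hat\varphi_n$ is a chain map. I do not anticipate a serious obstacle here; the only point requiring a moment of care is bookkeeping the Koszul signs in the identification $\End(V_1)\otimes\cdots\otimes\End(V_n)\hookrightarrow \End(V_1\otimes\cdots\otimes V_n)$ of graded algebras, so that the image of $t_{ij}$ genuinely satisfies the infinitesimal braid relations; but these relations have already been verified at the level of $\U(\g)^{\otimes n}$ in Lemma~\ref{lemmadrinfeld} and Lemma~\ref{lemmakey}, and applying an algebra homomorphism preserves them, so nothing new needs to be checked.
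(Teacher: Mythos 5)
Your proposal is correct and follows essentially the same route as the paper: compose $\hat\varphi_n$ with the tensor product of the representations to get a graded algebra map $\U(\td_d(n))\to\End(V_1\otimes\cdots\otimes V_n)$, and then pass to Lie algebras (your restriction along $\td_d(n)\hookrightarrow\U(\td_d(n))$ is exactly the paper's appeal to the universal property of the enveloping algebra). The extra remarks on degrees, Koszul signs, and naturality are consistent with, though not spelled out in, the paper's argument.
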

\begin{proof}
Consider the composition
\[ \U(\td_d(n)) \rightarrow \U(\g)^{\otimes n} \rightarrow \End(V_1 ) \otimes \dots \otimes \End(V_n) \cong \End(V_1\otimes \dots \otimes V_n),\]
where the first map is $\hat{\varphi}_n$ and the second map is the tensor product of the representations.
This is an algebra map that, by the universal property of the enveloping
algebra, corresponds to a morphism of Lie algebras $\varphi\colon  \td_d(n)\rightarrow \End(V_1 \otimes \dots \otimes V_n)$.
\end{proof}

Let $\g$ be a quadratic differential graded Lie algebra of degree $D=d-2$
and $V_1, \dots , V_n$ be finite-dimensional representations of $\g$. By Corollary~\ref{corollaryrep}, there is a morphism of Lie algebras:
\[ \varphi_n\colon  \td_d(n)\rightarrow \End(V_1\otimes \dots \otimes V_n).\]

Recall that pushing forward the \v{S}evera--Willwacher connection $\SW_d(n)$ to cohomology 
results in a flat connection $\widehat{\SW}_d(n)$ on $\FM_d(n)$ with values in $\td_d(n)$.
Pushing forward further along the map $\varphi_n$ then yields a flat connection
$\varphi_n(\widehat{SW}_d(n))$ on the space $\FM_d(n)$ with values in  $\End(V_1\otimes \dots \otimes V_n)$.
Thus, in this way, one obtains flat connections on the trivial graded vector bundle with fiber $V_1 \otimes \dots \otimes V_n$. 
\begin{corollary}
The holonomies of the connections $\varphi_n(\widehat{SW}_d(n))$  give an action of the $\infty$-groupoid\footnote{We adopt the convention that an $\infty$-groupoid is a Kan simplicial set. The $\infty$-groupoid of a space $X$ is the Kan simplicial set of chains $C_\bullet(X)$.} of the space $\FM_d(n)$ on the vector space $V_1 \otimes \dots \otimes V_n$. 
\end{corollary}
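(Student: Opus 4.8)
The plan is to obtain the asserted action not by applying Theorem~\ref{main theorem} to $\End(W)$ directly, where $W:=V_1\otimes\dots\otimes V_n$, but by applying it to the Drinfeld--Kohno Lie algebra $\td_d(n)$ and then pushing the resulting holonomy morphism forward along the algebra homomorphism $\hat\varphi_n$ of Lemma~\ref{lemmakey}. First I would record that $\td_d(n)$ is a filtered graded Lie algebra in the sense of Section~\ref{subsection:filtered}: the defining relations are homogeneous with respect to bracket length, so $\td_d(n)$ is graded by bracket length, and since the free graded Lie algebra on the finitely many generators $t_{ij}$ is finite-dimensional in each bracket length, the lower central series of $\td_d(n)$ is an honest filtration with $\bigcap_k F_k(\td_d(n))=0$. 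Viewing $\widehat{\SW}_d(n)$ as a flat connection on $\FM_d(n)$ with values in this filtered dg Lie algebra, the dg Lie variant of the holonomy construction described right after Theorem~\ref{main theorem} produces a morphism of differential graded coalgebras
\[ \hol_{\widehat{\SW}_d(n)}\colon C_\bullet(\FM_d(n)) \longrightarrow \hat{\B}\hat{\U}(\td_d(n)). \]

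Next I would post-compose with $\hat{\B}$ applied to the chain of morphisms of differential graded algebras
\[ \hat{\U}(\td_d(n)) \longrightarrow \U(\g)^{\otimes n} \longrightarrow \End(V_1)\otimes\dots\otimes\End(V_n) \cong \End(W), \]
where the first map comes from $\hat\varphi_n$ of Lemma~\ref{lemmakey} and the second is the tensor product of the representation homomorphisms $\U(\g)\to\End(V_i)$. Lemma~\ref{lemmakey}, together with the Drinfeld-type identity of Lemma~\ref{lemmadrinfeld}, is exactly what makes $\hat\varphi_n$ well-defined on relations, and $d\Omega=0$ makes it a chain map, so all these arrows are morphisms of differential graded algebras and the composite $C_\bullet(\FM_d(n))\to\hat{\B}\End(W)$ is a morphism of differential graded coalgebras. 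Since $\End(W)$ acts tautologically on $W$, this composite is by definition a twisting cochain on $C_\bullet(\FM_d(n))$ with values in $\End(W)$ (Appendix~\ref{section:twisting_cochains}) --- equivalently, the $\infty$-local system ("representation up to homotopy") with fibre $W$ --- which is precisely what we mean by an action of the $\infty$-groupoid of $\FM_d(n)$ on $W$. To see that this action is the one carried by the connection $\varphi_n(\widehat{\SW}_d(n))$ itself, I would invoke the explicit iterated-integral formula for parallel transport: the holonomy of a connection built from the forms $\omega_{ij}$ with coefficients in $\td_d(n)$ depends on those coefficients only through their products in the enveloping algebra, so applying the Lie algebra morphism $\varphi_n$ to the coefficients commutes with forming the holonomy.

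The step I expect to be the main obstacle is the passage to completions. The finite-dimensional graded Lie algebra $\End(W)$ need not admit a filtration in the sense of Section~\ref{subsection:filtered} --- its lower central series need not terminate, its degree-zero part already containing non-nilpotent endomorphism algebras of the homogeneous pieces of $W$ --- so one cannot apply Theorem~\ref{main theorem} (or the naturality of the holonomy map) to $\varphi_n(\widehat{\SW}_d(n))$ directly, and must instead push the coalgebra morphism forward a posteriori and check compatibility of $\hat\varphi_n$ with the completion. For $d\ge 3$ this is harmless: each $\omega_{ij}$ has form-degree $d-1\ge 2$, so on any fixed simplex only finitely many iterated integrals contribute, $\U(\td_d(n))$ is finite-dimensional in each degree, the only surviving completion is the $\hat{\B}$ forced by the non-connectedness of $C_\bullet(\FM_d(n))$, and $\hat\varphi_n$ restricts to an honest algebra map $\U(\td_d(n))\to\U(\g)^{\otimes n}$. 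For $d=2$ the $\omega_{ij}$ have form-degree $1$, infinitely many iterated integrals contribute in each cochain degree, and one lands in the classical Knizhnik--Zamolodchikov (respectively Alekseev--Torossian) situation, where the holonomy must be interpreted in the completed enveloping algebra $\hat{\U}(\td_2(n))$ or over a formal deformation parameter, as in \cite{AS2}.
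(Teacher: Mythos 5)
Your route is viable but genuinely different in its order of operations from what the paper intends, and the difference matters in exactly the case you flag. The paper's (implicit) argument is: push the connection forward \emph{first}, so that $\varphi_n(\widehat{\SW}_d(n))$ is a flat $\mathbb{Z}$-graded connection on the trivial graded bundle with fibre $W=V_1\otimes\cdots\otimes V_n$, valued in the finite-dimensional algebra $\End(W)$; then its holonomies are the iterated-integral holonomies of flat graded connections from \cite{I,BS,AS}, i.e.\ a Maurer--Cartan element of $\End(W)\hat{\otimes}C^{\bullet}(\FM_d(n))$, equivalently a twisting cochain on $C_\bullet(\FM_d(n))$ with values in $\End(W)$, which is precisely the asserted action of the $\infty$-groupoid on $W$. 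In that order no filtration on $\End(W)$ is ever needed: convergence of the iterated integrals is analytic (compact simplices, finite-dimensional target), not formal, so the non-filterability of $\End(W)$ that you correctly point out is simply not an obstacle. Your order --- take the holonomy with values in $\hat{\U}(\td_d(n))$ first, then apply $\hat{\B}$ of $\hat{\varphi}_n$ and the representation maps --- buys a cleaner use of Theorem~\ref{main theorem} and of the filtration on $\td_d(n)$ (your verification that the lower central series is an honest filtration is fine), and for $d\ge 3$ your degree-counting argument does reduce everything to finite sums, so there the two routes agree and your proof is complete.

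The genuine gap is the case $d=2$, which the corollary includes and which is the paper's headline example (the braid-group representations). There $\hat{\varphi}_n$ does \emph{not} extend to the completion: it is not a filtered morphism into anything filtered, so a typical element of $\hat{\U}(\td_2(n))$ (an infinite series in bracket length) has no formally defined image in $\End(W)$, and ``$\hat{\B}\End(W)$'' is not even defined in the paper's framework since $\End(W)$ carries no filtration. Your closing sentence concedes this by proposing to reinterpret the $d=2$ holonomy in $\hat{\U}(\td_2(n))$ or over a formal parameter, but that changes the statement rather than proving it: the corollary asserts an action on the honest finite-dimensional space $W$. The missing step is an analytic convergence argument --- either for the image series in $\End(W)$ after applying $\varphi_n$ termwise to your completed holonomy, or, more simply, by switching to the paper's order and observing that $\varphi_n(\widehat{\SW}_2(n))$ is a smooth $\End(W)$-valued connection whose Igusa/Gugenheim iterated integrals converge absolutely, exactly as for classical parallel transport of the Knizhnik--Zamolodchikov and Alekseev--Torossian connections. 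With that replacement (or supplement) your argument covers all $d\ge 2$.
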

In the two-dimensional case, this action corresponds to the usual representations of braid groups on products of representations of quadratic Lie algebras. In future work, we plan to generalize this construction to cyclic $L_\infty$-algebras.
In fact, it seems plausible that this can be achieved directly on the level
of the \v{S}evera--Willwacher connections, which
would allow one to bypass the use of homological perturbation theory. Moreover, we expect the resulting construction to be closely related to Kontsevich's
characteristic classes of cyclic $L_\infty$-algebras from \cite{Feynman}.

\appendix

\section{The completed bar complex}\label{section:twisting_cochains}

\begin{remark}
Let $C$ be a differential graded coalgebra and $A$ a differential graded algebra.
Consider the space $\Hom(C,A)$ of graded linear maps from $C$ to $A$.
The differentials on $C$ and $A$ induce a differential on $\Hom(C,A)$.
Let us denote the comultiplication on $C$ by $\Delta$ and the multiplication on $A$ by $m$.
The convolution product
$$ (f,g) \mapsto m \circ (f\otimes g) \circ \Delta$$
makes
$\Hom(C,A)$ into a differential graded algebra.
\end{remark}

\begin{definition}
For $C$ and $A$ as above, a Maurer--Cartan element\footnote{For the definition of Maurer--Cartan elements of a differential graded algebra, see Remark~\ref{remark:MC_dga}.} of the differential graded algebra
$\Hom(C,A)$ is called a twisting cochain on $C$ with values in $A$.
We denote the set of twisting cochains on $C$ with values in $A$ by $\mathsf{MC}(\Hom(C,A))$.
\end{definition}

\begin{remark}
From now on, we assume that $A$ is an augmented differential graded algebra.
Under certain conditions, a twisting cochain $f$ on $C$ with values in the augmentation ideal $\underline{A}$ is equivalent to a morphism of differential graded coalgebras $F: C \to \B A$.
In fact, given a morphism $F$, one simply obtains a twisting cochain by composition with the projection $\B A \to \underline{A}$.
On the other hand, if we start with a twisting cochain, the natural candidate $\hat{f}$ for the morphism from $C$ to $\B A$
is
$$ \hat{f}(c) = \epsilon(c) + f(c) + f^{\otimes 2}(\Delta c) + f^{\otimes 3}(\Delta^2 c) + \cdots.$$
Here $\epsilon$ is the co-unit of $C$ and $\Delta^n: C\to C^{\otimes n+1}$ is defined iteratively by $\Delta^1=\Delta$ and $\Delta^{n+1} = (\id^{\otimes n}\otimes \Delta)\circ \Delta^n$.
It is not hard to check that, up to convergence issues, $\hat{f}$ is indeed a morphism of differential graded
coalgebras (for instance, if $\Delta^{N}v=0$ for sufficiently large $N$).
One can make this correspondence precise with the help of the completed bar complex.
\end{remark}

\begin{definition}
Let $(C,d,\Delta,\epsilon)$ be a differential graded coalgebra.
A filtration on $C$ is a decreasing sequence of subspaces
$$ F_0(C)=C \supseteq F_1(C)=\mathrm{ker}(\epsilon) \supseteq F_2(C) \supseteq F_3(C) \supseteq \dots,$$
such that:
\begin{enumerate}
\item $ \bigcap_{k} F_k(C) = 0$.
\item If $c \in F_k(C)$, $d(c)$ lies in $F_{k-1}(C)$, and $\Delta(c)$ lies in the linear span of the subspaces $F_i(C)\otimes F_{k-i}(C) \subset C\otimes C$, $i=0,\dots,k$.
\end{enumerate}

A filtered differential graded coalgebra is an differential graded coalgebra
with a filtration.
\end{definition}

\begin{remark}
Let $C$ be a filtered differential graded coalgebra.
We denote the completion of $C$ by $\hat{C}$.
If we equip $C\otimes C$ with the filtration $F_k(C\otimes C)$ given by
the linear span of the subspaces $F_i(C)\otimes F_{k-i}(C)$ for $i=0,\dots,k$,
then the comultiplication becomes continuous. Hence we obtain a map
$$ \hat{\Delta}: \hat{C} \to C\hat{\otimes}C.$$
The differential $d$ and the augmentation map $\epsilon$ extend to $\hat{C}$.
We denote these extensions by $\hat{d}$ and $\hat{\epsilon}$.

\end{remark}

\begin{definition}
In the above situation, we refer to $(\hat{C},\hat{d},\hat{\Delta},\hat{\epsilon})$ as the completion of $(C,d,\Delta,\epsilon)$.
\end{definition}

\begin{remark}
Although the completion $(\hat{C},\hat{d},\hat{\Delta},\hat{\epsilon})$ is \emph{not} a differential graded coalgebra,
since $\hat{\Delta}$ maps into $C\hat{\otimes} C$ instead of $\hat{C}\otimes \hat{C}$,
we will accept the axioms that $(\hat{C},\hat{d},\hat{\Delta},\hat{\epsilon})$ satisfies to be the adequate replacement of what a differential graded coalgebra should be in the completed context.
\end{remark}

\begin{definition}
Let $C$ and $C'$ be two filtered differential graded coalgebras.
A morphism of differential graded coalgebras from $C$ to the completion $\hat{C'}$ of $C'$ is a morphism of chain complexes
$ \varphi: C\to \hat{C'}$
that maps $F_kC$ into $F_k\hat{C'}$ and makes the following two diagrams commute:
\begin{enumerate}
\item
$
\xymatrix{
C \ar[rd]_{\epsilon} \ar[rr]^{\phi} && \hat{C'} \ar[ld]^{\hat{\epsilon'}}\\
& \mathbb{R}&
}
$
\item
$
\xymatrix{
C \ar[dd]^{\Delta} \ar[rr]^{\phi}&& \hat{C'} \ar[dd]_{\hat{\Delta}'} \\
&&  \\
C\otimes C \ar[r]^{\phi\otimes \phi} & \hat{C'}\otimes \hat{C'} \ar[r]& C'\hat{\otimes} C'.
}
$
\end{enumerate}
\end{definition}

\begin{lemma}
If $A$ is an augmented differential graded algebra, then its bar complex $\B A$ is naturally a
filtered differential graded coalgebra.
The filtration on $\B A$ is defined by $$F_k(\B A):= \bigoplus_{l \ge k} \T^{l}(\s \underline{A}).$$
Recall that $\underline{A}$ denotes the kernel of the augmentation map.
\end{lemma}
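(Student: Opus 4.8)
The plan is to check directly that $F_\bullet(\B A)$ satisfies the three requirements in the definition of a filtered differential graded coalgebra given above, together with the (purely formal) functoriality implicit in the word ``naturally''. First I would deal with the bookkeeping. Since $\B A=\T(\s\underline{A})=\bigoplus_{l\ge 0}\T^{l}(\s\underline{A})$, the subspace $F_0(\B A)$ is all of $\B A$; and because the co-unit of $\B A$ is the projection onto $\T^{0}(\s\underline{A})\cong\mathbb{R}$, its kernel is exactly $\bigoplus_{l\ge 1}\T^{l}(\s\underline{A})=F_1(\B A)$. The sequence $F_k(\B A)=\bigoplus_{l\ge k}\T^{l}(\s\underline{A})$ is visibly decreasing, and $\bigcap_k F_k(\B A)=0$ because every element of a direct sum has only finitely many nonzero homogeneous components (here: components of fixed tensor length), so an element lying in $\bigoplus_{l\ge k}\T^{l}(\s\underline{A})$ for all $k$ must vanish. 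This settles the boundary conditions $F_0(\B A)=\B A$, $F_1(\B A)=\ker\epsilon$, and condition (1).

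Next I would verify compatibility of the differential with the filtration. Inspecting the formula for $\delta$ in the definition of the bar functor, $\delta$ is a sum of two terms: the one built from $d_A$ preserves the tensor length $l$, and the one built from the product of $A$ sends $\T^{l}(\s\underline{A})$ to $\T^{l-1}(\s\underline{A})$. Hence $\delta$ maps $F_k(\B A)=\bigoplus_{l\ge k}\T^{l}(\s\underline{A})$ into $\bigoplus_{l\ge k-1}\T^{l}(\s\underline{A})=F_{k-1}(\B A)$, which is the first half of condition (2).

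The only part with any content is compatibility of the coproduct with the filtration, and I expect the small case distinction there to be the main (and rather mild) point. The comultiplication on the tensor coalgebra is deconcatenation, $\Delta(\s x_1\otimes\cdots\otimes\s x_l)=\sum_{j=0}^{l}(\s x_1\otimes\cdots\otimes\s x_j)\otimes(\s x_{j+1}\otimes\cdots\otimes\s x_l)$, so the $j$-th summand lies in $\T^{j}(\s\underline{A})\otimes\T^{l-j}(\s\underline{A})\subseteq F_j(\B A)\otimes F_{l-j}(\B A)$. Given a homogeneous element of $F_k(\B A)$, i.e.\ $l\ge k$, I would argue: if $j\le k$ then $l-j\ge k-j\ge 0$, so $F_{l-j}(\B A)\subseteq F_{k-j}(\B A)$ and the summand lies in $F_j(\B A)\otimes F_{k-j}(\B A)$ with $0\le j\le k$; if $j\ge k$ then $F_j(\B A)\subseteq F_k(\B A)$ and the summand lies in $F_k(\B A)\otimes F_0(\B A)$. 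In either case the term sits in one of the subspaces $F_i(\B A)\otimes F_{k-i}(\B A)$, $i=0,\dots,k$, so $\Delta\bigl(F_k(\B A)\bigr)$ lies in their linear span.

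Finally, for ``naturally'' I would observe that for a morphism $f\colon A\to A'$ of augmented differential graded algebras the induced map $\B f=\T(\s f)$ preserves tensor length, hence maps $F_k(\B A)$ into $F_k(\B A')$; thus $A\mapsto(\B A,F_\bullet)$ is a functor into filtered differential graded coalgebras. I do not anticipate any genuine obstacle: the entire argument is an unwinding of definitions, and the only spot requiring a moment's care is the case split in the coproduct computation above.
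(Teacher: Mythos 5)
Your verification is correct, and it is exactly the routine check the paper has in mind: the paper states this lemma without proof, treating it as an immediate consequence of the definitions in the appendix. Your case analysis for the deconcatenation coproduct and the observation that the bar differential splits into a length-preserving part (from $d_A$) and a length-decreasing part (from the product) are precisely the points that make the omitted verification go through, and the naturality remark via $\B f=\T(\s f)$ is also fine.
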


\begin{definition}
Let $A$ be an augmented differential graded algebra.
The completed bar complex of $A$ is the completion of the filtered
differential graded coalgebra $\B A$. We denote it by $\hat{\B} A$.
\end{definition}

\begin{proposition}
Let $C$ be a differential graded coalgebra and $A$ an augmented
differential graded algebra.
The following maps define a bijection between the set $\mathsf{MC}(\Hom(C,\underline{A}))$ of twisting cochains
on $C$ with values in $\underline{A}$ and the set of morphisms from $C$ to the completed bar complex $\hat{\B}A$ of $A$:
\begin{enumerate}
\item Given a morphism $F: C \to \hat{\B}A$, the associated morphism $\check{F}: C \to \underline{A}$
is just the composition of $F$ with the projection $\hat{\B} A \to \underline{A}$.
\item Given $f: C \to \underline{A}$ a twisting cochain on $C$ with values in $\underline{A}$, we define $\hat{f}: C \to \hat{\B} A$ to be
    $$\hat{f}(c):= \epsilon(c) + f(c) + f^{\otimes 2}(\Delta c) + f^{\otimes 3}(\Delta^2 c) + \cdots.$$
\end{enumerate}
\end{proposition}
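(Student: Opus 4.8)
The plan is to run the standard twisting-cochain/cofreeness dictionary, taking care that every infinite sum that appears is a legitimate element of the completion. Throughout, let $\mathrm{pr}\colon \hat{\B}A \to \underline{A}$ be the projection onto the length-one tensors $\T^1(\s\underline{A})\cong \s\underline{A}$ composed with desuspension, so that the map in (1) is $\check{F}=\mathrm{pr}\circ F$, and write the series in (2) as $\hat{f}(c)=\sum_{n\ge 0} f^{\otimes n}(\Delta^{n-1}c)$ with the conventions $f^{\otimes 0}(\Delta^{-1}c):=\epsilon(c)\cdot 1$ and $f^{\otimes 1}(\Delta^{0}c):=f(c)$. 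First I would check that $\hat{f}$ is a well-defined morphism of completed differential graded coalgebras. Since $\Delta_C$ lands in $C\otimes C$, each $f^{\otimes n}(\Delta^{n-1}c)$ is a finite sum inside $\T^{n}(\s\underline{A})$, so the series converges in $\hat{\B}A=\varprojlim \B A/F_k(\B A)$ and $\hat{f}$ sends $\ker(\epsilon)$ into $F_1(\hat{\B}A)$. The counit triangle commutes because $\hat{\epsilon}\circ\hat{f}$ only sees the $n=0$ term; the comultiplication square follows from the identity
\[
\hat{\Delta}\big(f^{\otimes n}(\Delta^{n-1}c)\big)=\sum_{p+q=n}\big(f^{\otimes p}\otimes f^{\otimes q}\big)\big((\Delta^{p-1}\otimes\Delta^{q-1})\Delta c\big),
\]
where $\hat{\Delta}$ is the continuous extension of deconcatenation, together with coassociativity of $\Delta_C$; summing over $n$ collapses the right-hand side to $(\hat{f}\otimes\hat{f})(\Delta c)$ in $\B A\hat{\otimes}\B A$.

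Next I would pin down when $\hat{f}$ is compatible with differentials. The differential on $\hat{\B}A$ is the continuous extension of $\delta=\delta_1+\delta_2$, with $\delta_1$ induced by $d_A$ and $\delta_2$ by the multiplication of $A$. Both $\delta\circ\hat{f}$ and $\hat{f}\circ d_C$ are, up to a coderivation correction, coalgebra-compatible, hence determined by their length-one components via the reconstruction argument below, so $\delta\circ\hat{f}=\hat{f}\circ d_C$ holds iff $\mathrm{pr}\circ(\delta\circ\hat{f}-\hat{f}\circ d_C)=0$. A direct computation identifies this length-one component with $d_A f(c)\pm f(d_C c)+m\circ(f\otimes f)\circ\Delta c$, i.e.\ with the defining expression for a Maurer--Cartan element of the convolution differential graded algebra $\Hom(C,\underline{A})$. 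Hence $f\mapsto\hat{f}$ carries twisting cochains to morphisms $C\to\hat{\B}A$, and the very same computation read in reverse shows that $\check{F}=\mathrm{pr}\circ F$ is a twisting cochain whenever $F$ is such a morphism.

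Finally I would check that the two assignments are mutually inverse. That $\check{(\hat{f})}=f$ is immediate, since $\mathrm{pr}$ extracts the $n=1$ term. For $\widehat{(\check{F})}=F$ I would prove the reconstruction lemma: a morphism $F\colon C\to\hat{\B}A$ is recovered from $\check{F}$. Writing $F(c)=\sum_{l\ge 0}F_l(c)$ with $F_l(c)\in\T^l(\s\underline{A})$ and $F_1=\check{F}$, an induction on $l$ using the comultiplication square gives $F_l=\check{F}^{\otimes l}\circ\Delta^{l-1}$: the $\T^1\otimes\T^{l-1}$-component of $\hat{\Delta}(F(c))$ equals, by compatibility, the corresponding component of $(F\otimes F)(\Delta c)$, namely $(\check{F}\otimes F_{l-1})(\Delta c)$, while deconcatenation identifies that same component with the map sending $c$ to the first letter of $F_l(c)$ tensored with the remaining $l-1$ letters; this forces $F_l=(\check{F}\otimes F_{l-1})\circ\Delta$, hence $F_l=\check{F}^{\otimes l}\circ\Delta^{l-1}$, and summing over $l$ yields $F=\widehat{\check{F}}$.

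The hard part is the ``cofreeness'' input underlying the reconstruction lemma: the cotensor coalgebra $\T^c(\s\underline{A})$ is cofree only among (locally) conilpotent coalgebras, whereas $C$ --- which will be $C_\bullet(M)$ in the applications --- is not of this type, and this is precisely why $\B A$ must be replaced by its completion. Making ``cofreeness in the completed context'' rigorous amounts to verifying that $\hat{\Delta}$, the differential $\hat{\delta}$, and the morphism conditions are all continuous for the filtration $F_k(\B A)=\bigoplus_{l\ge k}\T^l(\s\underline{A})$, so that each infinite sum above genuinely converges in the completion and the reconstruction formula $F=\sum_n\check{F}^{\otimes n}\circ\Delta^{n-1}$ reproduces $F$. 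With this bookkeeping in hand, the three steps above are the usual twisting-cochain argument.
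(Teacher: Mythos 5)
Your proposal is correct and takes essentially the same route as the paper: the paper dispatches well-definedness of the two assignments and the identity $\check{(\hat{f})}=f$ as routine, and its only detailed step is the uniqueness lemma --- a morphism $F\colon C\to\hat{\B}A$ is determined by $\check{F}$, proved by induction on tensor length using compatibility with $\hat{\Delta}$ --- which is exactly your reconstruction formula $F_l=\check{F}^{\otimes l}\circ\Delta^{l-1}$. You merely spell out (convergence in the completion, the coderivation-along-$\hat{f}$ argument for compatibility with differentials) what the paper leaves implicit.
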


\begin{proof}
Observe first that the map $\hat{f}$ associated to $f: C \to \underline{A}$
is well defined as a map from $C$ to the completed bar complex of $A$.
It is not hard to check that the given assignments map twisting cochains to coalgebra maps and vice versa
and that the twisting cochain associated to the morphism $\hat{f}$ is $f$ itself.

It remains to check that the morphism associated to the twisting cochain $\check{F}$ is $F$ itself.
This is a consequence of the following lemma:
\end{proof}

\begin{lemma}
Let $F$, $G: C\to \hat{\B}A$ be morphisms such that $\check{F} = \check{G}$. Then $F = G$.
\end{lemma}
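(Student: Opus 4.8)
The plan is to deduce the lemma from the (completed) \emph{cofreeness} of the tensor coalgebra underlying $\B A$: a morphism of differential graded coalgebras into $\hat{\B}A=\prod_{l\ge 0}\T^l(\s\underline{A})$ is completely determined by its corestriction to the cogenerators $\s\underline{A}$, which up to the suspension isomorphism is exactly the twisting cochain $\check F$. Concretely, I would write $F=(F^{(l)})_{l\ge 0}$ and $G=(G^{(l)})_{l\ge 0}$, where $F^{(l)},G^{(l)}\colon C\to \T^l(\s\underline{A})$ are the components (this decomposition makes sense precisely because $\hat{\B}A$ is the product of the $\T^l(\s\underline{A})$). The counit axiom forces $F^{(0)}=\hat\epsilon|_C=G^{(0)}$, and by construction $F^{(1)}$ and $G^{(1)}$ are both $\check F=\check G$ up to the suspension isomorphism, so the whole content is to handle the components with $l\ge 2$.

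For this, let $\pi_1\colon \hat{\B}A\to \s\underline{A}$ be the projection onto the length‑one part, so that $\pi_1\circ F$ is $\check F$ up to suspension. Iterating the comultiplication compatibility of $F$ (diagram (2) in the definition of a morphism to a completed coalgebra), together with coassociativity, gives
\[\hat\Delta^{\,l-1}\circ F \;=\; F^{\otimes l}\circ \Delta^{l-1},\]
where $\Delta^{l-1}\colon C\to C^{\otimes l}$ is the iterated coproduct and $\hat\Delta^{\,l-1}$ its analogue on $\hat{\B}A$. Applying $\pi_1^{\otimes l}$ to both sides and using that, for the deconcatenation coproduct, $\pi_1^{\otimes l}\circ \hat\Delta^{\,l-1}$ annihilates every word of length $\ne l$ and restricts on words of length $l$ to the canonical isomorphism $\T^l(\s\underline{A})\xrightarrow{\ \sim\ }(\s\underline{A})^{\otimes l}$, the left‑hand side becomes $F^{(l)}$ (viewed as a map $C\to(\s\underline{A})^{\otimes l}$) while the right‑hand side becomes $(\pi_1\circ F)^{\otimes l}\circ\Delta^{l-1}$. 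Hence
\[F^{(l)} \;=\; (\pi_1\circ F)^{\otimes l}\circ \Delta^{l-1},\]
which depends only on $\check F$; the identical computation gives $G^{(l)}=(\pi_1\circ G)^{\otimes l}\circ\Delta^{l-1}$. Since $\check F=\check G$, all components of $F$ and $G$ agree, and therefore $F=G$.

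The only points needing care are routine bookkeeping: that the decomposition $F=(F^{(l)})_l$ is legitimate and compatible with the filtrations (so that $\hat\Delta^{\,l-1}\circ F$ lands where claimed and all the tensor expressions converge in the completed bar complex), and the elementary identity $\pi_1^{\otimes l}\circ\hat\Delta^{\,l-1}=\mathrm{pr}_{\T^l}$ for the deconcatenation coproduct, which one checks by a direct computation on a homogeneous word. I do not expect any genuine obstacle here: the statement is exactly the universal (cofreeness) property of the tensor coalgebra transported to the filtered/completed setting, and it is the same mechanism that makes the assignment $f\mapsto \hat f$ in the preceding proposition well defined and inverse to $F\mapsto \check F$.
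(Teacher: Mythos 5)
Your argument is correct, and it is essentially the cofreeness property of the tensor coalgebra that the paper also exploits, but you execute it differently. The paper proceeds by induction on word length: it notes $F(c)^0=\epsilon(c)$, $F(c)^1=\check{F}(c)$, and then uses the compatibility $\Delta(F(c))=(F\otimes F)(\Delta c)$ together with the injectivity of the reduced coproduct $\overline{\Delta}$ on $\overline{\B A}$ to conclude that each component $F(c)^{k+1}$ is \emph{determined} by the lower ones, deferring the details to Quillen's Appendix~B. You instead apply $\pi_1^{\otimes l}$ to the $l$-fold iterated compatibility relation and use the elementary identity $\pi_1^{\otimes l}\circ\hat{\Delta}^{\,l-1}=\mathrm{pr}_{\T^l(\s\underline{A})}$ for the deconcatenation coproduct, obtaining the closed formula $F^{(l)}=\check{F}^{\otimes l}\circ\Delta^{l-1}$ (up to suspension), which immediately gives $F=G$. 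This buys slightly more than the paper's argument: it reconstructs $F$ explicitly as $\widehat{\check{F}}$, so it simultaneously re-proves the surjectivity half of the preceding proposition and avoids the external reference; the paper's induction is shorter but less explicit. The bookkeeping points you flag --- that the component decomposition is legitimate because $\hat{\B}A=\prod_{l\ge 0}\T^l(\s\underline{A})$, and that iterating $\hat{\Delta}$ in the completed setting is harmless since postcomposition with $\pi_1^{\otimes l}$ kills all but finitely many filtration levels (alternatively, one can interleave the projections with the coproducts) --- are indeed routine and do not hide a gap.
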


\begin{proof}
We denote the projection of an element $x \in \hat{\B} A$ to $\T^{k}\underline{A}$ by $x^k$.

By definition, we have
$$ F(c)^{0} = \epsilon(c) = G(c)^0 \qquad \textrm{and} \qquad F(c)^1=\check{F}(c)=\check{G}(c)=G(c)^{1}.$$
Now one can proceed by induction: Let $k\ge 1$ and suppose that $F(c)^{l} = G(c)^{l}$ holds for
all $l\le k$.
Since $\Delta(F(c)) = (F\otimes F)(\Delta c)$ and since $\overline{\Delta}: \overline{\B A} \to \overline{\B A} \otimes \overline{\B A}$ is injective,
we know that $F(c)^{k+1}$ is determined by $F(c)^{l}$ for $l\le k$.
Details can be found in \cite[Appendix B]{Quillen}.
\end{proof}

\thebibliography{10}
\bibitem{AT}
A. Alekseev and C. Torossian, {\em Kontsevich deformation quantization and
flat connections}, Comm. Math. Phys. \textbf{300} (2010), no. 1, 47--64.

\bibitem{Rossi}
J. Alm and C. Rossi,
{\em The universal enveloping algebra of an  $L_\infty$-algebra via deformation quantization}, in preparation.

\bibitem{AS}
C. Arias Abad and F. Sch\"atz,
{\em The $\mathsf{A}_\infty$ de Rham theorem and the integration of representations up to homotopy}, Int. Math. Res. Not. {\bf 16} (2013),
3790--3855.

\bibitem{AS2}
C. Arias Abad and F. Sch\"atz,
{\em Holonomies of the \v{S}evera-Willwacher connection}, in preparation.

\bibitem{Arnold}
V.I. Arnold, {\em The cohomology ring of the group of dyed braids}, Mat. Zametki {\bf 5} (1969), 227--231.

\bibitem{B}
V. Baranovsky, {\em A universal enveloping for $L_\infty$-algebras}, Math. Res. Lett. {\bf 15} (2008).

\bibitem{BL}
H.J. Baues, and J.M. Lemaire, {\em Minimal models in homotopy theory}, Math. Ann. {\bf 225} (1977), 219--242.

\bibitem{BS}
J. Block and A. Smith, {\em The higher Riemann-Hilbert correspondence}, Adv. Math. {\bf 252} (2014), 382--405.

\bibitem{Chen}
K.T. Chen,
{\em Iterated path integrals}, Bull. Amer. Math. Soc. {\bf 83} (1977), 831--879.

\bibitem{Cohen}
F. Cohen, {\em The homology of $C_{n+1}$-spaces, $n\ge 0$}, The homology of iterated loop spaces, Springer-Verlag,
Berlin (1976), Lect. Notes in Math. \textbf{533}, 207--351.

\bibitem{F}
Y. F\'elix, S. Halperin and J-C. Thomas, {\em Rational homotopy theory},
Graduate Texts in Mathematics {\bf 205}, Springer-Verlag, New York, 2001.

\bibitem{Getzler}
E. Getzler, {\em Lie theory for nilpotent $L_\infty$-algebras}, Ann.\ of Math.\ (2) {\bf 170} (2009), 271--301.

\bibitem{G}
V.K.A.M. Gugenheim,
{\em On Chen's iterated integrals},
Illinois J. Math. \textbf{21} (1977), no. 3, 703--715.

\bibitem{real_homotopy_type}
R. Hardt, P. Lambrechts, V. Turchin and I. Voli\'c,
{\em Real homotopy theory of semi-algebraic sets},
Algebr. Geom. Topol. {\bf 11} (2011), 2477--2545.

\bibitem{Hinich}
V. Hinich, {\em DG coalgebras as formal stacks}, J. Pure Appl. Algebra \textbf{162} (2001), 209--250.

\bibitem{I}
K. Igusa,
{\em Iterated integrals of superconnections}, arXiv:0912.0249.

\bibitem{KZ}
V.G. Knizhnik, and A.B. Zamolodchikov, 
{\em Current algebra and Wess Zumino model in two dimensions}, Nucl. Phys.
B \textbf{247} (1984), 83--103.

\bibitem{Feynman}
M. Kontsevich, {\em Feynman diagrams and low-dimensional topology}, in First European Congress of
Mathematics, Vol. II (Paris, 1992), volume 120 of Progr. Math., Birkh\"auser, Basel (1994), 97--121.

\bibitem{K}
M. Kontsevich,
{\em Operads and motives in deformation quantization}, Lett. Math. Phys.
\textbf{48} (1999), 32--72.

\bibitem{LM}
T. Lada and M. Markl, {\em Strongly homotopy Lie algebras}, Comm. Algebra \textbf{23} (1995), 2147--2161.

\bibitem{Lambrechts}
P. Lambrechts and I. Voli\'{c},
{\em Formality of the little $N$-discs operad}, Mem. Amer. Math. Soc. {\bf 230}, no. 1079, forthcoming.

\bibitem{Majewski}
M. Majewski, {\em A proof of the Baues-Lemaire conjecture in rational homotopy theory},
Proceedings of the Winter School ``Geometry and Physics'' (Srn\'i, 1991), Rend. Circ. Mat. Palermo (2) suppl.\ no.\ {\bf 30} (1993), 113--123.

\bibitem{Majewski_book}
M. Majewski, {\em Rational homotopical models and uniqueness}, Mem. Amer. Math. Soc. \textbf{143} (2000), no. 682.

\bibitem{Picken}
J. Faria Martins and R. Picken,
{\em Surface Holonomy for Non-Abelian 2-Bundles via Double Groupoids}, Adv. Math.  \textbf{226} (2011), 3309--3366.

\bibitem{Quillen}
D. Quillen, {\em Rational homotopy theory}, Ann.\ of Math. (2) \textbf{90} (1969), 205--295.

\bibitem{SatiSchreiberStasheff}
H. Sati, U. Schreiber, and J. Stasheff,
{\em $L-\infty$-algebra connections and applications to String- and Chern-Simons $n$-transport}, in Quantum Field Theory, Birkh\"auser (2009), 303--424.

\bibitem{SchreiberWaldorf}
U. Schreiber and K. Waldorf,
{\em Connections on non-abelian Gerbes and their Holonomy}, Theory Appl. Categ. {\bf 28} (2013), 476--540.

\bibitem{SW}
P. \v{S}evera and T. Willwacher, {\em Equivalence of formalities of the
little disks operad},  Duke Math. J. \textbf{160} (2011), 175--206.

\bibitem{Sinha}
D. Sinha, {\em Manifold-theoretic compactifications of configuration spaces},
Selecta Mathematica (N.S.) {\bf 10} (2004), 391--428.

\bibitem{Sullivan}
D. Sullivan, {\em Infinitesimal computations in topology}, Inst. Hautes \'Etudes Sci. Publ. Math. {\bf 47} (1977), 269--331.

\bibitem{T}
D. Tamarkin, {\em Formality of chain operad of little disks}, Lett. Math.
Phys. \textbf{66} (2003), 65--72.

\bibitem{Tradleral}
T. Tradler, S. Wilson, and M. Zeinalian,
{\em Equivariant holonomy for bundles and abelian gerbes}, Comm. Math. Phys. {\bf 315} (2012), 38--108.

\bibitem{W}
T. Willwacher, {\em M. Kontsevich's graph complex and the Grothendieck-Teichm\"uller Lie algebra}, arXiv:1009.1654.

\bibitem{Yekutieli}
A. Yekutieli, {\em Nonabelian Multiplicative Integration on Surfaces},
arXiv: 1007.1250.

\end{document}